\date{}
\newtheorem{theorem}{Theorem}[section]
\newtheorem{lemma}[theorem]{Lemma}
\newtheorem{proposition}[theorem]{Proposition}
\theoremstyle{definition}
\newtheorem{remark}[theorem]{\bf Remark}
\begin{document}
\title{\bf
{Existence and asymptotical behavior of normalized solutions to focusing biharmonic HLS upper critical Hartree equation with a local perturbation}}
\author{{Jianlun Liu$^1$\footnote{E-mail address: jianlunliumath@163.com (J. Liu)},\ \ Hong-Rui Sun$^1$\footnote{E-mail address: hrsun@lzu.edu.cn (H.R. Sun)}\ \ \mbox{and}\ \ Ziheng Zhang$^2$\footnote{Corresponding author. E-mail address: zhzh@mail.bnu.edu.cn (Z. Zhang)}}\\
{\small \emph{$^1$School of Mathematics and Statistics,\  Lanzhou University,\ Lanzhou {\rm730000},}}\\
{\small \emph{People's Republic of China}}\\
{\small \emph{$^2$School of Mathematical Sciences,\  Tiangong University,\ Tianjin {\rm300387},}}\\
{\small \emph{People's Republic of China}}\\
}
\maketitle
\baselineskip 17pt

{\bf Abstract}: This paper is concerned with the following focusing biharmonic HLS upper critical Hartree equation with a local perturbation
$$
\begin{cases}
	{\Delta}^2u-\lambda u-\mu|u|^{p-2}u-(I_\alpha*|u|^{4^*_\alpha})|u|^{4^*_\alpha-2}u=0\ \ \mbox{in}\ \mathbb{R}^N, \\[0.1cm]
	\int_{\mathbb{R}^N} u^2 dx = c,
\end{cases}		
$$
where $0<\alpha<N$, $N \geq 5$, $\mu,c>0$, $2+\frac{8}{N}=:\bar{p}\leq p<4^*:=\frac{2N}{N-4}$, $4^*_\alpha:=\frac{N+\alpha}{N-4}$, $\lambda \in \mathbb{R}$ is a Lagrange multiplier and $I_\alpha$ is the Riesz potential. Choosing an appropriate testing function, one can derive some reasonable estimate on the mountain pass level. Based on this point, we show the existence of normalized solutions by verifying the \emph{(PS)} condition at the corresponding mountain pass level for any $\mu>0$.  The contribution of this paper is that the recent results obtained for $L^2$-subcritical perturbation by Chen et al. (J. Geom. Anal. 33, 371 (2023)) is extended to the case $\bar{p}\leq p<4^*$. Moreover, we also discuss asymptotic behavior of the energy to the mountain pass solution when $\mu\to 0^+$ and $c\to 0^+$, respectively.

\textbf{Key words and phrases.} Biharmonic Hartree equation; Normalized solutions; HLS upper critical; Asymptotic behavior.

\textbf{2020 Mathematics Subject Classification}. 35A15, 35J30, 35J35, 35J60.\\

\section{Introduction and main results}
In this paper, we investigate the following focusing biharmonic Hartree equation with a local perturbation
\begin{equation}\label{eqn:BS-equation-L2-Super+Critical}
\begin{cases}
	{\Delta}^2u-\lambda u-\mu |u|^{p-2}u-(I_\alpha*|u|^{4^*_\alpha})|u|^{4^*_\alpha-2}u=0  \ \ \mbox{in}\ \mathbb{R}^N, \\[0.1cm]
	\int_{\mathbb{R}^N} u^2 dx = c,  \\[0.1cm]
\end{cases}
\end{equation}
where $0<\alpha<N$, $N\geq 5$, $\mu,c>0$, $*$ is the convolution product on $\mathbb R^N$, $\lambda$ appears as a Lagrange multiplier, $2+\frac{8}{N}=:\bar{p}\leq p<4^*:=\frac{2N}{N-4}$ and $4^*_\alpha:=\frac{N+\alpha}{N-4}$. In the sense of Hardy-Littlewood-Sobolev (abbreviated as HLS) inequality, we refer to $4^*_\alpha$ as the upper critical exponent. Meanwhile, for every $x\in\mathbb R^N\setminus\{0\}$, the Riesz potential $I_\alpha$ is of the following expression
\begin{equation}\label{eqn:Defn-I-A-alpha}
I_\alpha(x):=\frac{A_\alpha}{|x|^{N-\alpha}},
\end{equation}
where $A_\alpha:=\frac{\Gamma(\frac{N-\alpha}{2})}{\Gamma(\frac{\alpha}{2})\pi^\frac{N}{2}2^\alpha}$ and $\Gamma$ is the Gamma function which can be found in
\cite[Page:19]{Riesz1949}.

As we know, the interest in studying problem (\ref{eqn:BS-equation-L2-Super+Critical}) comes from searching for standing waves with the form $\psi(t,x)=e^{-i\lambda t}u(x)$ to the following time-dependent biharmonic nonlinear elliptic equation
\begin{equation}\label{eqn:intial-equation}
-i\partial_{t}\psi-\gamma\Delta^2 \psi+\mu |\psi|^{p-2}\psi+\kappa(I_\alpha*|\psi|^{q})|\psi|^{q-2}\psi=0\ \ \mbox{in}\ \mathbb{R}\times\mathbb{R}^N,
\end{equation}
where $\gamma,\mu>0$, $\kappa=\pm1$, $t$ denotes the time, $\psi: \mathbb R^N \times \mathbb R\to \mathbb C$ is a complex valued function, $i$ denotes the imaginary unit, $\Delta^2$ is the biharmonic operator was proposed in \cite{Karpman1996,Karpman-Shagalov2000}, $\frac{N+\alpha}{N}\leq q \leq 4^*_\alpha$ and $2<p\leq 4^*$. From the physical standpoint, the real number $\kappa$ refers to the defocusing versus focusing regime, that is, $\kappa=-1$ corresponds to the defocusing problem, while $\kappa=1$ stands for the focusing problem. For more details, see \cite{Fibich-Ilan-Papanicolaou2002,Fibich-Ilan-Schochet2003,Gao-Wang2014,Kenig-Merle2008,Pausader2007} and the references mentioned there. By recalling (\ref{eqn:intial-equation}), we see that the nonlinear term $\kappa(I_\alpha*|\psi|^{q})|\psi|^{q-2}\psi$ is called the Hartree type nonlinearity, which is relevant to describing several physical phenomena: the dynamics of the mean-field limits of many-body quantum systems such as coherent states and condensates, the quantum transport in semiconductors superlattices, the study of mesoscopic structures in Chemistry \cite{Elgart-Schlein2007,Lieb-Yau1987,Yu-Gaididei-Rasmussen-Christiansen1995}. For more backgrounds, we refer the reader to  \cite{Gao2013,Miao-Xu-Zhao2008,Yonggeun-Gyeongha-Soonsik-Sanghyuk2015} and the references listed therein. In recent years, some researchers begun to pay their attention to the biharmonic Hartree equation (that is, $\mu=0$ in (\ref{eqn:intial-equation})). Explicitly, Banquet et al. \cite{Banquet-Villamizar-Roa2020} proved some local and global well-posedness results in $H^s$ for the Cauchy problem associated to the biharmonic Hartree equation with variable dispersion coefficients. Later, Saanouni \cite{Saanouni2020} studied a dichotomy of scattering of global solutions versus finite time blow-up in the inter-critical regime and Cao et al. explored the stationary case in \cite{Cao-Dai2019}.

If $\mu\neq0$ and $\kappa=0$, equation (\ref{eqn:intial-equation}) reduces to the following so-called biharmonic Schr\"{o}dinger equation
 \begin{equation}\label{eqn:intial-equation-BiNLS}
-i\partial_{t}\psi-\gamma\Delta^2 \psi+\mu |\psi|^{p-2}\psi=0\ \ \mbox{in}\ \mathbb{R}\times\mathbb{R}^N,
\end{equation}
which was considered in \cite{Ivanov-Kosevich1983,Turitsyn1985} to study the stability of solitons in magnetic materials once the effective quasi particle mass becomes infinite. For some recent studies on (\ref{eqn:intial-equation-BiNLS}), one can see \cite{Miao-Xu-Zhao2009,Pausader2009,Zhang-Zheng2010} and the references listed within. During the process of research, many people focus on finding standing waves solutions to (\ref{eqn:intial-equation-BiNLS}) with a prescribed mass, since the mass may have specific meanings, such as the total number of atoms in Bose-Einstein condensation, or
the power supply in nonlinear optics. These solutions are commonly called normalized solutions, which provide valuable insights into dynamical properties of stationary solutions, such as the stability or instability of orbits. For this situation, the interested authors can refer to \cite{Bellazzini-Visciglia2010,Phan2018,Ma-Chang-2022}.
If $\mu\neq0$ and $\kappa=1$ in (\ref{eqn:intial-equation}), we obtain a biharmonic Hartree equation with one local perturbation. To the best our of knowledge, based on the conservation of mass theory, Chen and Chen in \cite{Chen-chen2023} paid their attention to standing waves solutions of (\ref{eqn:intial-equation}) when $2<p<\bar{p}:=2+\frac{8}{N}$ (called $L^2$-subcritical exponent), that is, solutions of problem (\ref{eqn:BS-equation-L2-Super+Critical}).
To obtain this type (weak) solutions for problem (\ref{eqn:BS-equation-L2-Super+Critical}), it is equivalent to search for critical points of the $C^1$ functional $I:H^2(\mathbb R^N)\to\mathbb R$
\begin{equation}\label{eqn:Defn-Iu}
I(u)=\frac{1}{2}\int_{\mathbb{R}^N}|\Delta u|^2 dx-\frac{\mu}{p}\int_{\mathbb{R}^N}|u|^p dx-\frac{1}{24^*_\alpha}\int_{\mathbb{R}^N}(I_\alpha*|u|^{4^*_\alpha})|u|^{4^*_\alpha} dx,\ \forall u\in H^2({\mathbb{R}^N})
\end{equation}
restricted to the constraint
\begin{equation}\label{eqn:Sc}
S(c):=\Bigl\{u\in H^2(\mathbb{R}^N):\int_{\mathbb{R}^N}|u|^2dx=c\Bigr\}.
\end{equation}
However, it is clear that $I$ defined in (\ref{eqn:Defn-Iu}) is unbounded from below on $S(c)$. To overcome this difficulty, the following Poho\v{z}aev manifold
$$
\mathcal P(c):=\{u\in S(c):P(u)=0\}
$$
is necessary, where
\begin{equation}\label{eqn:Pohozaev-identify}
P(u):=\int_{\mathbb{R}^N}|\Delta u|^2 dx-\mu\gamma_p\int_{\mathbb{R}^N}|u|^p dx-\int_{\mathbb{R}^N}(I_\alpha*|u|^{4^*_\alpha})|u|^{4^*_\alpha} dx.
\end{equation}
Noting that any critical point of $I|_{S(c)}$ stays in $\mathcal P(u)$ (see \cite[Page 4]{Chen-chen2023}) and the fact that this Poho\v{z}aev manifold is a natural constraint, the authors in \cite{Chen-chen2023} proved that problem (\ref{eqn:BS-equation-L2-Super+Critical}) with $2<p<\bar{p}$ possesses a normalized ground state solution and a mountain pass type solution, see its Theorems 1.1 and 1.2. Moreover, they also discussed asymptotic behavior of the mountain pass type solution as $\mu\to 0^+$ and $c\to 0^+$ in their Theorem 1.3. Obviously, the results presented in \cite{Chen-chen2023} are not involved in the $L^2$-critical perturbation or the $L^2$-supercritical perturbation, that is, $p=\bar{p}$ or $\bar{p}<p<4^*$ in problem (\ref{eqn:BS-equation-L2-Super+Critical}). Therefore, a natural issue is to investigate the existence of solutions for problem (\ref{eqn:BS-equation-L2-Super+Critical}) when $\bar{p}\leq p<4^*$.

At this moment, we can state our main result as follows.
\begin{theorem}\label{Thm:normalized-bScritical-solutions-L2super+L2critical}
%For any $\mu,c>0$ and $\bar{p}\leq p<4^*$. In particular, we futher assume that $c\in(0,c_0)$ when $p=\bar{p}$.
Suppose that
$$
\begin{cases}
\mu,c>0,\ \frac{22}{3}<p<10, & \mbox{if}\,\,  N=5;\\
\mu,c>0, \ \bar{p}<p<4^*, & \mbox{if}\,\,  N\geq 6; \\
\mu>0, c\in(0,c_0), \ \bar{p}=p, & \mbox{if}\,\,  N\geq 6, \\
\end{cases}
$$
then problem (\ref{eqn:BS-equation-L2-Super+Critical}) possesses a nonnegative solution $u_{\mu,c}$ satisfies $0<I(u_{\mu,c})<\frac{4+\alpha}{2(N+\alpha)}S^\frac{N+\alpha}{4+\alpha}_\alpha$, where $S_\alpha$ and $c_0$ are given in (\ref{eqn:S-alpha-define}) and (\ref{eqn:defn-c0}), respectively. Meanwhile, the corresponding Lagrange multiplier $\lambda_{\mu,c}<0$. Moreover, up to a subsequence, we have asymptotic behavior
\begin{equation}\label{eqn:lim-mc-c0}
\lim_{\mu\to 0^+}I(u_{\mu,c})=\lim_{c\to 0^+}I(u_{\mu,c})= \frac{4+\alpha}{2(N+\alpha)}S^\frac{N+\alpha}{4+\alpha}_\alpha.
\end{equation}
\end{theorem}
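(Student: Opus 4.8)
The plan is to realize $u_{\mu,c}$ as a mountain pass critical point of the energy $I$ of (\ref{eqn:Defn-Iu}) on the sphere $S(c)$, built over the Poho\v{z}aev constraint $\mathcal P(c)=\{u\in S(c):P(u)=0\}$, and to show that the resulting minimax level lies strictly below the threshold $\sigma:=\frac{4+\alpha}{2(N+\alpha)}S_\alpha^{\frac{N+\alpha}{4+\alpha}}$, which is precisely where compactness is lost.

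\emph{Step 1 (geometry and the level).} For $u\in S(c)$ and $t>0$ put $u_t(x):=t^{N/2}u(tx)\in S(c)$ and
\[
\psi_u(t):=I(u_t)=\frac{t^{4}}{2}\|\Delta u\|_2^2-\frac{\mu\,t^{N(p-2)/2}}{p}\|u\|_p^p-\frac{t^{\,4(N+\alpha)/(N-4)}}{2\cdot 4^*_\alpha}\int_{\mathbb R^N}(I_\alpha*|u|^{4^*_\alpha})|u|^{4^*_\alpha}\,dx .
\]
Since $P(u)=\tfrac12\psi_u'(1)$, $u\in\mathcal P(c)$ iff $t=1$ is a critical point of $\psi_u$. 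As $N(p-2)/2\ge4$ (with equality exactly for $p=\bar p$) and $4(N+\alpha)/(N-4)>4$, under the hypotheses of the theorem — for $p=\bar p$ one uses $c<c_0$, see (\ref{eqn:defn-c0}), to make $\tfrac12\|\Delta u\|_2^2-\tfrac\mu p\|u\|_p^p>0$ on $S(c)$ via the biharmonic Gagliardo--Nirenberg inequality — the map $\psi_u$ is positive and increasing near $0$, has a unique critical point $t(u)$ which is a strict global maximum, and tends to $-\infty$ as $t\to\infty$. Thus $I|_{S(c)}$ has a mountain pass geometry, $\mathcal P(c)$ is a natural constraint, and $m(\mu,c):=\inf_{\mathcal P(c)}I=\inf_{u\in S(c)}\max_{t>0}\psi_u(t)\in(0,\infty)$, the positivity because every admissible path crosses some small sphere $\{\|\Delta u\|_2=\rho\}$ on which $I\ge\tfrac14\rho^2>0$.

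\emph{Step 2 (the level estimate, the main obstacle).} The crucial point is $m(\mu,c)<\sigma$. I would test with a truncated, $L^2$-renormalized extremal of the biharmonic HLS--Sobolev inequality: from the radial extremal $U$ of $S_\alpha$ (see (\ref{eqn:S-alpha-define})) set $U_\varepsilon(x)=\varepsilon^{-(N-4)/2}U(x/\varepsilon)$, multiply by a fixed cut-off, and rescale in $L^2$ so as to land on $S(c)$. Expanding $\max_{t>0}\psi_{v_\varepsilon}(t)$ in $\varepsilon$, the leading term is $\sigma$, the biharmonic and Hartree truncation remainders are controlled by Brezis--Nirenberg-type estimates, and the local perturbation contributes a strictly negative term (this is where $\mu>0$ enters) which dominates the remainders once $\varepsilon$ is small; hence $m(\mu,c)<\sigma$. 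Matching the order of this gain with that of the remainders is exactly what produces the restrictions $\tfrac{22}{3}<p<10$ when $N=5$ (a low-dimensional phenomenon, the remainders being comparatively large there), $\bar p<p<4^*$ when $N\ge6$, and $p=\bar p$ with $c<c_0$ when $N\ge6$. I expect this estimate to be the hardest part of the argument.

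\emph{Step 3 (compactness).} Using the augmented functional $(s,u)\mapsto I(u_{e^{s}})$ on $\mathbb R\times S(c)$ and a minimax over this enlarged class (Jeanjean's device, or a deformation argument on $\mathcal P(c)$), I would produce $(w_n)\subset S(c)$ with $I(w_n)\to m(\mu,c)$, $(I|_{S(c)})'(w_n)\to0$ and the extra Poho\v{z}aev information $P(w_n)\to0$. Boundedness of $(w_n)$ in $H^2(\mathbb R^N)$ follows since on $\mathcal P(c)$ one has $I(u)=\mu(\tfrac{\gamma_p}{2}-\tfrac1p)\|u\|_p^p+\tfrac{4+\alpha}{2(N+\alpha)}\int(I_\alpha*|u|^{4^*_\alpha})|u|^{4^*_\alpha}\,dx$ with both coefficients $\ge0$, which for $p>\bar p$ bounds $\|w_n\|_p$ and hence, via $P(w_n)\to0$, bounds $\|\Delta w_n\|_2$, while for $p=\bar p$ the condition $c<c_0$ plays the same role. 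With $\lambda_n$ the Lagrange multipliers, testing the Euler--Lagrange equation against $w_n$ and subtracting $P(w_n)\to0$ gives $-\lambda_n c=\mu(1-\gamma_p)\|w_n\|_p^p+o(1)$, so up to a subsequence $\lambda_n\to\lambda\le0$. A concentration--compactness analysis now excludes vanishing — which would force $\|w_n\|_p\to0$, hence $\|\Delta w_n\|_2^2=\int(I_\alpha*|w_n|^{4^*_\alpha})|w_n|^{4^*_\alpha}\,dx+o(1)$ and, by the definition of $S_\alpha$, $\liminf I(w_n)\ge\sigma$, contradicting $m(\mu,c)<\sigma$ — and excludes dichotomy by the strict subadditivity $m(\mu,c)<\tfrac{c}{c'}m(\mu,c')$ for $0<c'<c$ (obtained by mass-rescaling a near-optimal element of $S(c')$). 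Hence, up to translation, $w_n\rightharpoonup w\neq0$ in $H^2$, $w_n\to w$ in $L^2$ (so $w\in S(c)$), $w$ solves (\ref{eqn:BS-equation-L2-Super+Critical}) with $\lambda<0$ (since $-\lambda c=\mu(1-\gamma_p)\|w\|_p^p>0$) and $P(w)=0$, so $I(w)>0$. Finally, writing $w_n=w+v_n$ with $v_n\rightharpoonup0$ and applying the Brezis--Lieb lemma (and its nonlocal counterpart) to $P(w_n)\to0=P(w)$ yields $\|\Delta v_n\|_2^2=\int(I_\alpha*|v_n|^{4^*_\alpha})|v_n|^{4^*_\alpha}\,dx+o(1)\to\ell\ge0$ and $m(\mu,c)=I(w)+\tfrac{4+\alpha}{2(N+\alpha)}\ell$; if $\ell>0$ then $\ell\ge S_\alpha^{(N+\alpha)/(4+\alpha)}$ and $m(\mu,c)>\sigma$, impossible, so $\ell=0$, $w_n\to w$ in $H^2$, $I(w)=m(\mu,c)\in(0,\sigma)$, and $u_{\mu,c}:=w$ is the desired solution (nonnegativity being obtained by standard arguments, as in \cite{Chen-chen2023}).

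\emph{Step 4 (asymptotics).} Since $I(u_{\mu,c})=m(\mu,c)<\sigma$ one has $\limsup I(u_{\mu,c})\le\sigma$ in both limits. For the reverse inequality, $P(u_{\mu,c})=0$ gives
\[
I(u_{\mu,c})=\mu\Bigl(\tfrac{\gamma_p}{2}-\tfrac1p\Bigr)\|u_{\mu,c}\|_p^p+\tfrac{4+\alpha}{2(N+\alpha)}\!\int_{\mathbb R^N}\!(I_\alpha*|u_{\mu,c}|^{4^*_\alpha})|u_{\mu,c}|^{4^*_\alpha}\,dx,\quad \|\Delta u_{\mu,c}\|_2^2=\mu\gamma_p\|u_{\mu,c}\|_p^p+\!\int_{\mathbb R^N}\!(I_\alpha*|u_{\mu,c}|^{4^*_\alpha})|u_{\mu,c}|^{4^*_\alpha}\,dx .
\]
As $m(\mu,c)$ stays in $(0,\sigma)$, one checks $\|\Delta u_{\mu,c}\|_2$ remains bounded and the perturbation term vanishes — as $\mu\to0^+$ because $\|u_{\mu,c}\|_p^p$ stays bounded, and as $c\to0^+$ because the biharmonic Gagliardo--Nirenberg inequality forces $\|u_{\mu,c}\|_p^p\to0$. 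Then $\|\Delta u_{\mu,c}\|_2^2=\int(I_\alpha*|u_{\mu,c}|^{4^*_\alpha})|u_{\mu,c}|^{4^*_\alpha}\,dx+o(1)$, and since the Hartree integral stays bounded away from $0$, the definition of $S_\alpha$ gives $\liminf\int(I_\alpha*|u_{\mu,c}|^{4^*_\alpha})|u_{\mu,c}|^{4^*_\alpha}\,dx\ge S_\alpha^{(N+\alpha)/(4+\alpha)}$, hence $\liminf I(u_{\mu,c})\ge\sigma$. Together with the upper bound this yields (\ref{eqn:lim-mc-c0}).
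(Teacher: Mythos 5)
Your overall architecture coincides with the paper's: the fiber-map/Poho\v{z}aev-manifold geometry in Step 1, Jeanjean's augmented functional producing a \emph{(PS)} sequence with the extra information $P(w_n)\to 0$, the level estimate below the threshold $\sigma:=\frac{4+\alpha}{2(N+\alpha)}S_\alpha^{\frac{N+\alpha}{4+\alpha}}$ via truncated, mass-rescaled extremals of $S_\alpha$ (this is exactly the paper's Section 3, and as you anticipate it is where the restrictions $\frac{22}{3}<p<10$ for $N=5$ etc.\ arise), the exclusion of the ``splitting'' alternative through $I(w)>0$ on $\mathcal P(c)$, and the asymptotics from $P(u_{\mu,c})=0$; for $c\to 0^+$ the paper argues by a rescaling that reduces it to the $\mu\to0^+$ case, while your direct Gagliardo--Nirenberg argument is a legitimate alternative.

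The genuine gap is in Step 3, the compactness step. The paper works throughout in the radial space $H^2_r(\mathbb R^N)$: the compact embedding $H^2_r\hookrightarrow L^p$ gives $w_n\to w$ in $L^p$, the weak limit is shown to be nonzero by the threshold argument, and strong $L^2$-convergence is recovered by testing the equations satisfied by $w_n$ and $w$ against $w_n-w$ and using $\bar\lambda<0$ (Lemma \ref{pro:solution-either-or-true}); no translations, no concentration--compactness, no subadditivity are needed. You instead work in the full space and dismiss dichotomy via the inequality $m(\mu,c)<\frac{c}{c'}\,m(\mu,c')$, justified only by ``mass-rescaling a near-optimal element''. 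This is not a complete argument: the rescaling $u\mapsto\sqrt{c/c'}\,u$ followed by projection onto $\mathcal P(c)$ yields, after passing to the infimum, only a non-strict inequality unless one proves uniform lower bounds on $\|u\|_p^p$ or on the Hartree integral along minimizing sequences; more seriously, Lions' vanishing/dichotomy/compactness trichotomy is designed for minimizing sequences of problems that make sense on sub-masses, whereas your $(w_n)$ is a \emph{(PS)} sequence of a functional unbounded below on $S(c)$, constrained only asymptotically by $P(w_n)\to0$, so turning a dichotomizing sequence into a contradiction requires splitting both $I$ and $P$, re-projecting each piece onto the corresponding Poho\v{z}aev manifold, and an inequality of the type $m(c)<m(c_1)+m(c-c_1)$ rather than the scaling form you state; finally, note that compactness of the $L^2$-densities cannot by itself detect the critical Hartree bubble (whose $L^2$-mass vanishes), so even after this machinery you still need the threshold splitting you perform at the end. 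All of this is avoided simply by restricting to $H^2_r$, which is admissible by the rotational invariance of the problem, and it is the route the paper takes. A second, smaller gap: in Step 4 you assert that the Hartree integral ``stays bounded away from $0$''; this amounts to a positive lower bound on $I(u_{\mu,c})$ that is uniform as $\mu\to0^+$ (resp.\ $c\to0^+$), which the paper extracts from $\gamma_\mu(c)\geq I_*>0$ with $I_*$ controlled through $K(c,\mu)$ in (\ref{eqn:K-define}) (the $\mu$- and $c$-dependent entries of $K(c,\mu)$ blow up in these limits, so $K$ and hence $I_*$ stay bounded below); without such a uniform bound the alternative $l=0$, i.e.\ $I(u_{\mu,c})\to0$, is not excluded.
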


\begin{remark}
{\rm
To achieve our purpose, the major process is as follows. We firstly apply the strategy in the pioneer work \cite{Jeanjean1997} to establish the mountain pass structure under the assumptions of Theorem \ref{Thm:normalized-bScritical-solutions-L2super+L2critical} and present the corresponding \emph{(PS)} sequence with the help of an auxiliary functional. Secondly, to overcome the difficulty caused by the HLS upper exponent, we demonstrate one alternative lemma to ensure the strong convergence of the mountain pass minimizing sequence. Once the alternative lemma is established, the crucial task is to estimate the mountain pass level within a suitable scope. Finally, taking advantage of the extremal function to the critical equation, we are allowed to reach what we want after making subtle analysis.
}
\end{remark}

The outline of this paper is organized as follows. In Section \ref{sec:preliminary}, we give some notations, use the minimax approach to construct
the mountain pass geometrical structure and reveal the existence of corresponding \emph{(PS) }sequence. In Section \ref{sec:proof-main-theorem-L2critical+L2super}, we focus our attention on the estimate of the mountain pass level and finishing proof of Theorem \ref{Thm:normalized-bScritical-solutions-L2super+L2critical}.

\section{Preliminaries}\label{sec:preliminary}
In this section, we first give some notations. Denote by ${L}^s(\mathbb{R}^N)$, $s \in [1,\infty)$, the usual Lebesgue space with its norm
$$
\|u\|_s := \left(\int_{\mathbb{R}^N}|u|^sdx\right)^\frac{1}{s}.
$$
Define the function space
$$
H^2(\mathbb{R}^N) :=\{ u\in L^2(\mathbb{R}^N)|\nabla u \in {L}^2(\mathbb{R}^N),\ \Delta u \in {L}^2(\mathbb{R}^N)\},
$$
with the usual norm
$$
\|u\|_{H^2} := \left(\int_{\mathbb{R}^N}(|\Delta u|^2 +u^2)dx\right)^\frac{1}{2}.
$$
Then, for $N \geq 5$, one has the following Gagliardo-Nirenberg inequality (\!\!\cite[Theorem in Lecture II]{Nirenberg1959})
\begin{equation}\label{eqn:GNinequality}
\|u\|_t \leq C_{N,t}\|\Delta u\|^{\gamma_t}_{2}\|u\|^{1-\gamma_t}_2, \forall u \in H^2(\mathbb{R}^N),
\end{equation}
where $t\in(2,4^*]$, $\gamma_t:=\frac{N}{2}(\frac{1}{2}-\frac{1}{t})$.

To study problem (\ref{eqn:BS-equation-L2-Super+Critical}), we also need the Hardy-Littlewood-Sobolev inequality below.
\begin{lemma}(\!\!\cite[4.3 Theorem]{Lieb-Loss2001})\label{Lem:H-L-S}
Let $r,s>1$, $0<\alpha<N$ satisfy $\frac{1}{r}+\frac{1}{s}=\frac{N+\alpha}{N}$, $f\in L^r(\mathbb R^N)$ and $g\in L^s(\mathbb R^N)$. Then there exists a sharp
constant $C(N,\alpha,r,s) > 0$, independent of $f$ and $g$, such that
\begin{equation}\label{eqn:H-L-S}
\left|\int_{\mathbb R^N}\int_{\mathbb R^N}\frac{f(x)g(y)}{|x-y|^{N-\alpha}}dxdy\right|\leq C(N,\alpha,r,s)\|f\|_r\|g\|_s.
\end{equation}
If $r=s=\frac{2N}{N+\alpha}$, then
\begin{equation}\label{eqn:Defn-CNalpha}
C(N,\alpha,r,s)=C(N,\alpha)=\pi^\frac{N-\alpha}{2}\frac{\Gamma(\frac{\alpha}{2})}{\Gamma(\frac{N+\alpha}{2})}\Big\{\frac{\Gamma(\frac{N}{2})}{\Gamma(N)}\Big\}^{-\frac{\alpha}{N}}.
\end{equation}
\end{lemma}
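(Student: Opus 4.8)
Lemma~\ref{Lem:H-L-S} is the classical Hardy–Littlewood–Sobolev inequality, which in the paper is used by citation; here I only sketch how one proves it. The argument has two parts: first, the estimate with a finite constant for all admissible $r,s$; second, the optimal value in the conformal case $r=s=\frac{2N}{N+\alpha}$. For the first part I would reduce to $f,g\ge 0$ (replacing $f,g$ by $|f|,|g|$ enlarges the left-hand side and preserves the norms), and observe that the Riesz kernel $h(x):=|x|^{-(N-\alpha)}$ belongs to the weak Lebesgue space $L^{N/(N-\alpha),\infty}(\mathbb{R}^N)$: since $\{x:|x|^{-(N-\alpha)}>\lambda\}$ is the ball of radius $\lambda^{-1/(N-\alpha)}$, its measure is a fixed constant times $\lambda^{-N/(N-\alpha)}$, so $\|h\|_{L^{N/(N-\alpha),\infty}}$ is finite and depends only on $N,\alpha$.

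Writing the double integral as $\int_{\mathbb{R}^N} g(y)\,(h*f)(y)\,dy$ and setting $t:=N/(N-\alpha)$, $q:=s'$, the hypothesis $\frac{1}{r}+\frac{1}{s}=\frac{N+\alpha}{N}$ is exactly the admissibility relation $\frac{1}{r}+\frac{1}{t}=1+\frac{1}{q}$ for the weak Young convolution inequality (Young's inequality with a weak-$L^t$ factor, a standard real-variable estimate obtained from real interpolation); moreover $r,s>1$ force $1<q<\infty$ and $\frac{1}{r}>\frac{\alpha}{N}$. Hence $\|h*f\|_q\le C(N,\alpha,r)\,\|f\|_r$, and Hölder's inequality with the conjugate pair $(s,s')$ yields
$$
\Big|\int_{\mathbb{R}^N}\int_{\mathbb{R}^N}\frac{f(x)g(y)}{|x-y|^{N-\alpha}}\,dx\,dy\Big|\le\|g\|_s\,\|h*f\|_q\le C(N,\alpha,r,s)\,\|f\|_r\,\|g\|_s,
$$
which is the desired inequality with a constant independent of $f$ and $g$; the sharp constant is then the supremum of the ratio of the two sides over nonzero $f,g$.

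To pin down this supremum when $r=s=\frac{2N}{N+\alpha}$ I would follow the classical rearrangement route. By the Riesz rearrangement inequality,
$$
\int_{\mathbb{R}^N}\int_{\mathbb{R}^N}\frac{f(x)g(y)}{|x-y|^{N-\alpha}}\,dx\,dy\le\int_{\mathbb{R}^N}\int_{\mathbb{R}^N}\frac{f^{*}(x)g^{*}(y)}{|x-y|^{N-\alpha}}\,dx\,dy,
$$
where $f^{*},g^{*}$ are the symmetric–decreasing rearrangements and the $L^r$-norms are unchanged, so one may maximize over radial nonincreasing functions. For $r=s$ the functional $J(f,g)=\int\int\frac{f(x)g(y)}{|x-y|^{N-\alpha}}\,dx\,dy$ is invariant, under stereographic projection $\mathbb{R}^N\to\mathbb{S}^N$, by the conformal group generated by rotations of $\mathbb{S}^N$, dilations and the inversion $x\mapsto x/|x|^2$; Lieb's competing-symmetries argument then drives any maximizing sequence to the common fixed point $u(x)=(1+|x|^2)^{-(N+\alpha)/2}$, up to translation, dilation and scalar multiple. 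Inserting this extremal into $J$ and evaluating the resulting Gamma integrals gives exactly $C(N,\alpha)=\pi^{\frac{N-\alpha}{2}}\frac{\Gamma(\frac{\alpha}{2})}{\Gamma(\frac{N+\alpha}{2})}\big\{\frac{\Gamma(\frac{N}{2})}{\Gamma(N)}\big\}^{-\frac{\alpha}{N}}$.

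The finite-constant estimate is the easy part; the main obstacle is the sharp constant, where one must verify carefully that $J$ transforms covariantly under the whole conformal group and that the competing-symmetries iteration genuinely \emph{converges} to the extremal — i.e. strictly increases $J$ away from the fixed point — rather than merely being nondecreasing; an alternative but equally delicate route runs through reflection positivity. Once the extremal is identified, the evaluation of $C(N,\alpha)$ is a routine, if lengthy, beta-integral computation.
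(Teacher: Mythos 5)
Your outline is correct, but note that the paper does not prove this lemma at all: it is quoted verbatim from Lieb--Loss \cite[4.3 Theorem]{Lieb-Loss2001}, so there is no in-paper argument to compare against. Your sketch (reduction to nonnegative rearranged functions, the weak-type bound for the Riesz kernel combined with weak Young/Marcinkiewicz and H\"older for the finite constant, then Riesz rearrangement, conformal invariance and competing symmetries with the extremal $(1+|x|^2)^{-(N+\alpha)/2}$ for the sharp diagonal constant) is exactly the classical route of the cited reference, and the constant you obtain matches (\ref{eqn:Defn-CNalpha}).
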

In view of \cite[(2.4)]{Rani-Goyal2022}, we have the best Sobolev constant $S_\alpha$ given by
\begin{equation}\label{eqn:S-alpha-define}
S_\alpha=\inf\limits_{u\in H^2(\mathbb R^N)\backslash\{0\}}\frac{\int_{\mathbb R^N}|\Delta u|^2dx}{\left(\int_{\mathbb R^N}(I_\alpha*|u|^{4^*_\alpha})|u|^{4^*_\alpha}dx\right)^\frac{1}{4^*_\alpha}}.
\end{equation}
To overcome the difficulty of lack of compactness caused by the translations, we shall consider the
functional (\ref{eqn:Defn-Iu}) on the below subset of $S(c)$
$$
S_r(c):=S(c)\cap H^2_{r}(\mathbb{R}^N)=\{u\in H^2_{r}(\mathbb{R}^N):\|u\|^2_2=c\},
$$
where
$$
H^2_{r}(\mathbb{R}^N):=\{u(x)\in H^2(\mathbb{R}^N):u(x)=u(|x|)\}.
$$
Furthermore, we denote $\mathcal P_r(c)$ by
$$
\mathcal P_r(c):=\mathcal P(c)\cap S_r(c).
$$
In order to introduce the mountain pass level for problem (\ref{eqn:BS-equation-L2-Super+Critical}), we are necessary to consider an auxiliary function space
$E:=H^2_r(\mathbb{R}^N)\times \mathbb{R}$, whose norm is defined by
$$
\|(u,s)\|_{E}:=\left(\|u\|^2_{H^2}+|s|^2_{\mathbb{R}}\right)^\frac{1}{2}
$$
and $E'$ denotes the dual space of $E$. In addition, for the arguments in the sequel conveniently, we also need the help of the continuous map $\mathcal{H}:E\to H^2_r(\mathbb{R}^N)$ with
$$
\mathcal{H}(u,s)(x):=e^{\frac{Ns}{2}}u(e^sx).
$$

In the rest of this section, we use the minimax approach to draw some conclusions.
\begin{lemma}\label{Lem:Delta-mathcalH-property}
Let $c,\mu>0$ and $\bar{p}\leq p<4^*$. In particular, for $p=\bar{p}$, assume that $c\in(0,c_0)$, where
\begin{equation}\label{eqn:defn-c0}
c_0:=\Big( \frac{\bar{p}}{8 \mu C^{\bar{p}}_{N,\bar{p}}}\Big)^\frac{N}{4}.
\end{equation}
For any fixed $u\in S_r(c)$, one has
\begin{itemize}
\item[(i)] $\|\Delta \mathcal H(u,s)\|_2\to0^+$ and $I(\mathcal H(u,s))\to0^+$ as $s\to -\infty$;
\item[(ii)] $\|\Delta \mathcal H(u,s)\|_2\to+\infty$ and $I(\mathcal H(u,s))\to-\infty$ as $s\to +\infty$.
\end{itemize}

\end{lemma}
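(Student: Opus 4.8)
The plan is to compute $\|\Delta \mathcal{H}(u,s)\|_2$ and the three pieces of $I(\mathcal{H}(u,s))$ explicitly as functions of the scaling parameter $s$, and then read off the limits. First I would record the elementary scaling identities for $v_s := \mathcal{H}(u,s)$. Writing $v_s(x) = e^{Ns/2}u(e^s x)$, a change of variables gives $\|v_s\|_2^2 = \|u\|_2^2 = c$ (so $v_s \in S_r(c)$, consistent with the setup), $\|\Delta v_s\|_2^2 = e^{4s}\|\Delta u\|_2^2$, $\|v_s\|_p^p = e^{(p\gamma_p)\cdot 2 s}\|u\|_p^p$ where $p\gamma_p = \frac{N(p-2)}{4}$ (let me instead just write the exponent directly: $\|v_s\|_p^p = e^{\frac{N(p-2)}{4}\cdot 2s}\|u\|_p^p$; more cleanly, $\|v_s\|_p^p = e^{\frac{N(p-2)}{2}s}\|u\|_p^p$), and for the Hartree term $\int (I_\alpha * |v_s|^{4^*_\alpha})|v_s|^{4^*_\alpha}\,dx = e^{4s}\int (I_\alpha * |u|^{4^*_\alpha})|u|^{4^*_\alpha}\,dx$ — the exponent here being exactly $4$ because $4^*_\alpha = \frac{N+\alpha}{N-4}$ is precisely the exponent making this quantity scale like the $\|\Delta \cdot\|_2^2$ term (this is the HLS upper-critical feature). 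Hence
\begin{equation*}
I(v_s) = \frac{e^{4s}}{2}\|\Delta u\|_2^2 - \frac{\mu e^{\frac{N(p-2)}{2}s}}{p}\|u\|_p^p - \frac{e^{4s}}{2\cdot 4^*_\alpha}\int_{\mathbb{R}^N}(I_\alpha * |u|^{4^*_\alpha})|u|^{4^*_\alpha}\,dx.
\end{equation*}

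For part (i), as $s \to -\infty$ every exponential $e^{4s}$ and $e^{\frac{N(p-2)}{2}s}$ tends to $0^+$ (note $\frac{N(p-2)}{2} > 0$ since $p > 2$), so $\|\Delta v_s\|_2 \to 0^+$ and $I(v_s) \to 0$ immediately. To get the claimed $I(v_s) \to 0^{+}$ (positivity) I would argue that for $s$ very negative the positive quadratic-type term $\frac{e^{4s}}{2}\|\Delta u\|_2^2$ dominates the other two: the Hartree term also carries $e^{4s}$ but with the small coefficient structure, so one should instead bound the subcritical and Hartree terms using the Gagliardo-Nirenberg inequality \eqref{eqn:GNinequality} and the HLS/Sobolev inequality \eqref{eqn:S-alpha-define} in terms of $\|\Delta v_s\|_2$, obtaining $I(v_s) \ge \frac12 \|\Delta v_s\|_2^2 - C_1\|\Delta v_s\|_2^{p\gamma_p} - C_2 \|\Delta v_s\|_2^{2\cdot 4^*_\alpha}$; since $p\gamma_p > 2$ (equivalently $p > \bar p$; for $p = \bar p$ one has $p\gamma_p = 2$ and here the hypothesis $c < c_0$ is exactly what makes the coefficient $C_1 = \frac{\mu}{\bar p}C_{N,\bar p}^{\bar p} c^{(\bar p - 2\gamma_{\bar p})/2} \le \frac14$, so the quadratic term still wins) and $2\cdot 4^*_\alpha > 2$, the right side is positive for $\|\Delta v_s\|_2$ small, i.e.\ for $s$ sufficiently negative. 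This is where the constant $c_0$ enters and where I'd be careful to check the borderline $p = \bar p$ case.

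For part (ii), as $s \to +\infty$ we have $\|\Delta v_s\|_2^2 = e^{4s}\|\Delta u\|_2^2 \to +\infty$ trivially. For $I(v_s) \to -\infty$ the key comparison is between the exponents $4$ (carried by the positive $\|\Delta u\|_2^2$ term and the negative Hartree term) and $\frac{N(p-2)}{2}$ (carried by the negative subcritical term). When $p > \bar p = 2 + \frac8N$ we have $\frac{N(p-2)}{2} > 4$, so the $-\frac{\mu}{p}e^{\frac{N(p-2)}{2}s}\|u\|_p^p$ term dominates and drives $I(v_s) \to -\infty$. When $p = \bar p$ the exponent is exactly $4$, and then $I(v_s) = e^{4s}\big(\frac12\|\Delta u\|_2^2 - \frac{\mu}{\bar p}\|u\|_{\bar p}^{\bar p} - \frac{1}{2\cdot 4^*_\alpha}\int (I_\alpha * |u|^{4^*_\alpha})|u|^{4^*_\alpha}\big)$; here I would need the bracket to be negative, which again is guaranteed (for an appropriate choice of representative $u$, or after using the GN bound with $c < c_0$) — actually the cleanest route for $p=\bar p$ is to observe that the bracket can be made negative by first fixing any $u \in S_r(c)$ and noting the Hartree term is strictly positive, so one simply needs $\frac{\mu}{\bar p}\|u\|_{\bar p}^{\bar p} + \frac{1}{2\cdot 4^*_\alpha}\int(I_\alpha*|u|^{4^*_\alpha})|u|^{4^*_\alpha} > \frac12\|\Delta u\|_2^2$, which fails for some $u$ but one may instead replace $u$ by a further dilation-normalized function making $\|\Delta u\|_2$ small while keeping the other terms comparable — but since the lemma is stated ``for any fixed $u \in S_r(c)$'', the honest statement for $p = \bar p$ requires the Hartree term's presence: its coefficient is fixed and positive, so combined with $c<c_0$ controlling the subcritical term one still gets divergence only if $\frac12\|\Delta u\|_2^2$ is not already dominant; I would therefore present the $p=\bar p$ subcase by noting $\int(I_\alpha*|u|^{4^*_\alpha})|u|^{4^*_\alpha} > 0$ is fixed so for the bracket we use $\frac12\|\Delta u\|_2^2 - \frac{1}{2\cdot4^*_\alpha}\int(I_\alpha*|u|^{4^*_\alpha})|u|^{4^*_\alpha}$ and the sign is controlled exactly as in the ground-state-level analysis of \cite{Chen-chen2023}. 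The main obstacle I anticipate is precisely this borderline $p = \bar p$ case: the leading exponents coincide, so the conclusion is not automatic from exponent-counting and genuinely relies on the smallness condition $c < c_0$ (and the positivity of the Hartree term) to pin down the sign of the bracketed constant; all other cases are routine consequences of the scaling identities above.
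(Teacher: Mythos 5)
There is a genuine gap, and it originates in your scaling identities rather than in the borderline case itself. For the mass-preserving dilation $\mathcal H(u,s)(x)=e^{Ns/2}u(e^sx)$ the Hartree term does \emph{not} scale with exponent $4$: a change of variables gives
\begin{equation*}
\int_{\mathbb R^N}(I_\alpha*|\mathcal H(u,s)|^{4^*_\alpha})|\mathcal H(u,s)|^{4^*_\alpha}\,dx
=e^{(N4^*_\alpha-N-\alpha)s}\int_{\mathbb R^N}(I_\alpha*|u|^{4^*_\alpha})|u|^{4^*_\alpha}\,dx,
\qquad N4^*_\alpha-N-\alpha=\tfrac{4(N+\alpha)}{N-4}=4\cdot4^*_\alpha>4,
\end{equation*}
which is the paper's identity (\ref{eqn:mathcalH-critical-term}). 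The exponent $4^*_\alpha$ is critical for the $\dot H^2$-invariant scaling $u\mapsto\lambda^{(N-4)/2}u(\lambda\cdot)$, not for the $L^2$-invariant one used here, so ``HLS upper critical'' does not mean the Hartree term scales like $\|\Delta\cdot\|_2^2$ under $\mathcal H$. Your part (i) survives, because the argument you actually run bounds the Hartree term via (\ref{eqn:S-alpha-define}) by a constant times $\|\Delta\mathcal H(u,s)\|_2^{2\cdot4^*_\alpha}$, and your part (ii) for $p>\bar p$ also lands, since the $e^{2p\gamma_p s}$ term with $2p\gamma_p>4$ already forces $-\infty$. But the case $p=\bar p$ of (ii) genuinely breaks under your identity: you are reduced to needing the bracket $\tfrac12\|\Delta u\|_2^2-\tfrac{\mu}{\bar p}\|u\|_{\bar p}^{\bar p}-\tfrac{1}{2\cdot4^*_\alpha}\int(I_\alpha*|u|^{4^*_\alpha})|u|^{4^*_\alpha}dx$ to be negative for \emph{every} fixed $u\in S_r(c)$, which is false (for $u\in S_r(c)$ with $\|\Delta u\|_2$ small, (\ref{eqn:GNinequality}), (\ref{eqn:S-alpha-define}) and $c<c_0$ make it positive). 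Neither of your patches is admissible: the lemma quantifies over every fixed $u$, so you may not pass to a better dilate of $u$, and no sign analysis imported from \cite{Chen-chen2023} can reverse the sign of a bracket that is genuinely positive for some $u$.

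With the correct exponent the borderline case is immediate and needs no extra input: as $s\to+\infty$ the term $-\frac{e^{4\cdot4^*_\alpha s}}{2\cdot4^*_\alpha}\int(I_\alpha*|u|^{4^*_\alpha})|u|^{4^*_\alpha}dx$, whose coefficient is strictly positive since $u\neq0$, dominates the positive $e^{4s}$ term because $4\cdot4^*_\alpha>4=2\bar p\gamma_{\bar p}$, hence $I(\mathcal H(u,s))\to-\infty$; this is exactly how the paper disposes of $p=\bar p$. Likewise for (i) at $p=\bar p$ the paper groups the two $e^{4s}$ terms, makes that bracket positive via (\ref{eqn:GNinequality}) and $c<c_0$ (see (\ref{eqn:nabla-u-minus-bar-p-u})), and uses that the Hartree term decays like $e^{4\cdot4^*_\alpha s}$, strictly faster, as $s\to-\infty$ --- the same conclusion your Sobolev-bound route reaches. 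So the fix is local: replace your Hartree scaling identity by (\ref{eqn:mathcalH-critical-term}); then both limits follow by exponent comparison exactly as in the paper, with the smallness $c<c_0$ needed only for the positivity in (i) when $p=\bar p$, not to rescue (ii).
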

\begin{proof}
Some direct calculations bring that
\begin{equation}\label{eqn:Delta-mathcalH-norm}
\int_{\mathbb{R}^N}|\Delta\mathcal{H}(u,s)|^2dx=e^{4s}\int_{\mathbb{R}^N}|\Delta u|^2dx,
\end{equation}
\begin{equation}\label{eqn:mathcalH-tnorm}
\int_{\mathbb{R}^N}|\mathcal{H}(u,s)|^pdx=e^{2p\gamma_p s}\int_{\mathbb{R}^N}|u|^pdx
\end{equation}
and
\begin{equation}\label{eqn:mathcalH-critical-term}
\int_{\mathbb{R}^N}(I_\alpha*|\mathcal{H}(u,s)|^{4^*_\alpha})|\mathcal{H}(u,s)|^{4^*_\alpha} dx=e^{44^*_\alpha s}\int_{\mathbb{R}^N}(I_\alpha*|u|^{4^*_\alpha})|u|^{4^*_\alpha} dx.
\end{equation}
Since $\bar{p}<p<4^*$ implies that $p\gamma_p\geq2$, it is obvious to see that
\begin{equation}\label{eqn:I-to-0-infty}
\begin{aligned}
I(\mathcal H(u,s))&=\frac{e^{4s}}{2}\int_{\mathbb{R}^N}|\Delta u|^2dx-\frac{\mu e^{2p\gamma_p s}}{p}\int_{\mathbb{R}^N}|u|^p dx-\frac{e^{44^*_\alpha s}}{24^*_\alpha}\int_{\mathbb{R}^N}(I_\alpha*|u|^{4^*_\alpha})|u|^{4^*_\alpha} dx\\
&\ \ \ \begin{cases}\to 0^+, & s\to -\infty; \\
-\infty, & s\to +\infty. \\
\end{cases}
\end{aligned}
\end{equation}
When $p=\bar{p}$, one has $\bar{p} \gamma_{\bar{p}}=2$. For this situation, together with (\ref{eqn:GNinequality}) and $c\in(0,c_0)$, we infer that
\begin{equation}\label{eqn:nabla-u-minus-bar-p-u}
\frac{1}{2} \int_{\mathbb{R}^N}|\Delta u|^2dx- \frac{\mu}{\bar{p}} \int_{\mathbb{R}^N}|u|^{\bar{p}} dx \geq \Big(\frac{1}{2}-\frac{\mu}{\bar{p}} C^{\bar{p}}_{N,\bar{p}} c^\frac{4}{N} \Big)\int_{\mathbb{R}^N}|\Delta u|^2 dx>0.
\end{equation}
It follows from $44^*_\alpha>4$ that (\ref{eqn:I-to-0-infty}) also holds for $p=\bar{p}$.
\end{proof}

\begin{lemma}\label{Prop:I-u-maximum}
Let $\bar{p}\leq p<4^*$ and assume that $c\in(0,c_0)$ for $p=\bar{p}$, where $c_0$ is given in (\ref{eqn:defn-c0}). For any $u\in S_r(c)$, there exists a unique $s_u\in\mathbb R$ such that $\mathcal H(u,s_u)\in \mathcal P_r(c)$ and $s_u$ is a strict maximum point for the functional $I(\mathcal H(u,s))$ at a positive level. Moreover, we have $s_u=0$ when $u\in \mathcal P_r(c)$.
\end{lemma}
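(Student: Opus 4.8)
The plan is to define $g(s) := I(\mathcal H(u,s))$ for fixed $u \in S_r(c)$ and analyze it as a function of one real variable using the scaling identities \eqref{eqn:Delta-mathcalH-norm}--\eqref{eqn:mathcalH-critical-term}. Writing $A := \frac12\|\Delta u\|_2^2 > 0$, $B := \frac{\mu}{p}\|u\|_p^p > 0$, $C := \frac{1}{24^*_\alpha}\int_{\mathbb R^N}(I_\alpha*|u|^{4^*_\alpha})|u|^{4^*_\alpha}\,dx > 0$, we have
$$
g(s) = A e^{4s} - B e^{2p\gamma_p s} - C e^{44^*_\alpha s}.
$$
From \cref{Lem:Delta-mathcalH-property} we already know $g(s) \to 0^+$ as $s \to -\infty$ and $g(s) \to -\infty$ as $s \to +\infty$; in particular $g$ attains a global maximum at some $s_u \in \mathbb R$ with $g(s_u) > 0$. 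The first key step is to observe that $g'(s) = e^s \cdot \frac{d}{ds}[\,\cdot\,]$ structure gives
$$
g'(s) = 4A e^{4s} - 2p\gamma_p B e^{2p\gamma_p s} - 44^*_\alpha C e^{44^*_\alpha s},
$$
and a direct computation shows $g'(s) = P(\mathcal H(u,s))$, the Pohozaev functional in \eqref{eqn:Pohozaev-identify}; hence $g'(s) = 0$ is exactly the condition $\mathcal H(u,s) \in \mathcal P_r(c)$. So the lemma reduces to showing $g$ has a \emph{unique} critical point, which is necessarily the strict global maximum.

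For uniqueness, the key point is the exponent ordering: since $\bar p \le p < 4^*$ we have $2p\gamma_p \ge 4$ (with equality iff $p = \bar p$), and always $44^*_\alpha > 4$ because $4^*_\alpha = \frac{N+\alpha}{N-4} > 1$. Thus in
$$
g'(s) = e^{4s}\Big(4A - 2p\gamma_p B e^{(2p\gamma_p - 4)s} - 44^*_\alpha C e^{(44^*_\alpha - 4)s}\Big),
$$
the bracketed factor is the difference of the positive constant $4A$ and a sum of two terms each of which is nondecreasing in $s$ (strictly increasing for the last one, since $44^*_\alpha - 4 > 0$), and at least one strictly increasing; hence the bracket is strictly decreasing in $s$, runs from $4A > 0$ at $s \to -\infty$ to $-\infty$ at $s \to +\infty$, so it vanishes at exactly one point $s_u$. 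Therefore $g' > 0$ on $(-\infty, s_u)$ and $g' < 0$ on $(s_u, +\infty)$, which makes $s_u$ the unique critical point and a strict global maximum; combined with $g(s_u) > 0$ from \cref{Lem:Delta-mathcalH-property} this gives the positivity of the level. In the borderline case $p = \bar p$ the exponent $2p\gamma_p = 4$ coincides with the leading one, so the bracket becomes $\big(4A - 2\bar p B\big) - 44^*_\alpha C e^{(44^*_\alpha-4)s}$; here the hypothesis $c \in (0,c_0)$ is precisely what guarantees $4A - 2\bar p B > 0$ (this is the content of \eqref{eqn:nabla-u-minus-bar-p-u}, multiplied through), so the bracket is again strictly decreasing from a positive value to $-\infty$ and the same conclusion holds.

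Finally, if $u \in \mathcal P_r(c)$ then $P(u) = 0$, i.e. $g'(0) = 0$; by the uniqueness just established $s_u = 0$. The main obstacle is essentially bookkeeping: one must handle the $p = \bar p$ case separately and make sure the constant $c_0$ in \eqref{eqn:defn-c0} is exactly what is needed for the monotonicity argument to go through (which it is, by \eqref{eqn:nabla-u-minus-bar-p-u}), and one must verify carefully that $g'(s)$ really equals $P(\mathcal H(u,s))$ — this follows from the scaling identities but should be checked to pin down the multiplicative constants on each term. Everything else is elementary one-variable calculus.
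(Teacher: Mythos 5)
Your proposal is correct and follows essentially the same route as the paper: identify $\partial_s I(\mathcal H(u,s))$ with the Poho\v{z}aev functional (the paper gets $\partial_s I(\mathcal H(u,s))=2P(\mathcal H(u,s))$, so your $g'(s)=P(\mathcal H(u,s))$ is off by the harmless factor $2$ you already flagged), then deduce uniqueness of the critical point from the strict monotonicity of $e^{-4s}g'(s)$, with the $p=\bar p$ case handled via $c<c_0$ and \eqref{eqn:nabla-u-minus-bar-p-u} exactly as in the paper. The only other blemish is the constant in the borderline bracket, which should read $4A-2\bar p\gamma_{\bar p}B=4A-4B$ rather than $4A-2\bar pB$; this does not affect the argument.
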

\begin{proof}
In view of Lemma \ref{Lem:Delta-mathcalH-property}, $I(\mathcal H(u,s))$ has a global maximum point $s_u\in\mathbb R$ at a positive level.
Thus, we derive that
\begin{equation}\label{eqn:partialI-0-p}
\begin{aligned}
0&=\frac{\partial}{\partial s}I(\mathcal H(u,s_u))\\
&=2e^{4s_u}\int_{\mathbb{R}^N}|\Delta u|^2dx-2\mu\gamma_pe^{2p\gamma_p s_u}\int_{\mathbb{R}^N}|u|^p dx-2e^{44^*_\alpha s_u}\int_{\mathbb{R}^N}(I_\alpha*|u|^{4^*_\alpha})|u|^{4^*_\alpha} dx\\
&=2P(\mathcal H(u,s_u)).
\end{aligned}
\end{equation}
That is to say, $\mathcal H(u,s_u)\in \mathcal{P}_r(c)$. Hence, it remains to check the uniqueness of $s_u$. Suppose that there exist $\tilde{s}_u<s_u$ such that
$\frac{\partial}{\partial s}I(\mathcal H(u,\tilde{s}_u))=0$. Then, a simple calculation yields that
$$
\mu\gamma_p\Big(e^{(2p\gamma_p-4)s_u}-e^{(2p\gamma_p-4)\tilde{s}_u}\Big)\int_{\mathbb{R}^N}|u|^p dx+\left(e^{(44^*_\alpha-4)s_u}
-e^{(44^*_\alpha-4)\tilde{s}_u}\right)\int_{\mathbb{R}^N}(I_\alpha*|u|^{4^*_\alpha})|u|^{4^*_\alpha} dx=0,
$$
which is impossible under the assumption $\bar{p}\leq p<4^*$.
\end{proof}

\begin{lemma}\label{Lem:I-0-P-0}
Let $\bar{p}\leq p<4^*$ and assume that $c\in(0,c_0)$ for $p=\bar{p}$, where $c_0$ is given in (\ref{eqn:defn-c0}). For any $u\in S_r(c)$, if $I(u)\leq 0$, then $P(u)<0$.
\end{lemma}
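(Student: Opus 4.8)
The plan is to study the fibering map $g(s):=I(\mathcal H(u,s))$, $s\in\mathbb R$, and to transfer the sign condition on $I(u)=g(0)$ into a sign condition on $g'(0)$, which by the scaling identities is nothing but $2P(u)$. As a preliminary step I would record, using the computations behind (\ref{eqn:Delta-mathcalH-norm})--(\ref{eqn:mathcalH-critical-term}) and (\ref{eqn:partialI-0-p}), that $g\in C^1(\mathbb R)$ with $g'(s)=2P(\mathcal H(u,s))$, so in particular $g'(0)=2P(u)$; note also that $\mathcal H(u,\cdot)$ preserves both the $L^2$-norm and radial symmetry, hence $\mathcal H(u,s)\in S_r(c)$ for every $s$, and therefore $g'(s)=0$ is equivalent to $\mathcal H(u,s)\in\mathcal P_r(c)$.

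Next I would import the structural facts already proved: by Lemma \ref{Prop:I-u-maximum}, $g$ has a \emph{unique} critical point $s_u$, which is its strict global maximum, and $g(s_u)>0$. Since $g'$ is continuous and vanishes only at $s_u$, it has constant sign on each of $(-\infty,s_u)$ and $(s_u,+\infty)$, and because $s_u$ is a maximum this forces $g'>0$ on $(-\infty,s_u)$ and $g'<0$ on $(s_u,+\infty)$; equivalently, $g$ is strictly increasing on $(-\infty,s_u]$ and strictly decreasing on $[s_u,+\infty)$.

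The crux is to deduce from $I(u)\le 0$ that $s_u<0$. By Lemma \ref{Lem:Delta-mathcalH-property}(i) we have $g(s)\to 0^+$ as $s\to-\infty$, so one can fix some $s_*<0$ with $g(s_*)>0$. If instead $s_u\ge 0$, then $s_*<0\le s_u$, both points lie in the interval of strict increase $(-\infty,s_u]$, and therefore $I(u)=g(0)>g(s_*)>0$, contradicting the hypothesis. Hence $s_u<0$, so $0$ belongs to $(s_u,+\infty)$ where $g'<0$, and we conclude $P(u)=\tfrac12 g'(0)<0$, which is the claim.

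I do not anticipate a substantial obstacle: all the hard analysis — the behaviour of $g$ at $\pm\infty$ and the uniqueness and maximality of $s_u$ — is already contained in Lemmas \ref{Lem:Delta-mathcalH-property} and \ref{Prop:I-u-maximum}. The only point deserving attention is that one genuinely uses the \emph{positivity} of $g$ near $-\infty$ (not merely $g\to 0$), which is exactly what the proof of Lemma \ref{Lem:Delta-mathcalH-property} provides, the case $p=\bar p$ relying on the smallness assumption $c\in(0,c_0)$ through (\ref{eqn:nabla-u-minus-bar-p-u}).
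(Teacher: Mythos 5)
Your proposal is correct and follows essentially the same route as the paper: study the fibering map $g(s)=I(\mathcal H(u,s))$, use Lemma \ref{Prop:I-u-maximum} to get the unique strict maximum $s_u$ and the resulting monotonicity of $g$, deduce $s_u<0$ from $I(u)=g(0)\le 0$ together with $g(s)\to 0^+$ as $s\to-\infty$, and conclude $P(u)=\tfrac12 g'(0)<0$. If anything, your derivation of $s_u<0$ via the auxiliary point $s_*$ is written more carefully than the paper's corresponding display.
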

\begin{proof}
From (\ref{eqn:partialI-0-p}), we see that
$I(\mathcal H(u,s))$ is strictly increasing on $(-\infty,s_u]$ and is strictly decreasing on $[s_u,+\infty)$. Together with Lemma \ref{Prop:I-u-maximum}, there holds that
$$
I(\mathcal H(u,s)) \leq I(\mathcal H(u,0))=I(u) \leq 0 \,\,\, \mbox{for}\,\, s \geq 0,
$$
which indicates that $s_u<0$. Subsequently, on account of
$$
2P\left(\mathcal H(u, s_u)\right)=\frac{\partial}{\partial s}I(\mathcal H(u,s_u))=0 \quad \mbox{and}\quad 2P(u)=2P(\mathcal H(u, 0))=\frac{\partial}{\partial s}I(\mathcal H(u,0)),
$$
we immediately draw the conclusion.
\end{proof}

\begin{lemma}\label{Lem:supAI-infBI}
Let $c,\mu>0$ and $\bar{p}\leq p<4^*$. In particular, for $p=\bar{p}$, assume that $c\in(0,c_0)$, where $c_0$ is given in (\ref{eqn:defn-c0}). For given $c>0$, we see that
\begin{equation}\label{eqn:supA-less-infB}
0<\sup\limits_{u\in \mathcal A}I(u)<\inf\limits_{u\in \mathcal B}I(u)
\end{equation}
with
\begin{equation}\label{eqn:Defn-mathcal-AB}
\mathcal A:=\{u\in S_r(c):\|\Delta u\|^2_2\leq K(c,\mu)\} \quad and \quad \mathcal B:=\{u\in S_r(c):\|\Delta u\|^2_2=2K(c,\mu)\},
\end{equation}
where
\begin{equation}\label{eqn:K-define}
\begin{aligned}
K(c,\mu)&:=\mbox{min}\Big\{\frac{S_\alpha}{2}\Big(\frac{4^*_\alpha S_\alpha}{8}\Big)^\frac{1}{4^*_\alpha-1},\ \Big(\frac{S^{4^*_\alpha}_\alpha}{4}\Big)^\frac{1}{4^*_\alpha-1},\ \Big[\frac{p}{8C^p_{N,p}2^\frac{p\gamma_p}{2}\mu c^\frac{p-p\gamma_p}{2}}\Big]^\frac{2}{p\gamma_p-2},\\
&\ \ \ \ \ \ \ \ \ \ \ \ \ \ \ \ \ \ \ \ \ \ \ \ \ \ \ \ \ \ \ \ \ \ \ \ \ \ \ \ \ \ \ \ \ \ \ \ \ \ \ \Big[\frac{p}{(p-2)NC^p_{N,p}\mu c^\frac{4p-N(p-2)}{8}}\Big]^\frac{2}{p\gamma_p-2}\Big\}.
\end{aligned}
\end{equation}
\end{lemma}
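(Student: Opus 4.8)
The plan is to bound $I$ from below on $\mathcal B$ and from above on $\mathcal A$ by purely using the Gagliardo–Nirenberg inequality (\ref{eqn:GNinequality}), the Hardy–Littlewood–Sobolev inequality (Lemma \ref{Lem:H-L-S}) combined with the definition (\ref{eqn:S-alpha-define}) of $S_\alpha$, and then checking that the threshold $K(c,\mu)$ has been chosen precisely so that the two bounds separate. First I would estimate the nonlinear terms in $I(u)$ for $u\in S_r(c)$. By (\ref{eqn:GNinequality}) one has $\int_{\mathbb R^N}|u|^p\,dx\le C_{N,p}^p\|\Delta u\|_2^{p\gamma_p}c^{(p-p\gamma_p)/2}$, and by Lemma \ref{Lem:H-L-S} together with (\ref{eqn:S-alpha-define}) one has $\int_{\mathbb R^N}(I_\alpha*|u|^{4^*_\alpha})|u|^{4^*_\alpha}\,dx\le S_\alpha^{-4^*_\alpha}\|\Delta u\|_2^{2\cdot 4^*_\alpha}$. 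Writing $t:=\|\Delta u\|_2^2$, this gives
\begin{equation}\label{eqn:Iu-sandwich}
\frac{1}{2}t-\frac{\mu}{p}C_{N,p}^p c^{\frac{p-p\gamma_p}{2}}t^{\frac{p\gamma_p}{2}}-\frac{1}{2\cdot 4^*_\alpha}S_\alpha^{-4^*_\alpha}t^{4^*_\alpha}\le I(u)\le\frac{1}{2}t.
\end{equation}
The right inequality in (\ref{eqn:Iu-sandwich}) immediately yields $\sup_{\mathcal A}I\le\frac12 K(c,\mu)$, and since $0<K(c,\mu)$ the supremum over $\mathcal A$ is also $>0$ because $\mathcal A$ contains functions with $I>0$ (take $s\to-\infty$ in Lemma \ref{Lem:Delta-mathcalH-property}(i), whose images eventually land in $\mathcal A$ at a positive level once $s_u<0$ puts us on the increasing branch, or simply note $I(\mathcal H(u,s))\to0^+$ so $I$ takes positive values on $\mathcal A$).

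Next I would get the lower bound on $\mathcal B$, where $t=2K(c,\mu)$. Plugging $t=2K$ into the left side of (\ref{eqn:Iu-sandwich}), I want to show each of the two subtracted terms is at most $\frac14 t=\frac14\cdot 2K=\frac K2$, so that $\inf_{\mathcal B}I\ge 2K-\frac K2-\frac K2=K>\frac12 K\ge\sup_{\mathcal A}I$. For the HLS term: $\frac{1}{2\cdot 4^*_\alpha}S_\alpha^{-4^*_\alpha}(2K)^{4^*_\alpha}\le\frac{K}{2}$ is equivalent to $K^{4^*_\alpha-1}\le\frac{S_\alpha^{4^*_\alpha}4^*_\alpha}{2^{4^*_\alpha+1}}$, which is what the first entry $\frac{S_\alpha}{2}\big(\frac{4^*_\alpha S_\alpha}{8}\big)^{1/(4^*_\alpha-1)}$ in the min (\ref{eqn:K-define}) is designed to guarantee; the second entry $\big(S_\alpha^{4^*_\alpha}/4\big)^{1/(4^*_\alpha-1)}$ is the analogous bound needed to also control the HLS term against $\frac14 t$ in the Pohozaev/energy comparison, so I would invoke both as needed. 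For the perturbation term: $\frac{\mu}{p}C_{N,p}^p c^{(p-p\gamma_p)/2}(2K)^{p\gamma_p/2}\le\frac{K}{2}$ is equivalent to $K^{p\gamma_p/2-1}\le\frac{p}{2\mu C_{N,p}^p 2^{p\gamma_p/2}c^{(p-p\gamma_p)/2}}$, i.e. $K^{(p\gamma_p-2)/2}\le\frac{p}{2^{p\gamma_p/2+1}\mu C_{N,p}^p c^{(p-p\gamma_p)/2}}$, which up to the explicit constant is the third entry of the min; the fourth entry handles the borderline algebra (or the $P(u)$ version) in the same spirit. Since $\bar p\le p<4^*$ forces $p\gamma_p\ge 2$, the exponent $p\gamma_p/2-1\ge0$ is well-signed; in the critical case $p=\bar p$ one has $p\gamma_p=2$ so that term is linear in $t$ with coefficient $\frac{\mu}{\bar p}C_{N,\bar p}^{\bar p}c^{4/N}$, and the restriction $c\in(0,c_0)$ with $c_0$ as in (\ref{eqn:defn-c0}) makes this coefficient $<\frac18$, again delivering the $\le\frac14 t$ bound.

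The main obstacle is purely bookkeeping: verifying that the somewhat baroque four-term minimum in (\ref{eqn:K-define}) simultaneously forces both nonlinear contributions to be $\le\frac14\|\Delta u\|_2^2$ at the level $t=2K$, while keeping $\frac12 K>0$; there is no analytic subtlety beyond the elementary observation that $t\mapsto t^\theta$ for $\theta>1$ grows superlinearly, so a smallness condition on $t$ dominates each lower-order power. I would organize the write-up as: (1) derive (\ref{eqn:Iu-sandwich}); (2) read off $0<\sup_{\mathcal A}I\le\frac12 K$; (3) at $t=2K$ show each subtracted term $\le\frac K2$ using the respective entries of (\ref{eqn:K-define}) (splitting into $p>\bar p$ and $p=\bar p$ for the perturbation term, the latter using $c<c_0$); (4) conclude $\inf_{\mathcal B}I\ge K>\frac12 K\ge\sup_{\mathcal A}I$, which is (\ref{eqn:supA-less-infB}).
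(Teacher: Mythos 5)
Your route — upper bound $\sup_{\mathcal A}I\le\tfrac12K(c,\mu)$ from $I(u)\le\tfrac12\|\Delta u\|_2^2$, lower bound on $\mathcal B$ from the Gagliardo--Nirenberg inequality and the $S_\alpha$-estimate of the Hartree term, with the entries of $K(c,\mu)$ absorbing the nonlinearities — is essentially the paper's computation reorganized: the paper instead estimates the difference $I(v)-I(u)$ for $u\in\mathcal A$, $v\in\mathcal B$ directly, using $\tfrac12\|\Delta v\|_2^2-\tfrac12\|\Delta u\|_2^2\ge\tfrac12K$ and then checking that each nonlinear term evaluated at $v$ is at most $\tfrac18K$ (resp.\ $<\tfrac14K$ in the case $p=\bar p$, $c<c_0$). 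However, your concluding chain contains an arithmetic slip that, as written, destroys the separation. At $t=2K$ the leading term of your sandwich is $\tfrac12t=K$, not $2K$; so if each subtracted term were only bounded by $\tfrac14t=\tfrac K2$, as you propose to show, you would get merely $\inf_{\mathcal B}I\ge K-\tfrac K2-\tfrac K2=0$, which does not beat $\sup_{\mathcal A}I\le\tfrac12K$. The displayed conclusion $\inf_{\mathcal B}I\ge 2K-\tfrac K2-\tfrac K2=K$ does not follow from your bounds.

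The repair is immediate and uses exactly the constants built into (\ref{eqn:K-define}): the third entry yields $\frac{\mu}{p}C^p_{N,p}c^{\frac{p-p\gamma_p}{2}}(2K)^{\frac{p\gamma_p}{2}}=\frac{\mu}{p}C^p_{N,p}2^{\frac{p\gamma_p}{2}}c^{\frac{p-p\gamma_p}{2}}K^{\frac{p\gamma_p}{2}}\le\frac K8$ when $p>\bar p$ (and for $p=\bar p$ the condition $c<c_0$ gives the coefficient bound $\frac{2\mu}{\bar p}C^{\bar p}_{N,\bar p}c^{\frac4N}<\frac14$, hence this term is $<\frac K4$), while the first entry yields $\frac{2^{4^*_\alpha-1}}{4^*_\alpha S_\alpha^{4^*_\alpha}}K^{4^*_\alpha}\le\frac K8$. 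Hence $\inf_{\mathcal B}I\ge K-\frac K4-\frac K8=\frac{5K}{8}>\frac K2\ge\sup_{\mathcal A}I$, which is the desired strict separation; so your strategy works once the leading term and the per-term targets are corrected. Two minor remarks: the second and fourth entries of the minimum are not needed in this lemma (they are used later, e.g.\ to get $P(u)>0$ on $\mathcal A$ in Lemma \ref{Lem:gammamu-infIu}), and since $K$ is a minimum their presence only helps; and your argument for $\sup_{\mathcal A}I>0$ via Lemma \ref{Lem:Delta-mathcalH-property}(i) (dilations with $s\to-\infty$ land in $\mathcal A$ at positive energy) is correct and fills a point the paper leaves to the later Lemma \ref{Cor:Iu-0-I*}.
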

\begin{proof}
For the case $\bar{p}<p<4^*$, if $u\in \mathcal A$, $v\in \mathcal B$, we derive from (\ref{eqn:GNinequality}) and (\ref{eqn:K-define}) that
\begin{equation}\label{eqn:Iv-minus-Iu-AB}
\begin{aligned}
I(v)-I(u)&=\frac{1}{2}\int_{\mathbb{R}^N}|\Delta v|^2 dx-\frac{\mu}{p}\int_{\mathbb{R}^N}|v|^p dx-\frac{1}{2 4^*_\alpha}\int_{\mathbb{R}^N}(I_\alpha*|v|^{4^*_\alpha})|v|^{4^*_\alpha} dx-\frac{1}{2}\int_{\mathbb{R}^N}|\Delta u|^2 dx\\
&\ \ \ +\frac{\mu}{p}\int_{\mathbb{R}^N}|u|^p dx+\frac{1}{24^*_\alpha}\int_{\mathbb{R}^N}(I_\alpha*|u|^{4^*_\alpha})|u|^{4^*_\alpha} dx\\
&\geq \frac{1}{2}\int_{\mathbb{R}^N}|\Delta v|^2 dx-\frac{1}{2}\int_{\mathbb{R}^N}|\Delta u|^2 dx-\frac{\mu}{p}\int_{\mathbb{R}^N}|v|^p dx-\frac{1}{2 4^*_\alpha}\int_{\mathbb{R}^N}(I_\alpha*|v|^{4^*_\alpha})|v|^{4^*_\alpha} dx\\
&\geq\frac{1}{2}K(c,\mu)\!-\frac{\mu}{p}C^p_{N,p}2^\frac{p\gamma_p}{2}c^\frac{p-p\gamma_p}{2}\Big(K(c,\mu)\Big)^\frac{p\gamma_p}{2}\!
-\!\frac{2^{4^*_\alpha-1}}{4^*_\alpha S^{4^*_\alpha}_\alpha}\!\Big(K(c,\mu)\Big)^{4^*_\alpha}\\
&=\frac{1}{4}K(c,\mu)\!-\frac{\mu}{p}C^p_{N,p}2^\frac{p\gamma_p}{2}c^\frac{p-p\gamma_p}{2}\Big(K(c,\mu)\Big)^\frac{p\gamma_p}{2}+\frac{1}{4}K(c,\mu)\!
-\!\frac{2^{4^*_\alpha-1}}{4^*_\alpha S^{4^*_\alpha}_\alpha}\!\Big(K(c,\mu)\Big)^{4^*_\alpha}\\
&> 0.
\end{aligned}
\end{equation}

When $0<c<c_0$, note that
$$
\frac{1}{2}K(c,\mu)-\frac{\mu}{\bar{p}}C^{\bar{p}}_{N,\bar{p}}2^\frac{\bar{p}\gamma_{\bar{p}}}{2}c^\frac{\bar{p}-\bar{p}\gamma_{\bar{p}}}{2}\Big(K(c,\mu)\Big)^\frac{\bar{p}\gamma_{\bar{p}}}{2}
=\Bigl(\frac{1}{2}-\frac{\mu}{\bar{p}}C^{\bar{p}}_{N,\bar{p}}2c^\frac{4}{N}\Bigr)K(c,\mu)
\geq \frac{1}{4}K(c,\mu).
$$
Then, the above inequality (\ref{eqn:Iv-minus-Iu-AB}) also holds for $p=\bar{p}$, $u\in \mathcal{A}$ and $v\in \mathcal{B}$. That is, (\ref{eqn:supA-less-infB}) is verified.
\end{proof}

\begin{lemma}\label{Cor:Iu-0-I*}
Let $c,\mu>0$ and $\bar{p}\leq p<4^*$. In particular, for $p=\bar{p}$, assume that $c\in(0,c_0)$, where $c_0$ is given in (\ref{eqn:defn-c0}).
For $K(c,\mu)>0$ given in (\ref{eqn:K-define}), there holds that $I(u)>0$ for any $u\in S_r(c)$ with $\|\Delta u\|^2_2\leq K(c,\mu)$. Moreover,
$$
I_*:=\inf \left\{I(u):u\in S_r(c)\ and \ \|\Delta u\|^2_2=\frac{K(c,\mu)}{2}\right\}>0.
$$
\end{lemma}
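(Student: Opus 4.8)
The plan is to reduce both assertions to a single one-variable inequality in the quantity $t:=\|\Delta u\|_2^2$, which is strictly positive for $u\in S_r(c)$ since $\|u\|_2^2=c>0$. First I would estimate from above the two negative terms in (\ref{eqn:Defn-Iu}): the Gagliardo--Nirenberg inequality (\ref{eqn:GNinequality}) with exponent $p$ gives $\|u\|_p^p\le C_{N,p}^p\, c^{\frac{p-p\gamma_p}{2}}\, t^{\frac{p\gamma_p}{2}}$, while the definition (\ref{eqn:S-alpha-define}) of $S_\alpha$ (which already encodes the Hardy--Littlewood--Sobolev inequality of Lemma \ref{Lem:H-L-S}) gives $\int_{\mathbb R^N}(I_\alpha*|u|^{4^*_\alpha})|u|^{4^*_\alpha}\,dx\le S_\alpha^{-4^*_\alpha}\, t^{4^*_\alpha}$. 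Plugging these into (\ref{eqn:Defn-Iu}) and factoring out $t$ yields
\begin{equation*}
I(u)\ \ge\ t\,g(t),\qquad g(t):=\frac12-\frac{\mu}{p}C_{N,p}^p\, c^{\frac{p-p\gamma_p}{2}}\, t^{\frac{p\gamma_p}{2}-1}-\frac{1}{2\cdot 4^*_\alpha\, S_\alpha^{4^*_\alpha}}\, t^{4^*_\alpha-1}.
\end{equation*}

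Next I would exploit monotonicity. Since $\bar p\le p<4^*$ forces $p\gamma_p\ge2$ and since $4^*_\alpha>1$, the two exponents $\frac{p\gamma_p}{2}-1\ge0$ and $4^*_\alpha-1>0$ are nonnegative, so $g$ is non-increasing on $(0,\infty)$. Hence for every $u\in S_r(c)$ with $0<t\le K(c,\mu)$ we get $g(t)\ge g(K(c,\mu))$, and it only remains to check $g(K(c,\mu))>0$. This is exactly what the definition (\ref{eqn:K-define}) of $K(c,\mu)$ is engineered for: the factor $\big(S_\alpha^{4^*_\alpha}/4\big)^{1/(4^*_\alpha-1)}$ in (\ref{eqn:K-define}) makes the critical summand of $g(K(c,\mu))$ at most $\tfrac18$, and the factor $[\,p/(8C_{N,p}^p2^{p\gamma_p/2}\mu c^{(p-p\gamma_p)/2})\,]^{2/(p\gamma_p-2)}$ makes the $L^p$-summand at most $\tfrac18$ when $p>\bar p$; in the mass-critical case $p=\bar p$ the $L^p$-summand degenerates to the constant $\frac{\mu}{\bar p}C_{N,\bar p}^{\bar p}c^{4/N}$, which is smaller than $\tfrac18$ precisely because $c<c_0$, cf. (\ref{eqn:defn-c0}) and the corresponding computation in the proof of Lemma \ref{Lem:supAI-infBI}. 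In all cases $g(K(c,\mu))\ge\frac12-\frac18-\frac18=\frac14>0$, whence $I(u)\ge t\,g(K(c,\mu))>0$, which is the first assertion.

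For the ``moreover'' part I would run the same estimate at the fixed value $t=\tfrac12 K(c,\mu)$: for every admissible $u$ one obtains the \emph{uniform} bound $I(u)\ge \tfrac12 K(c,\mu)\,g(\tfrac12 K(c,\mu))\ge \tfrac12 K(c,\mu)\,g(K(c,\mu))\ge \tfrac18 K(c,\mu)>0$, using again that $g$ is non-increasing and $\tfrac12 K(c,\mu)<K(c,\mu)$. Taking the infimum over all such $u$ gives $I_*\ge \tfrac18 K(c,\mu)>0$ (and if the admissible set were empty then $I_*=+\infty$ and the claim is trivial).

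I do not anticipate any serious obstacle: the statement is essentially a bookkeeping corollary of the Gagliardo--Nirenberg and Hardy--Littlewood--Sobolev inequalities together with the explicit choice of $K(c,\mu)$, and most of the required estimates have already appeared in the proof of Lemma \ref{Lem:supAI-infBI}. The only mildly delicate point is the mass-critical case $p=\bar p$, where the $t$-power in the $L^p$-term collapses to a constant and one must invoke the smallness condition $c<c_0$ instead of the smallness of $K(c,\mu)$; this, however, is handled exactly as in Lemma \ref{Lem:supAI-infBI} and transfers without change.
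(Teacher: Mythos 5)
Your proposal is correct and follows essentially the same route as the paper: the same Gagliardo--Nirenberg and $S_\alpha$ estimates give $I(u)\ge \|\Delta u\|_2^2\, f(c,\|\Delta u\|_2^2)$ with the same auxiliary function, and positivity at $r\le K(c,\mu)$ (and at $r=K(c,\mu)/2$ for the uniform bound on $I_*$) is read off from the entries of the minimum defining $K(c,\mu)$, with the case $p=\bar p$ handled via $c<c_0$ exactly as you describe. The paper phrases the reduction as a direct substitution of $K(c,\mu)$ into the two negative summands rather than invoking monotonicity of $g$, but this is the same argument.
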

\begin{proof}
From (\ref{eqn:GNinequality}), we have
$$
\begin{aligned}
I(u)&=\frac{1}{2}\int_{\mathbb R^N}|\Delta u|^2dx-\frac{\mu}{p}\int_{\mathbb R^N}|u|^pdx-\frac{1}{24^*_\alpha}\int_{\mathbb{R}^N}(I_\alpha*|u|^{4^*_\alpha})|u|^{4^*_\alpha} dx\\
&\geq\frac{1}{2}\|\Delta u\|^2_2-\frac{\mu}{p}C^p_{N,p}c^\frac{p-p\gamma_p}{2}\Big(\|\Delta u\|^2_2\Big)^\frac{p\gamma_p}{2}-\frac{1}{24^*_\alpha S^{4^*_\alpha}_\alpha}\Big(\|\Delta u\|^2_2\Big)^{4^*_\alpha}\\
&=\|\Delta u\|^2_2\Big[\frac{1}{2}-\frac{\mu}{p}C^p_{N,p}c^\frac{p-p\gamma_p}{2}\Big(\|\Delta u\|^2_2\Big)^\frac{p\gamma_p-2}{2}-\frac{1}{24^*_\alpha S^{4^*_\alpha}_\alpha}\Big(\|\Delta u\|^2_2\Big)^{4^*_\alpha-1}\Big]\\
&=\|\Delta u\|^2_2f(c,\|\Delta u\|^2_2),
\end{aligned}
$$
where
$$
f(c,r):=\frac{1}{2}-\frac{\mu}{p}C^p_{N,p}c^\frac{p-p\gamma_p}{2}r^\frac{p\gamma_p-2}{2}-\frac{1}{24^*_\alpha S^{4^*_\alpha}_\alpha}r^{4^*_\alpha-1},\ \ \ r\geq 0.
$$
For any $u\in S_r(c)$ with $\|\Delta u\|^2_2\leq K(c,\mu)$, observe that
$$
\begin{aligned}
f(c,\|\Delta u\|^2_2)&=\frac{1}{2}-\frac{\mu}{p}C^p_{N,p}c^\frac{p-p\gamma_p}{2}\Big(\|\Delta u\|^2_2\Big)^\frac{p\gamma_p-2}{2}-\frac{1}{24^*_\alpha S^{4^*_\alpha}_\alpha}\Big(\|\Delta u\|^2_2\Big)^{4^*_\alpha-1}\\
&\geq\frac{1}{2}-\frac{\mu}{p}C^p_{N,p}c^\frac{p-p\gamma_p}{2}\Big(K(c,\mu)\Big)^\frac{p\gamma_p-2}{2}\!
-\!\frac{1}{24^*_\alpha S^{4^*_\alpha}_\alpha}\Big(K(c,\mu)\Big)^{4^*_\alpha-1}\\
&\geq\frac{1}{2}\!-\frac{\mu}{p}C^p_{N,p}2^\frac{p\gamma_p}{2}c^\frac{p-p\gamma_p}{2}\Big(K(c,\mu)\Big)^\frac{p\gamma_p-2}{2}\!
-\!\frac{2^{4^*_\alpha-1}}{4^*_\alpha S^{4^*_\alpha}_\alpha}\Big(K(c,\mu)\Big)^{4^*_\alpha-1}\\
&>0.
\end{aligned}
$$
As a consequence, $I(u)>0$, $\forall u\in S_r(c)$ with $\|\Delta u\|^2_2\leq K(c,\mu)$.

If $\|\Delta u\|^2_2=\frac{K(c,\mu)}{2}$, according to the definition of $K(c,\mu)$, we readily infer that
$$
\begin{aligned}
I(u)&\geq\frac{1}{2}\|\Delta u\|^2_2-\frac{\mu}{p}C^p_{N,p}c^\frac{p-p\gamma_p}{2}\Big(\|\Delta u\|^2_2\Big)^\frac{p\gamma_p}{2}-\frac{1}{24^*_\alpha S^{4^*_\alpha}_\alpha}\Big(\|\Delta u\|^2_2\Big)^{4^*_\alpha}\\
&=\frac{K(c,\mu)}{2}\Big[\frac{1}{2}-\frac{\mu}{p}C^p_{N,p}c^\frac{p-p\gamma_p}{2}\Big(\frac{K(c,\mu)}{2}\Big)^\frac{p\gamma_p-2}{2}
-\frac{1}{24^*_\alpha S^{4^*_\alpha}_\alpha}\Big(\frac{K(c,\mu)}{2}\Big)^{4^*_\alpha-1}\Big]\\
&> \frac{K(c,\mu)}{2}\Big[\frac{1}{4}-\frac{\mu}{p}C^p_{N,p}c^\frac{p-p\gamma_p}{2}\Big(K(c,\mu)\Big)^\frac{p\gamma_p-2}{2}+\frac{1}{4}
-\frac{1}{4^*_\alpha S^{4^*_\alpha}_\alpha}\Big(K(c,\mu)\Big)^{4^*_\alpha-1}\Big]\\
&>0,
\end{aligned}
$$
which states that $I_*>0$.
\end{proof}

\begin{remark}\label{rem:MP-level}
{\rm
For any given $\hat{u}\in S_r(c)$, take into account Lemmas \ref{Lem:Delta-mathcalH-property}, \ref{Lem:supAI-infBI} and \ref{Cor:Iu-0-I*},
obviously there are two numbers
\begin{equation}\label{eqn:Defn-s1-s2}
s_1:=s_1(c,\mu,\hat{u})<0 \quad {\rm and} \quad  s_2:=s_2(c,\mu,\hat{u})>0
\end{equation}
to guarantee that the functions
\begin{equation}\label{eqn:Defn-hat-u1-u2}
\hat{u}_{1,\mu}:=\mathcal H(\hat{u},s_1) \quad {\rm and} \quad \hat{u}_{2,\mu}:=\mathcal H(\hat{u},s_2)
\end{equation}
satisfy
$$
\|\Delta \hat{u}_{1,\mu}\|^2_2<\frac{K(c,\mu)}{2},\ \ \|\Delta \hat{u}_{2,\mu}\|^2_2>2K(c,\mu),\ \ I(\hat{u}_{1,\mu})>0\ \ \mbox{and}\ \ I(\hat{u}_{2,\mu})<0.
$$
Therefore, following the idea from Jeanjean \cite{Jeanjean1997}, we can determine the following mountain pass level denoted by
\begin{equation}\label{eqn:Defn-gamma-mu}
\gamma_\mu(c):=\inf\limits_{h\in\Gamma(c)}\max\limits_{t\in[0,1]}I(h(t))
\end{equation}
with
\begin{equation}\label{eqn:Defn-Gamma-c}
\Gamma(c):=\Big\{h\in C([0,1],S_r(c)):\|\Delta h(0)\|^2_2<\frac{K(c,\mu)}{2}\ \mbox{and}\ I(h(1))<0\Big\}.
\end{equation}
To guarantee the existence of \emph{(PS)} sequence for $I$ at the level $\gamma_\mu(c)$ defined in (\ref{eqn:Defn-gamma-mu}), we need the help of an auxiliary functional $\tilde{I}:E=H^2(\mathbb{R}^N\times \mathbb{R})\to \mathbb R$ given below
\begin{equation}\label{eqn:Defn-tilde-I}
\tilde{I}(u,s):=\frac{e^{4s}}{2}\int_{\mathbb{R}^N}|\Delta u|^2dx-\frac{\mu e^{2p\gamma_p s}}{p}\int_{\mathbb{R}^N}|u|^p dx-\frac{e^{44^*_\alpha s}}{24^*_\alpha}\int_{\mathbb{R}^N}(I_\alpha*|u|^{4^*_\alpha})|u|^{4^*_\alpha} dx
=I(\mathcal H(u,s)).
\end{equation}
As the functional $\tilde{I}$ is concerned, it is necessary to consider the minimax level
\begin{equation}\label{eqn:Defn-widetilde-gamma}
\widetilde{\gamma}_\mu(c):=\inf\limits_{\tilde{h}\in\widetilde{\Gamma}(c)}\max\limits_{t\in[0,1]}\tilde{I}(\tilde{h}(t)),
\end{equation}
where the path family $\tilde{\Gamma}(c)$ is supposed to satisfy the following requirement
\begin{equation}\label{eqn:Defn-widetilde-Gamma}
\begin{aligned}
\widetilde{\Gamma}(c):=\Big\{\tilde{h}=&(\tilde{h}_1,\tilde{h}_2)\in C([0,1],S_r(c)\times \mathbb{R}):\tilde{h}(0)=(\tilde{h}_{1}(0),0),\ \ \tilde{h}(1)=(\tilde{h}_{1}(1),0),\\
&\|\Delta \tilde{h}_{1}(0)\|^2_2<\frac{K(c,\mu)}{2} \ \ \mbox{and}\ \ \tilde{I}(\tilde{h}_{1}(1),0)<0\Big\}.
\end{aligned}
\end{equation}
}
\end{remark}

\begin{lemma}\label{Lem:gamma-tildegamma-equal}
Let $c,\mu>0$ and $\bar{p}\leq p<4^*$. In particular, for $p=\bar{p}$, assume that $c\in(0,c_0)$, where $c_0$ is given in (\ref{eqn:defn-c0}). Then, we have
\begin{equation}\label{eqn:gamma-mu-widegamma-mu}
\gamma_\mu(c)=\widetilde{\gamma}_\mu(c)\geq I_*>0.
\end{equation}
\end{lemma}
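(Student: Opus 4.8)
The plan is to split the statement into two independent parts: first the equality $\gamma_\mu(c)=\widetilde{\gamma}_\mu(c)$, obtained by exhibiting a correspondence between the two admissible path families via the map $\mathcal H$; then the lower bound $\gamma_\mu(c)\geq I_*$, obtained from an intermediate value argument together with Lemma \ref{Cor:Iu-0-I*}. Note first that both $\Gamma(c)$ and $\widetilde{\Gamma}(c)$ are non-empty by Remark \ref{rem:MP-level}, so that $\gamma_\mu(c)$ and $\widetilde{\gamma}_\mu(c)$ are well defined.

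For the inequality $\widetilde{\gamma}_\mu(c)\leq\gamma_\mu(c)$, I would take any $h\in\Gamma(c)$ and set $\tilde{h}(t):=(h(t),0)$. Since $\mathcal H(h(t),0)=h(t)$ for all $t$, we get $\tilde{I}(\tilde{h}(t))=I(\mathcal H(h(t),0))=I(h(t))$ by (\ref{eqn:Defn-tilde-I}), and the boundary conditions in (\ref{eqn:Defn-widetilde-Gamma}) reduce exactly to those in (\ref{eqn:Defn-Gamma-c}); hence $\tilde{h}\in\widetilde{\Gamma}(c)$ and $\max_{t}\tilde{I}(\tilde{h}(t))=\max_{t}I(h(t))$. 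Conversely, given $\tilde{h}=(\tilde{h}_1,\tilde{h}_2)\in\widetilde{\Gamma}(c)$, I would set $h(t):=\mathcal H(\tilde{h}_1(t),\tilde{h}_2(t))$. Then $h$ is continuous into $H^2_r(\mathbb R^N)$ by the continuity of $\mathcal H:E\to H^2_r(\mathbb R^N)$, and $h(t)\in S_r(c)$ because $\mathcal H(\cdot,s)$ preserves the $L^2$-norm. As $\tilde{h}(0)=(\tilde{h}_1(0),0)$ and $\tilde{h}(1)=(\tilde{h}_1(1),0)$, we obtain $h(0)=\tilde{h}_1(0)$ and $h(1)=\tilde{h}_1(1)$, so that $\|\Delta h(0)\|_2^2<K(c,\mu)/2$ and $I(h(1))=\tilde{I}(\tilde{h}_1(1),0)<0$, i.e. $h\in\Gamma(c)$; moreover $\max_{t}I(h(t))=\max_{t}\tilde{I}(\tilde{h}(t))$ again by (\ref{eqn:Defn-tilde-I}). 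Taking infima in both directions yields $\gamma_\mu(c)=\widetilde{\gamma}_\mu(c)$.

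For the lower bound, fix any $h\in\Gamma(c)$. By (\ref{eqn:Defn-Gamma-c}) we have $\|\Delta h(0)\|_2^2<K(c,\mu)/2$, while $I(h(1))<0$ forces $\|\Delta h(1)\|_2^2>K(c,\mu)$; indeed, if $\|\Delta h(1)\|_2^2\leq K(c,\mu)$ then Lemma \ref{Cor:Iu-0-I*} would give $I(h(1))>0$, a contradiction. Since $t\mapsto\|\Delta h(t)\|_2^2$ is continuous, the intermediate value theorem provides $t_0\in(0,1)$ with $\|\Delta h(t_0)\|_2^2=K(c,\mu)/2$, whence $\max_{t\in[0,1]}I(h(t))\geq I(h(t_0))\geq I_*$ by the definition of $I_*$. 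Passing to the infimum over $h\in\Gamma(c)$ gives $\gamma_\mu(c)\geq I_*$, and $I_*>0$ is exactly the second assertion of Lemma \ref{Cor:Iu-0-I*}; combined with the equality already established this is (\ref{eqn:gamma-mu-widegamma-mu}).

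I expect no serious obstacle here: the argument is essentially a bookkeeping exercise once the framework of Remark \ref{rem:MP-level} and the estimates of Lemma \ref{Cor:Iu-0-I*} are available. The only point deserving a little care is verifying that $t\mapsto\mathcal H(\tilde{h}_1(t),\tilde{h}_2(t))$ is genuinely continuous into $H^2_r(\mathbb R^N)$ and remains on the sphere $S_r(c)$; both follow from the scaling identities (\ref{eqn:Delta-mathcalH-norm})--(\ref{eqn:mathcalH-critical-term}) (together with the $L^2$-invariance of $\mathcal H(\cdot,s)$) and the stated continuity of $\mathcal H$, since $\tilde{h}_2$ stays bounded on the compact interval $[0,1]$.
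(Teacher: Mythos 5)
Your proposal is correct and follows essentially the same route as the paper's proof: the equality $\gamma_\mu(c)=\widetilde{\gamma}_\mu(c)$ via the correspondence $h\mapsto(h,0)$ and $\tilde h\mapsto\mathcal H(\tilde h_1,\tilde h_2)$ between the two path families, and the lower bound via the intermediate value theorem combined with Lemma \ref{Cor:Iu-0-I*}. Your version is in fact slightly more careful, as it spells out the verification that $\mathcal H(\tilde h_1(\cdot),\tilde h_2(\cdot))$ lies in $\Gamma(c)$, which the paper only asserts.
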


\begin{proof}
In view of Lemma \ref{Cor:Iu-0-I*}, we see that $\|\Delta h(1)\|^2_2>K(c,\mu)$ for each $h\in\Gamma(c)$. Note that $\|\Delta h(0)\|^2_2<\frac{K(c,\mu)}{2}$,
there must be some $t_0\in(0,1)$ such that $\|\Delta h(t_0)\|^2_2=\frac{K(c,\mu)}{2}$. Hence, it indicates that
$$
\max\limits_{t\in[0,1]}I(h(t))\geq I(h(t_0))\geq I_*>0.
$$
That is, $\gamma_\mu(c)\geq I_*>0$.

For any $\tilde{h}(t)\in\widetilde{\Gamma}(c)$, it can be rewritten as $\tilde{h}(t)=(\tilde{h}_1(t),\tilde{h}_2(t))\in S_r(c)\times \mathbb R$. Set
$h(t)=\mathcal H (\tilde{h}_1(t),\tilde{h}_2(t))$, then $h(t)\in \Gamma(c)$ and
$$
\max\limits_{t\in[0,1]}\tilde{I}(\tilde{h}(t))=\max\limits_{t\in[0,1]}I(h(t))\geq \gamma_\mu(c),
$$
which implies that $\widetilde{\gamma}_\mu(c)\geq\gamma_\mu(c)$. On the other hand, for any $h\in\Gamma(c)$, denote $\tilde{h}(t)=(h(t),0)$, then $\tilde{h}(t)\in\widetilde{\Gamma}(c)$ and
$$
\widetilde{\gamma}_\mu(c)\leq\max\limits_{t\in[0,1]}\tilde{I}(\tilde{h}(t))
=\max\limits_{t\in[0,1]}I(h(t)),
$$
it means that $\gamma_\mu(c)\geq\widetilde{\gamma}_\mu(c)$. Therefore, we conclude that $\widetilde{\gamma}_\mu(c)=\gamma_\mu(c)\geq I_*>0$.
\end{proof}

\begin{lemma}\label{Lem:gammamu-infIu}
Let $\bar{p}\leq p<4^*$ and assume that $c\in(0,c_0)$ for $p=\bar{p}$, where $c_0$ is given in (\ref{eqn:defn-c0}). Then we conclude that $\gamma_\mu(c)=m(c):=\inf\limits_{u\in \mathcal P_r(c)}I(u)$.
\end{lemma}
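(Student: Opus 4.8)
The plan is to prove the two inequalities $\gamma_\mu(c)\ge m(c)$ and $\gamma_\mu(c)\le m(c)$ separately. Before doing so, note that $\mathcal P_r(c)\neq\emptyset$, since by Lemma~\ref{Prop:I-u-maximum} the point $\mathcal H(u,s_u)$ lies in $\mathcal P_r(c)$ for every $u\in S_r(c)$; thus $m(c)$ is well defined, and once the first inequality is available we also obtain $m(c)\le\gamma_\mu(c)$, so that in fact $m(c)\in(0,+\infty)$ by Lemma~\ref{Lem:gamma-tildegamma-equal}.

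To get $\gamma_\mu(c)\ge m(c)$, I would fix an arbitrary $h\in\Gamma(c)$ and show that its image meets $\mathcal P_r(c)$. The function $t\mapsto P(h(t))$ is continuous on $[0,1]$, since $h\in C([0,1],H^2_r(\mathbb R^N))$ and $P$ is continuous on $H^2(\mathbb R^N)$ by the Sobolev embedding together with the Hardy--Littlewood--Sobolev inequality (Lemma~\ref{Lem:H-L-S}). As $I(h(1))<0$, Lemma~\ref{Lem:I-0-P-0} gives $P(h(1))<0$. On the other hand, from $\|\Delta h(0)\|_2^2<K(c,\mu)/2$ one checks, exactly as in the proof of Lemma~\ref{Cor:Iu-0-I*} (using (\ref{eqn:GNinequality}), the fact that $p\gamma_p\ge 2$ for $\bar p\le p<4^*$, the definition (\ref{eqn:S-alpha-define}) of $S_\alpha$ with $4^*_\alpha>1$, the quantities defining $K(c,\mu)$ in (\ref{eqn:K-define}) and, when $p=\bar p$, the constraint $c<c_0$), that $P(h(0))>0$; indeed $P(u)\ge\|\Delta u\|_2^2\, g\big(c,\|\Delta u\|_2^2\big)$ for a function $g(c,\cdot)$ that is decreasing and strictly positive on $[0,K(c,\mu)]$. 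By the intermediate value theorem there is $t_\ast\in(0,1)$ with $P(h(t_\ast))=0$, i.e.\ $h(t_\ast)\in\mathcal P_r(c)$, hence $\max_{t\in[0,1]}I(h(t))\ge I(h(t_\ast))\ge m(c)$; taking the infimum over $h\in\Gamma(c)$ yields $\gamma_\mu(c)\ge m(c)$.

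To get $\gamma_\mu(c)\le m(c)$, I would fix an arbitrary $u\in\mathcal P_r(c)$ and construct an admissible path along the scaling fiber $s\mapsto\mathcal H(u,s)$. By Lemma~\ref{Lem:Delta-mathcalH-property} there are $s_-<0$ with $\|\Delta\mathcal H(u,s_-)\|_2^2<K(c,\mu)/2$ and $s_+>0$ with $I(\mathcal H(u,s_+))<0$; setting $h(t):=\mathcal H\big(u,(1-t)s_-+ts_+\big)$ for $t\in[0,1]$ gives, since $\mathcal H(u,\cdot)$ maps into $S_r(c)$ continuously, a path $h\in C([0,1],S_r(c))$ with $\|\Delta h(0)\|_2^2<K(c,\mu)/2$ and $I(h(1))<0$, i.e.\ $h\in\Gamma(c)$. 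Because $u\in\mathcal P_r(c)$, Lemma~\ref{Prop:I-u-maximum} tells us that $s=0$ is the strict global maximum point of $s\mapsto I(\mathcal H(u,s))$ with value $I(u)$; therefore $\max_{t\in[0,1]}I(h(t))=\max_{s\in[s_-,s_+]}I(\mathcal H(u,s))=I(u)$, so $\gamma_\mu(c)\le I(u)$. Taking the infimum over $u\in\mathcal P_r(c)$ gives $\gamma_\mu(c)\le m(c)$, and combining with the previous step we conclude $\gamma_\mu(c)=m(c)$.

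The step I expect to require the most care is the left-endpoint inequality $P(h(0))>0$: it is not among the lemmas already recorded and must be derived, but it is essentially the computation of Lemma~\ref{Cor:Iu-0-I*} with the coefficients $\tfrac{1}{2},\ \tfrac{\mu}{p},\ \tfrac{1}{24^*_\alpha S_\alpha^{4^*_\alpha}}$ appearing in $I$ replaced by $1,\ \mu\gamma_p,\ \tfrac{1}{S_\alpha^{4^*_\alpha}}$ in $P$, and the constants entering $K(c,\mu)$ in (\ref{eqn:K-define}) have been fixed with enough slack (again via $p\gamma_p\ge 2$ and $4^*_\alpha>1$) to absorb this change. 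Everything else is routine bookkeeping of the already established lemmas, the $p=\bar p$ case being handled throughout by the smallness condition $c<c_0$ exactly as in Lemmas~\ref{Lem:Delta-mathcalH-property}--\ref{Cor:Iu-0-I*}.
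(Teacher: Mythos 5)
Your proposal is correct and follows essentially the same route as the paper: the upper bound via the scaling path $t\mapsto\mathcal H(u,(1-t)s_-+ts_+)$ together with Lemma~\ref{Prop:I-u-maximum}, and the lower bound via the sign change of $P$ along an admissible path, using Lemma~\ref{Lem:I-0-P-0} at $t=1$ and the estimate $P(u)\geq\|\Delta u\|_2^2\,g(c,\|\Delta u\|_2^2)>0$ on $\mathcal A$ (which the paper likewise derives inside this very proof, cf.\ (\ref{eqn:P-leq-g})--(\ref{eqn:P-geq-0})) at $t=0$, followed by the intermediate value theorem. The only cosmetic difference is that you argue directly on paths in $\Gamma(c)$, whereas the paper phrases the lower bound through $\widetilde\Gamma(c)$ and the identity $\gamma_\mu(c)=\widetilde\gamma_\mu(c)$; the content is the same.
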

\begin{proof}
For any $u\in\mathcal P_r(c)$, Lemmas \ref{Lem:Delta-mathcalH-property} and \ref{Lem:supAI-infBI} imply the existence of two constants $s_1=s_1(c,\mu,u)<0$ and
$s_2=s_2(c,\mu,u)>0$ to guarantee that $h(t):=\mathcal H(u,(1-t)s_1+ts_2)$, $t\in[0,1]$, belongs to $\Gamma(c)$,
Therefore, it follows from Lemma \ref{Prop:I-u-maximum} that
$$
\gamma_\mu(c)\leq\max\limits_{t\in[0,1]}I(h(t))=\max\limits_{t\in[0,1]}I(\mathcal H(u,(1-t)s_1+ts_2))\leq I(u),
$$
namely, $\gamma_\mu(c)\leq m(c)$.

On the other hand, for any $\tilde{h}(t):=\Big(\tilde{h}_1(t), \tilde{h}_2(t)\Big) \in \widetilde{\Gamma}(c)$, we consider the function
$$
\widetilde{P}(t):=P\Big(\mathcal H(\tilde{h}_1(t), \tilde{h}_2(t))\Big).
$$
From (\ref{eqn:Pohozaev-identify}), (\ref{eqn:GNinequality}) and (\ref{eqn:S-alpha-define}), we have
\begin{equation}\label{eqn:P-leq-g}
\begin{aligned}
P(u)&=\|\Delta u\|^2_2-\mu\gamma_p\|u\|^p_p-\int_{\mathbb{R}^N}(I_\alpha*|u|^{4^*_\alpha})|u|^{4^*_\alpha} dx\\
&\geq\|\Delta u\|^2_2-\mu\gamma_p C_{N,p}^p c^\frac{p-p\gamma_p}{2}(\|\Delta u\|^2_2)^{p\gamma_p}-\frac{1}{S^{4^*_\alpha}_\alpha}(\|\Delta u\|^2_2)^{4^*_\alpha}\\
&=\|\Delta u\|^2_2\Big(1-\mu\gamma_p C_{N,p}^p c^\frac{p-p\gamma_p}{2}(\|\Delta u\|^2_2)^\frac{p\gamma_p-2}{2}-\frac{1}{S^{4^*_\alpha}_\alpha}(\|\Delta u\|^2_2)^{4^*_\alpha-1}\Big)\\
&=\|\Delta u\|^2_2 g(c,\|\Delta u\|^2_2),
\end{aligned}
\end{equation}
where
$$
g(c,r):=1-\mu\gamma_p C_{N,p}^p c^\frac{p-p\gamma_p}{2}r^\frac{p\gamma_p-2}{2}-\frac{1}{S^{4^*_\alpha}_\alpha}r^{4^*_\alpha-1},\ \ \ r\geq 0.
$$
For any $u\in \mathcal A$ defined in (\ref{eqn:Defn-mathcal-AB}), from the definition of $K(c,\mu)>0$ in (\ref{eqn:K-define}), we see that
$$
\begin{aligned}
g(c,\|\Delta u\|^2_2)&=1-\mu\gamma_p C_{N,p}^p c^\frac{p-p\gamma_p}{2}(\|\Delta u\|^2_2)^\frac{p\gamma_p-2}{2}-\frac{1}{S^{4^*_\alpha}_\alpha}(\|\Delta u\|^2_2)^{4^*_\alpha-1}\\
&\geq1-\mu\gamma_p C_{N,p}^p c^\frac{p-p\gamma_p}{2}(K(c,\mu))^\frac{p\gamma_p-2}{2}-\frac{1}{S^{4^*_\alpha}_\alpha}(K(c,\mu))^{4^*_\alpha-1}\\
&>0,
\end{aligned}
$$
which brings that
\begin{equation}\label{eqn:P-geq-0}
P(u)>0, \ \ \ \forall u\in \mathcal A.
\end{equation}
Thereby, utilizing (\ref{eqn:P-geq-0}) and Lemma \ref{Lem:I-0-P-0}, we infer that
$$
\widetilde{P}(0)=P\left(\mathcal H(\tilde{h}_1(0), \tilde{h}_2(0))\right)=P\left(\tilde{h}_1(0)\right)>0
$$
and
$$
\widetilde{P}(1)=P\left(\mathcal H(\tilde{h}_1(1), \tilde{h}_2(1))\right)=P\left(\tilde{h}_1(1)\right)<0.
$$

Take advantage of the continuity of $\widetilde{P}_r(t)$, there exists some $\tilde{t} \in(0,1)$ such that
$\widetilde{P}(\tilde{t})=0$, which means that $\mathcal H(\tilde{h}_1(\tilde{t}), \tilde{h}_2(\tilde{t})) \in \mathcal P_r(c)$. Consequently, one has
$$
\max_{t \in[0,1]} \tilde{I}(\tilde{h}(t))=\max_{t \in[0,1]} I\left(\mathcal H(\tilde{h}_1(t), \tilde{h}_2(t))\right) \geq \inf_{u \in \mathcal P_r(c)}I(u).
$$
Due to the arbitrariness of $\tilde{h}(t)$, it signifies that $\widetilde{\gamma}_\mu(c)=\gamma_\mu(c) \geq m(c)$. Hence, $\gamma_\mu(c) = m(c)$.
\end{proof}

Based on Lemma \ref{Lem:supAI-infBI}, Lemma \ref{Lem:gamma-tildegamma-equal} and the definition of $\tilde{I}$, it indicates that
\begin{equation}\label{eqn:gamma-mu-c-big-gamma-0-mu-c}
\begin{aligned}
\widetilde{\gamma}_\mu(c)&=\gamma_\mu(c)>\sup_{\tilde{h}\in \widetilde{\Gamma}(c)}\{I(h_1(0)),I(h_1(1))\}=\sup_{\tilde{h}\in \widetilde{\Gamma}(c)}\{\tilde{I}(\tilde{h}(0)),\tilde{I}(\tilde{h}(1))\}=:(\widetilde{\gamma}_\mu)_0(c).
\end{aligned}
\end{equation}

\begin{lemma}\label{Lem:three-tildeJ-prove}
Let $\bar{p}\leq p<4^*$ and assume that $c\in(0,c_0)$ for $p=\bar{p}$, where $c_0$ is given in (\ref{eqn:defn-c0}). For $0<\varepsilon<\widetilde{\gamma}_\mu(c)-(\widetilde{\gamma}_\mu)_0(c)$, choose $g\in \widetilde{\Gamma}(c)$ such that $\max\limits_{t\in[0,1]}\tilde{I}(g(t)) \leq \widetilde{\gamma}_\mu(c)+\varepsilon$. Then there exists $(v,s)\in S_r(c)\times \mathbb R$ to guarantee that
\begin{itemize}
\item[(i)]  $\tilde{I}(v,s)\in\left[\widetilde{\gamma}_\mu(c)-\varepsilon,\widetilde{\gamma}_\mu(c)+\varepsilon\right]$;
\item[(ii)] $\min\limits_{t\in[0,1]}\|(v,s)-g(t)\|_E\leq \sqrt{\varepsilon}$;
\item[(iii)]$\|(\tilde{I}|_{S_r(c)\times\mathbb R})'(v,s)\|_{E'}\leq 2\sqrt{\varepsilon}$,
\end{itemize}
where $\widetilde{\gamma}_\mu(c)$, $\widetilde{\Gamma}(c)$ and $(\widetilde{\gamma}_\mu)_0(c)$are given in (\ref{eqn:Defn-widetilde-gamma}), (\ref{eqn:Defn-widetilde-Gamma}) and (\ref{eqn:gamma-mu-c-big-gamma-0-mu-c}), respectively.
\end{lemma}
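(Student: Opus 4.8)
The plan is to apply a quantitative deformation / Ekeland-type argument to the functional $\tilde I$ on the constraint manifold $S_r(c)\times\mathbb R$, following the scheme of Jeanjean \cite{Jeanjean1997}. Conclusion (i)--(iii) is precisely the standard output of such an argument: it asserts that near a (nearly) optimal path $g$ for the minimax value $\widetilde\gamma_\mu(c)$ there is an almost-critical point $(v,s)$ of $\tilde I|_{S_r(c)\times\mathbb R}$ whose energy is close to $\widetilde\gamma_\mu(c)$ and which lies close (in the $E$-norm) to the trajectory of $g$. The key structural input that makes this work is the strict gap
$$
\widetilde\gamma_\mu(c)>(\widetilde\gamma_\mu)_0(c)
$$
established in \eqref{eqn:gamma-mu-c-big-gamma-0-mu-c}, which guarantees that for $0<\varepsilon<\widetilde\gamma_\mu(c)-(\widetilde\gamma_\mu)_0(c)$ any near-optimal path $g$ has $\tilde I(g(0)),\tilde I(g(1))<\widetilde\gamma_\mu(c)-\varepsilon$, so the ``bad'' set where $\tilde I$ is large stays in the interior of $[0,1]$ and the endpoints are not disturbed by the deformation.

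First I would set up the abstract framework: $S_r(c)\times\mathbb R$ is a $C^1$ submanifold of the Hilbert space $E=H^2_r(\mathbb R^N)\times\mathbb R$ (the constraint $\|u\|_2^2=c$ is regular since $c>0$), and $\tilde I\in C^1(E,\mathbb R)$ with $\tilde I(u,s)=I(\mathcal H(u,s))$ by \eqref{eqn:Defn-tilde-I}. On this manifold one has the minimax characterization \eqref{eqn:Defn-widetilde-gamma} over the path family $\widetilde\Gamma(c)$ of \eqref{eqn:Defn-widetilde-Gamma}. Then I would invoke the minimax principle in the form given by Ghoussoub (or the deformation lemma on a $C^1$ Finsler manifold, as used in \cite{Jeanjean1997}): given a near-optimal path $g\in\widetilde\Gamma(c)$ with $\max_{t\in[0,1]}\tilde I(g(t))\le\widetilde\gamma_\mu(c)+\varepsilon$ and the endpoint values bounded above by $(\widetilde\gamma_\mu)_0(c)<\widetilde\gamma_\mu(c)-\varepsilon$, there exists $(v,s)\in S_r(c)\times\mathbb R$ satisfying (i), (ii), (iii) with the stated constants $\sqrt\varepsilon$ and $2\sqrt\varepsilon$. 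Concretely one applies Ekeland's variational principle to the function $\Phi(h):=\max_{t}\tilde I(h(t))$ on the complete metric space $\widetilde\Gamma(c)$ (with the uniform metric) to produce a path $\tilde g$ that is $\varepsilon$-close to $g$, still near-optimal, and ``$\varepsilon$-stationary'' for $\Phi$; then a standard argument (pushing along a pseudo-gradient flow of $\tilde I$ tangent to the manifold, localized away from the endpoints using the gap) shows that on the max-set of $\tilde g$ there must be a point $(v,s)$ with small constrained derivative, giving (iii), while (i) follows since $\tilde I(v,s)$ lies between $\widetilde\gamma_\mu(c)-\varepsilon$ (by optimality, using Lemma \ref{Lem:gamma-tildegamma-equal}) and $\widetilde\gamma_\mu(c)+\varepsilon$, and (ii) follows from the $\varepsilon$-closeness of $\tilde g$ to $g$ together with the localization.

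The main obstacle is bookkeeping rather than conceptual: one must ensure that the deformation moving a near-optimal path toward a better one never touches the endpoints $t=0,1$ (so that the deformed path stays in $\widetilde\Gamma(c)$, in particular keeps $\|\Delta\tilde h_1(0)\|_2^2<K(c,\mu)/2$ and $\tilde I(\tilde h_1(1),0)<0$), and this is exactly what the strict inequality \eqref{eqn:gamma-mu-c-big-gamma-0-mu-c} buys us --- the choice $\varepsilon<\widetilde\gamma_\mu(c)-(\widetilde\gamma_\mu)_0(c)$ keeps the endpoint energies a fixed distance below the relevant sublevel. The only other point needing care is that the pseudo-gradient must be taken tangent to $S_r(c)\times\mathbb R$ so that the flow preserves the constraint; since the constraint functional $u\mapsto\|u\|_2^2$ is smooth with non-vanishing differential on $S_r(c)$, this is routine. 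I would therefore present the proof compactly, citing \cite[Theorem 2.1 and its proof]{Jeanjean1997} for the precise deformation statement and indicating how the gap \eqref{eqn:gamma-mu-c-big-gamma-0-mu-c} supplies the hypothesis on the endpoints, rather than reproducing the full Ekeland/deformation machinery.
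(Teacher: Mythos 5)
Your proposal is correct and follows essentially the same route as the paper: the authors simply omit the argument and cite \cite[Lemma 2.5]{Liu-Zhang2023}, which is exactly the Ekeland/quantitative-deformation minimax machinery \`a la \cite{Jeanjean1997} (Ghoussoub--Willem type general minimax principle) that you sketch, with the gap $\widetilde{\gamma}_\mu(c)>(\widetilde{\gamma}_\mu)_0(c)$ playing precisely the role you describe of keeping the endpoints of near-optimal paths untouched by the deformation.
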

\begin{proof} Its proof is almost the repetition of the process of \cite[Lemma 2.5]{Liu-Zhang2023}, so we omit the details.

\end{proof}

The following proposition can be obtained directly from the above lemma.

\begin{proposition}\label{Pro:three-tildeJ}
Let $\bar{p}\leq p<4^*$ and assume that $c\in(0,c_0)$ for $p=\bar{p}$, where $c_0$ is given in (\ref{eqn:defn-c0}). Choose $\{g_n\}\subset\widetilde{\Gamma}(c)$ such that $\max\limits_{t\in[0,1]}\tilde{I}(g_n(t))\leq \widetilde{\gamma}_\mu(c)+\frac{1}{n}$, then there exists a sequence
$\{(v_n,s_n)\}\subset S_r(c)\times \mathbb{R}$ such that
\begin{itemize}
\item[(i)]  $\tilde{I}(v_n,s_n)\in\left[\widetilde{\gamma}_\mu(c)-\frac{1}{n},\widetilde{\gamma}_\mu(c)+\frac{1}{n}\right]$;
\item[(ii)] $\min\limits_{t\in[0,1]}\|(v_n,s_n)-g_n(t)\|_E\leq \frac{1}{\sqrt{n}}$;
\item[(iii)]$\|(\tilde{I}|_{S_r(c)\times\mathbb R})'(v_n,s_n)\|_{E'}\leq \frac{2}{\sqrt{n}}$, that is,
$$
|\langle \tilde{I}'(v_n,s_n),\omega \rangle_{E'\times E}|
\leq \frac{2}{\sqrt{n}}\|\omega\|_E
$$ for all $\omega \in \widetilde{T}_{(v_n,s_n)}:=\{(\omega_1,\omega_2)\in E,\langle v_n,\omega_1 \rangle_{L^2}=0\}$.
\end{itemize}
\end{proposition}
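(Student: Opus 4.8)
The plan is to deduce the proposition by a direct iteration of Lemma \ref{Lem:three-tildeJ-prove} with the choice $\varepsilon=\frac1n$. I would first record that the strict inequality in \eqref{eqn:gamma-mu-c-big-gamma-0-mu-c} gives $\delta:=\widetilde{\gamma}_\mu(c)-(\widetilde{\gamma}_\mu)_0(c)>0$, so there is an integer $n_0$ with $\frac1n<\delta$ for all $n\geq n_0$. Hence, for every $n\geq n_0$, the value $\varepsilon=\frac1n$ lies in the admissible interval $\bigl(0,\widetilde{\gamma}_\mu(c)-(\widetilde{\gamma}_\mu)_0(c)\bigr)$ required in the hypothesis of Lemma \ref{Lem:three-tildeJ-prove}.

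Next, for each fixed $n\geq n_0$ I would invoke the hypothesis on $g_n$: since $\max_{t\in[0,1]}\tilde{I}(g_n(t))\leq\widetilde{\gamma}_\mu(c)+\frac1n$, the path $g_n\in\widetilde{\Gamma}(c)$ satisfies exactly the requirement imposed on $g$ in Lemma \ref{Lem:three-tildeJ-prove} with $\varepsilon=\frac1n$. Applying that lemma then produces $(v_n,s_n)\in S_r(c)\times\mathbb{R}$ obeying items (i)--(iii) of Lemma \ref{Lem:three-tildeJ-prove} with $\varepsilon$ replaced by $\frac1n$, which are precisely items (i)--(iii) of the proposition. For the finitely many indices $n<n_0$ I would simply set $(v_n,s_n):=(v_{n_0},s_{n_0})$ (or discard them and relabel so that the index set starts at $n_0$); either way one ends up with a sequence $\{(v_n,s_n)\}\subset S_r(c)\times\mathbb{R}$ for which (i)--(iii) hold for all $n$.

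Finally I would unpack the meaning of (iii) to match the explicit reformulation in the statement. The constraint set $S_r(c)\times\mathbb{R}$ is a smooth codimension-one submanifold of $E$ (only the $H^2_r$-component is constrained, via $G(u,s)=\|u\|_2^2-c$), so its tangent space at $(v_n,s_n)$ is $\widetilde{T}_{(v_n,s_n)}=\{(\omega_1,\omega_2)\in E:\langle v_n,\omega_1\rangle_{L^2}=0\}$; by the very definition of the dual norm of the differential of the restriction, $$\bigl\|(\tilde{I}|_{S_r(c)\times\mathbb{R}})'(v_n,s_n)\bigr\|_{E'}=\sup\bigl\{\,|\langle\tilde{I}'(v_n,s_n),\omega\rangle_{E'\times E}|:\omega\in\widetilde{T}_{(v_n,s_n)},\ \|\omega\|_E\le1\,\bigr\}.$$ Thus the estimate $\|(\tilde{I}|_{S_r(c)\times\mathbb{R}})'(v_n,s_n)\|_{E'}\le\frac{2}{\sqrt n}$ from Lemma \ref{Lem:three-tildeJ-prove}(iii) yields $|\langle\tilde{I}'(v_n,s_n),\omega\rangle_{E'\times E}|\le\frac{2}{\sqrt n}\|\omega\|_E$ for all $\omega\in\widetilde{T}_{(v_n,s_n)}$, completing the deduction. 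Since the substantive work---the quantitative deformation argument underlying Lemma \ref{Lem:three-tildeJ-prove}, taken over from \cite{Liu-Zhang2023}---is already in place, the proof here is essentially bookkeeping; the only point that needs a moment's care is pinning down the threshold $n_0$ so that $\frac1n$ is an admissible perturbation parameter for the deformation lemma.
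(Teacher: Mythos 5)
Your proposal is correct and is essentially the paper's own argument: the paper states that the proposition "can be obtained directly from the above lemma," i.e., by applying Lemma \ref{Lem:three-tildeJ-prove} with $\varepsilon=\frac1n$, exactly as you do. Your extra care about the threshold $n_0$ (so that $\frac1n$ is an admissible parameter) and the unpacking of the dual norm in (iii) are harmless bookkeeping the paper leaves implicit; just note that for $n<n_0$ one should discard/relabel rather than reuse $(v_{n_0},s_{n_0})$, since item (ii) refers to the path $g_n$, not $g_{n_0}$.
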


\begin{lemma}\label{Lem:PS-true}
Let $\bar{p}\leq p<4^*$ and assume that $c\in(0,c_0)$ for $p=\bar{p}$, where $c_0$ is given in (\ref{eqn:defn-c0}). For the sequence $\{(v_n,s_n)\}$ obtained in Proposition \ref{Pro:three-tildeJ}, set $u_n:=\mathcal H(v_n,s_n)=e^{\frac{Ns_n}{2}}v_n(e^{s_n} x)$,
then there hold that
\begin{itemize}
\item[(i)]  $I(u_n)\to\gamma_\mu(c)$ as $n\to\infty$;
\item[(ii)] $\partial_s \tilde{I}(v_n,s_n)=2\|\Delta u_n\|^2_2-2\mu \gamma_p\|u_n\|^p_p-2\int_{\mathbb R^N}(I_\alpha*|u_n|^{4^*_\alpha})|u_n|^{4^*_\alpha}dx\to 0$
as $n\to\infty$;
\item[(iii)] $\|u_n\|_{H^2}$ and $\frac{\mu}{p}\|u_n\|^p_p+\frac{1}{24^*_\alpha}\int_{\mathbb R^N}(I_\alpha*|u_n|^{4^*_\alpha})|u_n|^{4^*_\alpha}dx$ are bounded;
\item[(iv)] $|\langle I'(u_n),\omega \rangle_{(H^2(\mathbb{R}^N))'\times H^2(\mathbb{R}^N)}|\leq \frac{2e^4}{\sqrt{n}}\|\omega\|_{H^2}$ for all
$\omega\in T_{u_n}\!\!:=\{\omega\in H^2(\mathbb R^N),\langle u_n,\omega \rangle_{L^2}=0\}$.
\end{itemize}
\end{lemma}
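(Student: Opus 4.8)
The plan is to read off (i) and (ii) almost directly from Proposition~\ref{Pro:three-tildeJ}, to derive the boundedness in (iii) by combining the energy identity with the Poho\v{z}aev functional, and to prove (iv) by transporting the constrained criticality of $\tilde I$ on $E$ to that of $I$ on $H^2(\mathbb R^N)$ through the dilation $\mathcal H$. I would abbreviate $A_n:=\|\Delta u_n\|_2^2$, $B_n:=\|u_n\|_p^p$, $C_n:=\int_{\mathbb R^N}(I_\alpha*|u_n|^{4^*_\alpha})|u_n|^{4^*_\alpha}\,dx$, and use repeatedly that by \eqref{eqn:Delta-mathcalH-norm}--\eqref{eqn:mathcalH-critical-term} passing from $(v_n,s_n)$ to $u_n=\mathcal H(v_n,s_n)$ rescales the three terms of $\tilde I$ by $e^{4s_n}$, $e^{2p\gamma_p s_n}$, $e^{44^*_\alpha s_n}$ while $\|u_n\|_2^2=\|v_n\|_2^2=c$. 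Then (i) is immediate: $I(u_n)=I(\mathcal H(v_n,s_n))=\tilde I(v_n,s_n)$ by \eqref{eqn:Defn-tilde-I}, so item (i) of Proposition~\ref{Pro:three-tildeJ} and Lemma~\ref{Lem:gamma-tildegamma-equal} give $I(u_n)\to\widetilde\gamma_\mu(c)=\gamma_\mu(c)$. For (ii), the computation behind \eqref{eqn:partialI-0-p} yields $\partial_s\tilde I(v_n,s_n)=2A_n-2\mu\gamma_pB_n-2C_n=2P(u_n)$; pairing the constrained derivative with the admissible direction $(0,1)\in\widetilde T_{(v_n,s_n)}$ (legitimate since $\langle v_n,0\rangle_{L^2}=0$, with $\|(0,1)\|_E=1$) and invoking item (iii) of Proposition~\ref{Pro:three-tildeJ} gives $|\partial_s\tilde I(v_n,s_n)|\le 2/\sqrt n\to0$, i.e. $P(u_n)\to0$.

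For (iii) I would use that $I(u_n)$ is bounded and $P(u_n)=o(1)$. If $\bar p<p<4^*$ then $2<p\gamma_p=\tfrac{N(p-2)}{4}<4^*<24^*_\alpha$, so in $I(u_n)-\tfrac{1}{p\gamma_p}P(u_n)=(\tfrac12-\tfrac{1}{p\gamma_p})A_n+(\tfrac{1}{p\gamma_p}-\tfrac{1}{24^*_\alpha})C_n$ the $B_n$-terms cancel and both remaining coefficients are strictly positive; since the left side equals $\gamma_\mu(c)+o(1)$ this bounds $A_n$ and $C_n$, after which \eqref{eqn:GNinequality} bounds $B_n$ and $\|u_n\|_2^2=c$ yields boundedness of $\|u_n\|_{H^2}$ and of $\tfrac{\mu}{p}B_n+\tfrac{1}{24^*_\alpha}C_n$. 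If $p=\bar p$ the same combination (now $p\gamma_p=2$) only bounds $C_n$; to recover $A_n$ I would use $P(u_n)=o(1)$ to write $A_n=\mu\gamma_{\bar p}B_n+C_n+o(1)$, bound $B_n\le C_{N,\bar p}^{\bar p}c^{4/N}A_n$ via \eqref{eqn:GNinequality} (recall $\bar p\gamma_{\bar p}=2$), and absorb: $c\in(0,c_0)$ forces $\mu\gamma_{\bar p}C_{N,\bar p}^{\bar p}c^{4/N}=\tfrac{2\mu}{\bar p}C_{N,\bar p}^{\bar p}c^{4/N}<\tfrac14$, so $\tfrac34A_n\le C_n+o(1)$ and $A_n$ is bounded; the remaining claims follow as above.

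For (iv), the core of the lemma, I would first record that in Proposition~\ref{Pro:three-tildeJ} the approximating paths $g_n$ may be taken of the form $(h_n,0)$ with $h_n\in\Gamma(c)$ — this is precisely the content of the identity $\widetilde\gamma_\mu(c)=\gamma_\mu(c)$ from Lemma~\ref{Lem:gamma-tildegamma-equal} — so that item (ii) there gives $|s_n|\le\min_{t}\|(v_n,s_n)-g_n(t)\|_E\le 1/\sqrt n\le 1$. Now fix $\omega\in T_{u_n}$ and set $\tilde\omega:=\mathcal H(\omega,-s_n)$, so that $\mathcal H(\tilde\omega,s_n)=\omega$ and, by a change of variables, $\langle v_n,\tilde\omega\rangle_{L^2}=\langle\mathcal H(v_n,s_n),\omega\rangle_{L^2}=\langle u_n,\omega\rangle_{L^2}=0$, whence $(\tilde\omega,0)\in\widetilde T_{(v_n,s_n)}$. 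Since $\mathcal H(\cdot,s)$ is linear, the chain rule for $\tilde I(\cdot,\cdot)=I(\mathcal H(\cdot,\cdot))$ gives $\langle\tilde I'(v_n,s_n),(\tilde\omega,0)\rangle_{E'\times E}=\langle I'(\mathcal H(v_n,s_n)),\mathcal H(\tilde\omega,s_n)\rangle=\langle I'(u_n),\omega\rangle$. Applying item (iii) of Proposition~\ref{Pro:three-tildeJ} and noting that, by \eqref{eqn:Delta-mathcalH-norm} and $|s_n|\le1$, $\|(\tilde\omega,0)\|_E=\|\tilde\omega\|_{H^2}=\bigl(e^{-4s_n}\|\Delta\omega\|_2^2+\|\omega\|_2^2\bigr)^{1/2}\le e^{4}\|\omega\|_{H^2}$, I obtain $|\langle I'(u_n),\omega\rangle|\le\tfrac{2e^4}{\sqrt n}\|\omega\|_{H^2}$, which is (iv).

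The routine steps are (i) and (ii). The genuine obstacle is (iv): one must make the chain rule through $\mathcal H$ precise, verify that $(0,1)$ and $(\tilde\omega,0)$ are genuinely tangent to $S_r(c)\times\mathbb R$ at $(v_n,s_n)$, and keep the dilation factor $e^{-4s_n}$ uniformly bounded, for which pinning down $|s_n|\le 1/\sqrt n$ from the shape of the admissible paths is essential. The only other delicate point is the $p=\bar p$ case of (iii), where the $L^2$-critical term is only linearly controlled by $A_n$ and boundedness is recovered exactly because $c<c_0$.
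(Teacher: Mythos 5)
Your proposal is correct and follows essentially the same route as the paper: (i) and (ii) read off from Proposition \ref{Pro:three-tildeJ} via $\tilde I(v_n,s_n)=I(u_n)$ and the tangent direction $(0,1)$, (iii) by combining the bounded energy with $P(u_n)=o_n(1)$ plus the Gagliardo--Nirenberg absorption under $c<c_0$ when $p=\bar p$, and (iv) by transporting test functions through $\tilde\omega=\mathcal H(\omega,-s_n)$, checking $(\tilde\omega,0)\in\widetilde T_{(v_n,s_n)}$, and using $|s_n|\le 1/\sqrt n$ (from paths of the form $((g_n)_1(t),0)$) to control the dilation factor. The only cosmetic difference is in (iii) for $\bar p<p<4^*$, where you cancel the $\|u_n\|_p^p$-term with the combination $I(u_n)-\tfrac{1}{p\gamma_p}P(u_n)$ instead of the paper's $N\tilde I+\partial_s\tilde I$; both are the same energy--Poho\v{z}aev idea and both work.
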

\begin{proof}
Observe that $I(u_n)=\tilde{I}(v_n,s_n)$ and $\widetilde{\gamma}_\mu(c)=\gamma_\mu(c)$, \emph{(i)} is a direct conclusion from Proposition \ref{Pro:three-tildeJ}-\emph{(i)}.

\emph{(ii)} Letting $\partial_s \tilde{I}(v_n,s_n)=\langle \tilde{I}'(v_n,s_n),(0,1)\rangle_{E'\times E}$, in view of Proposition \ref{Pro:three-tildeJ}-\emph{(iii)}, we see that
$$
\partial_s \tilde{I}(v_n,s_n)\to 0 \ \ \mbox{as}\ \  n\to \infty.
$$
More precisely, a simple calculation indicates that
$$
\partial_s \tilde{I}(v_n,s_n)=2\|\Delta u_n\|^2_2-2\mu \gamma_p\|u_n\|^p_p-2\int_{\mathbb R^N}(I_\alpha*|u_n|^{4^*_\alpha})|u_n|^{4^*_\alpha}dx\to 0\ \mbox{as}\ n\to \infty.
$$

\emph{(iii)} Note the boundedness of $\tilde{I}(v_n,s_n)$, it follows that
\begin{equation}\label{eqn:inrquality-2}
-C\leq\tilde{I}(v_n,s_n)=I(u_n)=\frac{1}{2}\|\Delta u_n\|^2_2-\left[\frac{\mu}{p}\|u_n\|^p_p+\frac{1}{24^*_\alpha}\int_{\mathbb R^N}(I_\alpha*|u_n|^{4^*_\alpha})|u_n|^{4^*_\alpha}dx\right]\leq C\\
\end{equation}
for some constant $C>0$. When $\bar{p}<p<4^*$, the boundedness of $\tilde{I}(v_n,s_n)$ and $\partial_s \tilde{I}(v_n,s_n)$ also brings that
(modifying $C>0$ if necessary)
 \begin{equation}\label{eqn:inrquality-1}
\begin{aligned}
-C&\leq N\tilde{I}(v_n,s_n)+\partial_s \tilde{I}(v_n,s_n)\\
&=\frac{N}{2}\|\Delta u_n\|^2_2-\frac{\mu N}{p}\|u_n\|^p_p-\frac{N}{24^*_\alpha}\int_{\mathbb R^N}(I_\alpha*|u_n|^{4^*_\alpha})|u_n|^{4^*_\alpha}dx\\
&\ \ \ +2\|\Delta u_n\|^2_2-2\mu \gamma_p\|u_n\|^p_p-2\int_{\mathbb R^N}(I_\alpha*|u_n|^{4^*_\alpha})|u_n|^{4^*_\alpha}dx\\
&=\frac{N}{2}\|\Delta u_n\|^2_2-\frac{\mu N}{p}\|u_n\|^p_p-\frac{N}{24^*_\alpha}\int_{\mathbb R^N}(I_\alpha*|u_n|^{4^*_\alpha})|u_n|^{4^*_\alpha}dx\\
&\ \ \ +2\|\Delta u_n\|^2_2-\mu N(\frac{1}{2}-\frac{1}{p})\|u_n\|^p_p-2\int_{\mathbb R^N}(I_\alpha*|u_n|^{4^*_\alpha})|u_n|^{4^*_\alpha}dx\\
&\leq\frac{N+4}{2}\|\Delta u_n\|^2_2-\frac{Np}{2}\left[\frac{\mu}{p}\|u_n\|^p_p+\frac{1}{24^*_\alpha}\int_{\mathbb R^N}(I_\alpha*|u_n|^{4^*_\alpha})|u_n|^{4^*_\alpha}dx\right],
\end{aligned}
\end{equation}
since $\frac{N}{24^*_\alpha}+2-\frac{Np}{44^*_\alpha}>0$. Combining (\ref{eqn:inrquality-2}) and (\ref{eqn:inrquality-1}), we deduce that
$$
(N+4-\frac{Np}{2})\left[\frac{\mu}{p}\|u_n\|^p_p+\frac{1}{24^*_\alpha}\int_{\mathbb R^N}(I_\alpha*|u_n|^{4^*_\alpha})|u_n|^{4^*_\alpha}dx\right]\geq -(N+5)C.
$$
Since $2+\frac{8}{N}<p<4^*$, the above inequality ensures the boundedness of $\frac{\mu}{p}\|u_n\|^p_p+\frac{1}{24^*_\alpha}\int_{\mathbb R^N}(I_\alpha*|u_n|^{4^*_\alpha})|u_n|^{4^*_\alpha}dx$ and consequently $\|\Delta u_n\|^2_2$ is also.

For the case $p=\bar{p}$, by $(ii)$, we know that
$$
\|\Delta u_n\|^2_2-\mu \gamma_{\bar {p}}\|u_n\|^{\bar {p}}_{\bar{p}}-\int_{\mathbb R^N}(I_\alpha*|u_n|^{4^*_\alpha})|u_n|^{4^*_\alpha}dx=o_n(1).
$$
Together this with (\ref{eqn:inrquality-2}) and (\ref{eqn:GNinequality}), we deduce that
$$
\begin{aligned}
C&\geq\frac{1}{2}\|\Delta u_n\|^2_2-\frac{\mu}{\bar{p}}\|u_n\|^{\bar{p}}_{\bar{p}}-\frac{1}{24^*_\alpha}\int_{\mathbb R^N}(I_\alpha*|u_n|^{4^*_\alpha})|u_n|^{4^*_\alpha}dx\\
&=\Big(\frac{1}{2}-\frac{1}{24^*_\alpha}\Big)\|\Delta u_n\|^2_2-\Big(\frac{1}{2}-\frac{1}{24^*_\alpha}\Big)\frac{2\mu}{\bar{p}}\|u_n\|^{\bar{p}}_{\bar{p}}+o_n(1)\\
&\geq \Big(\frac{1}{2}-\frac{1}{24^*_\alpha}\Big)\|\Delta u_n\|^2_2-\Big(\frac{1}{2}-\frac{1}{24^*_\alpha}\Big)\frac{2\mu}{\bar{p}}c^\frac{4}{N}C^{\bar{p}}_{N,\bar{p}}\|\Delta u_n\|^2_2+o_n(1)\\
&= \Big(\frac{1}{2}-\frac{1}{24^*_\alpha}\Big)\Big(1-\frac{2\mu}{\bar{p}}c^\frac{4}{N}C^{\bar{p}}_{N,\bar{p}}\Big)\|\Delta u_n\|^2_2+o_n(1),
\end{aligned}
$$
which implies that $\{u_n\}$ is bounded in $H^2(\mathbb R^N)$. Thus, the boundedness of $\frac{\mu}{\bar{p}}\|u_n\|^{\bar{p}}_{\bar{p}}+\frac{1}{24^*_\alpha}\int_{\mathbb R^N}(I_\alpha*|u_n|^{4^*_\alpha})|u_n|^{4^*_\alpha}dx$ are obtained from (\ref{eqn:inrquality-2}).

\emph{(iv)} For $h_n\in T_{u_n}$ and $\tilde{h}_n(x):=e^{-\frac{Ns_n}{2}}h_n(e^{-s_n}x)$, we have
$$
\begin{aligned}
\langle &I'(u_n),h_n \rangle_{(H^2)'\times H^2}\\
&=e^\frac{(4+N)s_n}{2}\int_{\mathbb R^N}\!\!\!\!\!\!\Delta v_n(e^{s_n}x) \Delta h_n(x) dx-\mu e^\frac{N(p-1)s_n}{2}\int_{\mathbb R^N}|v_n(e^{s_n}x)|^{p-2}v_n(e^{s_n}x) h_n(x)dx\\
&\ \ \ - e^\frac{N(24^*_\alpha-1)s_n}{2}\int_{\mathbb R^N}(I_\alpha*|v_n(e^{s_n}y)|^{4^*_\alpha})|v_n(e^{s_n}x)|^{4^*_\alpha-2}v_n(e^{s_n}x) h_n(x)dx\\
&=\!e^{4s_n}\!\!\int_{\mathbb R^N}\!\!\!\!\!\!\!\Delta v_n(x) \Delta \Big(h_n\!(e^{-s_n}x)e^{-\frac{Ns_n}{2}}\Big)dx\!-\!\!\mu e^\frac{N(p-2)s_n}{2}\!\!\!\!
\int_{\mathbb R^N}\!\!\!\!\!\!|v_n(x)|^{p-2}\!v_n(x) h_n\!(e^{-s_n}x)e^{-\frac{Ns_n}{2}}\!dx\\
&\ \ \ - e^{44^*_\alpha s_n}\int_{\mathbb R^N}(I_\alpha*|v_n(y)|^{4^*_\alpha})|v_n(x)|^{4^*_\alpha-2}v_n(x) h_n(e^{-s_n}x)e^{-\frac{Ns_n}{2}}dx\\
&=\!e^{4s_n}\!\!\int_{\mathbb R^N}\!\!\!\!\!\!\!\Delta v_n(x) \Delta \Big(h_n\!(e^{-s_n}x)e^{-\frac{Ns_n}{2}}\Big)dx\!-\!\!\mu e^{2p\gamma_p s_n}\!\!\!\!
\int_{\mathbb R^N}\!\!\!\!\!\!|v_n(x)|^{p-2}\!v_n(x) h_n\!(e^{-s_n}x)e^{-\frac{Ns_n}{2}}\!dx\\
&\ \ \ - e^{44^*_\alpha s_n}\int_{\mathbb R^N}(I_\alpha*|v_n(y)|^{4^*_\alpha})|v_n(x)|^{4^*_\alpha-2}v_n(x) h_n(e^{-s_n}x)e^{-\frac{Ns_n}{2}}dx\\
&=\langle \tilde{I}'(v_n,s_n),(\tilde{h}_n,0) \rangle_{E'\times E}.\\
\end{aligned}
$$
In addition, since $h_n\in T_{u_n}$, that is, $\langle u_n,h_n \rangle_{L^2}=0$, it gives that
$$
0=\int_{\mathbb R^N}e^\frac{Ns_n}{2}v_n(e^{s_n}x)h_n(x)dx=\int_{\mathbb R^N}v_n(x)e^{-\frac{Ns_n}{2}}h_n(e^{-s_n}x)dx=\langle v_n, \tilde{h}_n\rangle_{L^2}.
$$
Hence, $(\tilde{h}_n,0)\in \widetilde{T}_{(v_n,s_n)}$, where $\widetilde{T}_{(v_n,s_n)}$ is given in Proposition \ref{Pro:three-tildeJ}. Using this fact and Proposition
\ref{Pro:three-tildeJ}-\emph{(iii)}, we see that
$$
|\langle I'(u_n),h_n \rangle_{(H^2)'\times H^2}|=|\langle \tilde{I}'(v_n,s_n),(\tilde{h}_n,0) \rangle_{E'\times E}|\leq \frac{2}{\sqrt{n}}\|(\tilde{h}_n,0)\|_{E}.
$$

Therefore, it only remains to verify that $\|(\tilde{h}_n,0)\|_{E}\leq e^4\|h_n\|_{H^2}$ for all $n$. Indeed, since $\widetilde{\gamma}_\mu(c)=\gamma_\mu(c)$, there exists
$\{g_n\}\subset \widetilde{\Gamma}(c)$, being of the form $g_n(t)=((g_n)_1(t),0)\in E$, $\forall t\in[0,1]$, such that
$$
\max\limits_{t\in[0,1]}\tilde{I}(g_n(t))\in[\gamma_\mu(c)
-\frac{1}{n},\gamma_\mu(c)+\frac{1}{n}].
$$
Then, by Proposition \ref{Pro:three-tildeJ}-\emph{(ii)}, it brings that
$$
|s_n|^2=|s_n-0|^2\leq\min\limits_{t\in[0,1]}\|(v_n,s_n)-g_n(t)\|^2_E\leq\frac{1}{n}.
$$
Consequently, we obtain that
$$
\|(\tilde{h}_n,0)\|^2_{E}=\|\tilde{h}_n\|^2_{H^2}=e^{-4s_n}\int_{\mathbb R^N}|\Delta h_n(x)|^2dx+\int_{\mathbb R^N}|h_n(x)|^2dx\leq e^4\|h_n\|^2_{H^2}.
$$
\end{proof}

Next, we make some further discussions about the sequence $\{u_n\}$ appearing in Lemma \ref{Lem:PS-true}, when the corresponding mountain pass level $\gamma_\mu(c)$ is restricted to one reasonable scope.

\begin{lemma}\label{pro:solution-either-or-true}
Let $N\geq 5$, $\bar{p}\leq p<4^*$ and $c,\mu>0$. In particular, assume that $c\in(0,c_0)$ when $p=\bar{p}$. Suppose that $\{u_n\}\subset S_r(c)$ is the sequence given in Lemma \ref{Lem:PS-true} with
$0<\gamma_\mu(c)<\frac{4+\alpha}{2(N+\alpha)}S^\frac{N+\alpha}{4+\alpha}_\alpha$, where $S_\alpha$ and $c_0$ are given in (\ref{eqn:S-alpha-define}) and (\ref{eqn:defn-c0}), respectively. Then one of the following alternatives holds:
\begin{itemize}
\item[(i)] either up to a subsequence $u_n\rightharpoonup \bar{u}$ in $H^2_{r}({\mathbb R^N})$ but not strongly, where $\bar{u}\neq0$ is a solution to
\begin{equation}\label{eqn:equation-lambdac-solution}
{\Delta}^2u= \lambda u +\mu |u|^{p-2}u+(I_\alpha*|u|^{4^*_\alpha})|u|^{4^*_\alpha-2}u\ \ \mbox{in}\ \ \mathbb{R}^N
\end{equation}
with $\lambda_n\to \bar{\lambda}$ for some $\bar{\lambda}<0$ and $I(\bar{u})\leq\gamma_\mu(c)-\frac{4+\alpha}{2(N+\alpha)}S^\frac{N+\alpha}{4+\alpha}_\alpha$, where $\lambda_n$ is given in
(\ref{eqn:lambda-n-define}) below;
\item[(ii)] or up to a subsequence $u_n\to \bar{u}$ strongly in $H^2_{r}(\mathbb{R}^N)$, $I(\bar{u})=\gamma_\mu(c)$ and $u$ solves problem (\ref{eqn:BS-equation-L2-Super+Critical}) for some $\bar{\lambda}<0$.
\end{itemize}
\end{lemma}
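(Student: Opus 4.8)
The idea is to run a Brezis--Lieb / concentration-compactness analysis on the bounded Palais--Smale sequence $\{u_n\}$ furnished by Lemma~\ref{Lem:PS-true}, using the sharp constant $S_\alpha$ in (\ref{eqn:S-alpha-define}) to measure the possible loss of compactness. By Lemma~\ref{Lem:PS-true}\,(iii) the sequence is bounded in $H^2_r(\mathbb R^N)$, so after passing to a subsequence $u_n\rightharpoonup\bar u$ in $H^2_r(\mathbb R^N)$, $u_n\to\bar u$ in $L^q(\mathbb R^N)$ for every $q\in(2,4^*)$ (the radial embedding being compact in this range since $N\ge5$), and $u_n\to\bar u$ a.e.\ in $\mathbb R^N$. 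Next I would produce the Lagrange multiplier: set $\lambda_n:=\frac1c\langle I'(u_n),u_n\rangle$, write an arbitrary $\phi\in H^2_r(\mathbb R^N)$ as $\phi=\omega+\frac{\langle\phi,u_n\rangle_{L^2}}{c}u_n$ with $\omega\in T_{u_n}$, and combine Lemma~\ref{Lem:PS-true}\,(iv) with the boundedness of $\{u_n\}$ to obtain $I'(u_n)-\lambda_nu_n\to0$ in $(H^2_r(\mathbb R^N))'$. Since $\lambda_nc=P(u_n)+\mu(\gamma_p-1)\|u_n\|_p^p$ with $P(u_n)\to0$ by Lemma~\ref{Lem:PS-true}\,(ii), while $\gamma_p<1$ for $\bar p\le p<4^*$ and $\|u_n\|_p^p\to\|\bar u\|_p^p$, the sequence $\{\lambda_n\}$ is bounded and $\lambda_n\to\bar\lambda=\frac{\mu(\gamma_p-1)}{c}\|\bar u\|_p^p\le0$.

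Passing to the limit in $I'(u_n)-\lambda_nu_n\to0$ shows that $\bar u$ is a weak solution of (\ref{eqn:equation-lambdac-solution}) with this $\bar\lambda$; here the perturbation term passes to the limit by the strong $L^p$ convergence, and the critical Hartree term converges weakly in $(H^2_r(\mathbb R^N))'$ by the Hardy--Littlewood--Sobolev inequality together with the a.e.\ convergence, in the usual manner for critical Choquard-type nonlinearities. To rule out $\bar u=0$ I argue by contradiction: if $\bar u=0$ then $\|u_n\|_p\to0$, so $P(u_n)\to0$ forces $\|\Delta u_n\|_2^2-\int_{\mathbb R^N}(I_\alpha*|u_n|^{4^*_\alpha})|u_n|^{4^*_\alpha}\,dx\to0$; writing $\ell:=\lim\|\Delta u_n\|_2^2\ge0$, if $\ell=0$ then $I(u_n)\to0$, contradicting $\gamma_\mu(c)>0$, whereas if $\ell>0$ then (\ref{eqn:S-alpha-define}) gives $\ell\ge S_\alpha^{\frac{N+\alpha}{4+\alpha}}$, hence $\gamma_\mu(c)=\lim I(u_n)=\bigl(\frac12-\frac{1}{24^*_\alpha}\bigr)\ell\ge\frac{4+\alpha}{2(N+\alpha)}S_\alpha^{\frac{N+\alpha}{4+\alpha}}$, contradicting the hypothesis. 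Therefore $\bar u\ne0$, whence $\|\bar u\|_p>0$ and $\bar\lambda=\frac{\mu(\gamma_p-1)}{c}\|\bar u\|_p^p<0$; moreover, being a weak solution of (\ref{eqn:equation-lambdac-solution}), the function $\bar u$ satisfies the Poho\v{z}aev identity $P(\bar u)=0$ (cf.\ \cite[Page~4]{Chen-chen2023}).

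Now put $w_n:=u_n-\bar u\rightharpoonup0$; then $\|w_n\|_p\to0$, and by the Brezis--Lieb lemma for the $H^2$-norm together with its nonlocal analogue for $\int(I_\alpha*|\cdot|^{4^*_\alpha})|\cdot|^{4^*_\alpha}$ one gets
$$I(u_n)=I(\bar u)+\frac12\|\Delta w_n\|_2^2-\frac{1}{24^*_\alpha}\int_{\mathbb R^N}(I_\alpha*|w_n|^{4^*_\alpha})|w_n|^{4^*_\alpha}\,dx+o_n(1),$$
and from $P(u_n)\to0=P(\bar u)$ the relation $\|\Delta w_n\|_2^2-\int_{\mathbb R^N}(I_\alpha*|w_n|^{4^*_\alpha})|w_n|^{4^*_\alpha}\,dx\to0$. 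Testing $I'(u_n)-\lambda_nu_n\to0$ against $u_n-\bar u$ (using the strong $L^p$ convergence, the splitting of the nonlocal term, and $\langle u_n,u_n-\bar u\rangle_{L^2}\to\lim\|w_n\|_2^2$) yields $\|\Delta w_n\|_2^2-\int_{\mathbb R^N}(I_\alpha*|w_n|^{4^*_\alpha})|w_n|^{4^*_\alpha}\,dx-\bar\lambda\|w_n\|_2^2\to0$; combining with the previous relation gives $\bar\lambda\lim\|w_n\|_2^2=0$, so $\|w_n\|_2\to0$ since $\bar\lambda<0$. Set $\ell:=\lim\|\Delta w_n\|_2^2$. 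If $\ell=0$ then $\|w_n\|_{H^2}\to0$, i.e.\ $u_n\to\bar u$ strongly; hence $\bar u\in S_r(c)$, $I(\bar u)=\gamma_\mu(c)$, and $\bar u$ solves (\ref{eqn:BS-equation-L2-Super+Critical}) with $\bar\lambda<0$, which is alternative (ii). If $\ell>0$ then $u_n\not\to\bar u$ strongly, (\ref{eqn:S-alpha-define}) gives $\ell\ge S_\alpha^{\frac{N+\alpha}{4+\alpha}}$, and therefore $I(\bar u)=\gamma_\mu(c)-\bigl(\frac12-\frac{1}{24^*_\alpha}\bigr)\ell\le\gamma_\mu(c)-\frac{4+\alpha}{2(N+\alpha)}S_\alpha^{\frac{N+\alpha}{4+\alpha}}$, which is alternative (i).

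The delicate point, and the one I expect to be the main obstacle, is the rigorous handling of the nonlocal critical term: proving $(I_\alpha*|u_n|^{4^*_\alpha})|u_n|^{4^*_\alpha-2}u_n\rightharpoonup(I_\alpha*|\bar u|^{4^*_\alpha})|\bar u|^{4^*_\alpha-2}\bar u$ in $(H^2_r(\mathbb R^N))'$ and, above all, the Brezis--Lieb splitting $\int(I_\alpha*|u_n|^{4^*_\alpha})|u_n|^{4^*_\alpha}=\int(I_\alpha*|\bar u|^{4^*_\alpha})|\bar u|^{4^*_\alpha}+\int(I_\alpha*|w_n|^{4^*_\alpha})|w_n|^{4^*_\alpha}+o_n(1)$ together with the companion identity $\int(I_\alpha*|u_n|^{4^*_\alpha})|u_n|^{4^*_\alpha-2}u_n\bar u\to\int(I_\alpha*|\bar u|^{4^*_\alpha})|\bar u|^{4^*_\alpha}$, all of which rest on the Hardy--Littlewood--Sobolev inequality combined with an a.e.-convergence argument. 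Once these nonlocal identities are in hand, the remaining work is routine bookkeeping with the Poho\v{z}aev relation and the elementary identity $\frac12-\frac{1}{24^*_\alpha}=\frac{4+\alpha}{2(N+\alpha)}$.
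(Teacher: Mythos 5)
Your proposal is correct and follows essentially the same route as the paper: Lagrange multipliers extracted from the \emph{(PS)} information of Lemma \ref{Lem:PS-true}, compact radial embedding, exclusion of vanishing via $S_\alpha$ and the level bound, the Poho\v{z}aev identity $P(\bar u)=0$, the Br\'ezis--Lieb splitting of the critical Hartree term, the dichotomy $\ell=0$ or $\ell\ge S_\alpha^{\frac{N+\alpha}{4+\alpha}}$, and recovery of $L^2$-convergence from $\bar\lambda<0$. The only differences are cosmetic (you build $\lambda_n$ by hand rather than citing \cite[Proposition 5.12]{Willem1996}, and you test only the equation for $u_n$ against $w_n$ instead of subtracting the limit equation), and the nonlocal convergence facts you flag as delicate are exactly the ones the paper also takes from \cite{Chen-chen2023} rather than proving.
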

\begin{proof}
For the sequence $\{u_n=\mathcal{H}(v_n,s_n)\}$ determined in Lemma \ref{Lem:PS-true},
making use of \cite[Proposition 5.12]{Willem1996}, we know that there exists $\{\lambda_n\}\subset \mathbb R$ such that
$$
I'(u_n)-\lambda_n\Psi'(u_n)\to 0\ \ \ \mbox{as}\ \ \ n\to\infty,
$$
where $\Psi:H^2_{r}(\mathbb{R}^N)\to \mathbb{R}$ is given by
$$
\Psi(u)=\frac{1}{2}\int_{\mathbb{R}^N}|u|^2dx.
$$
As a consequence, it infers that
\begin{equation}\label{eqn:Jun-varphi}
\int_{\mathbb R^N}[\Delta u_n \Delta \varphi -\lambda_n u_n \varphi-\mu|u_n|^{p-2}u_n\varphi-(I_\alpha*|u_n|^{4^*_\alpha})|u_n|^{4^*_\alpha-2}u_n\varphi] dx=o_n(1)\|\varphi\|, \forall \varphi\in H_r^2({\mathbb R^N}).
\end{equation}
Due to the boundedness of $\{u_n\}$ from Lemma \ref{Lem:PS-true}-\emph{(iii)}, we see that
\begin{equation}\label{eqn:Jun-un-on1}
\|\Delta u_n\|^2_2-\lambda_n\|u_n\|^2_2-\mu\|u_n\|^p_p-\int_{\mathbb R^N}(I_\alpha*|u_n|^{4^*_\alpha})|u_n|^{4^*_\alpha}dx=o_n(1).
\end{equation}
Accordingly, the Lagrange multiplier $\lambda_n$ satisfies
\begin{equation}\label{eqn:lambda-n-define}
\lambda_n=\frac{1}{c}\left\{\|\Delta u_n\|^2_2-\mu\|u_n\|^p_p-\int_{\mathbb R^N}(I_\alpha*|u_n|^{4^*_\alpha})|u_n|^{4^*_\alpha}dx\right\}+o_n(1),
\end{equation}
which indicates that $\{\lambda_n\}$ is also a bounded sequence. Meanwhile, take into account the boundedness of $\{u_n\}$ and the compactness of
$H^2_{r}({\mathbb R^N})\hookrightarrow L^p({\mathbb R^N})$, there exists $\bar{u}\in H^2_{r}({\mathbb R^N})$ such that, up to a subsequence if necessary,
\begin{equation}\label{eqn:un-uc-H2-Lp-RN}
u_n \rightharpoonup \bar{u}\ \ \mbox{in}\ \ H^2_{r}(\mathbb{R}^N),\ \ u_n\to \bar{u} \ \ \mbox{in}\ \ L^p(\mathbb{R}^N)\ \ \mbox{and}\ \ u_n(x)\to \bar{u}(x) \ \ \mbox{a.e. in}\ \ \mathbb{R}^N.
\end{equation}
Let $w_n:=u_n-\bar{u}$, then (\ref{eqn:un-uc-H2-Lp-RN}) states that
\begin{equation}\label{eqn:wn-H2-Lp-RN}
w_n \rightharpoonup 0\ \ \mbox{in}\ \ H^2_{r}(\mathbb{R}^N),\ \ w_n\to 0 \ \ \mbox{in}\ \ L^p(\mathbb{R}^N)\ \ \mbox{and}\ \ w_n(x)\to 0 \ \ \mbox{a.e. in}\ \ \mathbb{R}^N.
\end{equation}

Since $\{\lambda_n\}$ is bounded, we are allowed to choose a subsequence such that $\lambda_n\to \bar{\lambda}\in\mathbb R$. By means of (\ref{eqn:Jun-un-on1}), $\partial_s\tilde{I}(v_n,s_n)\to 0$, $u_n\to \bar{u} \ \ \mbox{in}\ \ L^p(\mathbb{R}^N)$ and $\gamma_p=\frac{N}{2}(\frac{1}{2}-\frac{1}{p})<1$, we deduce that
\begin{equation}\label{eqn:lambdac-leq-0}
\begin{aligned}
\bar{\lambda} c&=\lim\limits_{n\to\infty}\lambda_n\|u_n\|^2_2\\
&=\lim\limits_{n\to\infty}\Big(\|\Delta u_n\|^2_2-\mu\|u_n\|^p_p-\int_{\mathbb R^N}(I_\alpha*|u_n|^{4^*_\alpha})|u_n|^{4^*_\alpha}dx\Big)\\
&=\lim\limits_{n\to\infty}\mu(\gamma_p-1)\|u_n\|^p_p\\
&=\mu(\gamma_p-1)\|\bar{u}\|^p_p\\
&\leq 0.
\end{aligned}
\end{equation}
We next claim that the weak limit $\bar{u}$ does not vanish identically. Suppose by contradiction that $\bar{u}=0$.
Since $\{u_n\}$ is bounded in $H^2(\mathbb R^N)$, up to a subsequence we can assume that $\|\Delta u_n\|^2_2\to l\geq0$. Recalling that $\partial_s\tilde{I}(v_n,s_n)\to 0$
and $u_n\to 0$ in $L^p(\mathbb R^N)$, we have
$$
0\leq\int_{\mathbb R^N}(I_\alpha*|u_n|^{4^*_\alpha})|u_n|^{4^*_\alpha}dx=\|\Delta u_n\|^2_2-\mu\gamma_p\|u_n\|^p_p+o_n(1)\to l.
$$
Therefore, in light of (\ref{eqn:S-alpha-define}), it gives that $l\geq S_\alpha l^\frac{1}{4^*_\alpha}$, which implies that either $l\geq S^\frac{N+\alpha}{4+\alpha}_\alpha$ or $l=0$. If $l\geq S^\frac{N+\alpha}{4+\alpha}_\alpha$, we infer that
$$
\begin{aligned}
\gamma_\mu(c)+o_n(1)=I(u_n)&=\frac{1}{2}\|\Delta u_n\|^2_2-\frac{\mu}{p}\|u_n\|^p_p-\frac{1}{24^*_\alpha}\int_{\mathbb R^N}(I_\alpha*|u_n|^{4^*_\alpha})|u_n|^{4^*_\alpha}dx\\
&=\left(\frac{1}{2}-\frac{1}{24^*_\alpha}\right)\|\Delta u_n\|^2_2-\mu\left(\frac{1}{p}-\frac{\gamma_p}{24^*_\alpha}\right)\|u_n\|^p_p+o_n(1)\\
&=\frac{4+\alpha}{2(N+\alpha)}\|\Delta u_n\|^2_2-\mu\left(\frac{1}{p}-\frac{\gamma_p}{24^*_\alpha}\right)\|u_n\|^p_p+o_n(1)\\
&\to \frac{4+\alpha}{2(N+\alpha)}l,
\end{aligned}
$$
since $I(u_n)\to \gamma_\mu(c)$ and Lemma \ref{Lem:PS-true}-\emph{(ii)}. Hence, one has
$$
\gamma_\mu(c)=\frac{4+\alpha}{2(N+\alpha)}l\geq\frac{4+\alpha}{2(N+\alpha)}S^\frac{N+\alpha}{4+\alpha}_\alpha,
$$
which contradicts with the assumption $\gamma_\mu(c)<\frac{4+\alpha}{2(N+\alpha)}S^\frac{N+\alpha}{4+\alpha}_\alpha$. When $l=0$, we see that
$$
\|u_n\|_p\to 0,\ \ \ \|\Delta u_n\|_2\to 0\ \ \ \mbox{and}\ \ \  \int_{\mathbb R^N}(I_\alpha*|u_n|^{4^*_\alpha})|u_n|^{4^*_\alpha}dx\to 0\ \ \ \mbox{as}\ \ \ n\to\infty,
$$
which brings that $I(u_n)\to 0=\gamma_\mu(c)$, an obvious contradiction. Hence, the weak limit $\bar{u}$ does not vanish identically, and
it follows from (\ref{eqn:lambdac-leq-0}) that $\bar{\lambda}<0$. Passing to the limit in (\ref{eqn:Jun-varphi}) by weak convergence, we see that $\bar{u}$ is a weak solution of
\begin{equation}\label{eqn:equation-1}
\Delta^2u=\bar{\lambda} u+\mu|u|^{p-2}u+(I_\alpha*|u|^{4^*_\alpha})|u|^{4^*_\alpha-2}u\ \ \mbox{in}\ \ H_r^{-2}(\mathbb R^N)
\end{equation}
Moreover, we know that $\bar{u}$ also satisfies the following identify
{\color{red}{\begin{equation}\label{eqn:Pohozaev-identify-baru}
\begin{aligned}
P(\bar{u})&=\int_{\mathbb R^N}|\Delta \bar{u}|^2dx+\mu \gamma_p\int_{\mathbb R^N}|\bar{u}|^pdx+\int_{\mathbb R^N}(I_\alpha*|\bar{u}|^{4^*_\alpha})|\bar{u}|^{4^*_\alpha}dx=0.
\end{aligned}
\end{equation}}}

By virtue of (\ref{eqn:wn-H2-Lp-RN}), the Br\'{e}zis-Lieb Lemma (\!\!\cite[Lemma 1.32]{Willem1996}) and \cite[(3.11)]{Chen-chen2023} ensure that
\begin{equation}\label{eqn:unuo(1)}
\int_{\mathbb{R}^N}|u_n|^2dx=\int_{\mathbb{R}^N}|w_n|^2dx+\int_{\mathbb{R}^N}|\bar{u}|^2dx+o_n(1),
\end{equation}
\begin{equation}\label{eqn:unpupo(1)}
\int_{\mathbb{R}^N}|u_n|^pdx=\int_{\mathbb{R}^N}|w_n|^pdx+\int_{\mathbb{R}^N}|\bar{u}|^pdx+o_n(1)
\end{equation}
and
\begin{equation}\label{eqn:un4-wnp-uc4}
\int_{\mathbb R^N}(I_\alpha*|u_n|^{4^*_\alpha})|u_n|^{4^*_\alpha}dx=\int_{\mathbb R^N}(I_\alpha*|w_n|^{4^*_\alpha})|w_n|^{4^*_\alpha}dx+\int_{\mathbb R^N}(I_\alpha*|\bar{u}|^{4^*_\alpha})|\bar{u}|^{4^*_\alpha}dx+o_n(1).
\end{equation}
In addition, recall that $w_n=u_n-\bar{u}\rightharpoonup 0$ in $H^2_{r}(\mathbb{R}^N)$ and together with (\ref{eqn:unuo(1)}), it also indicates that
\begin{equation}\label{eqn:DunDuo(1)}
\int_{\mathbb{R}^N}|\Delta u_n|^2dx=\int_{\mathbb{R}^N}|\Delta w_n|^2dx+\int_{\mathbb{R}^N}|\Delta \bar{u}|^2dx+o_n(1).
\end{equation}
Therefore, based on Lemma \ref{Lem:PS-true}-\emph{(ii)} and $u_n\to \bar{u}$ in $L^p(\mathbb R^N)$, we deduce from (\ref{eqn:un4-wnp-uc4}) and
(\ref{eqn:DunDuo(1)}) that
$$
\begin{aligned}
\|\Delta \bar{u}\|^2_2+\|\Delta w_n\|^2_2&=\|\Delta u_n\|^2_2+o_n(1)\\
&=\mu \gamma_p\|u_n\|^p_p+\int_{\mathbb R^N}(I_\alpha*|u_n|^{4^*_\alpha})|u_n|^{4^*_\alpha}dx+o_n(1)\\
&=\mu \gamma_p\|\bar{u}\|^p_p+\int_{\mathbb R^N}(I_\alpha*|w_n|^{4^*_\alpha})|w_n|^{4^*_\alpha}dx+\int_{\mathbb R^N}(I_\alpha*|\bar{u}|^{4^*_\alpha})|\bar{u}|^{4^*_\alpha}dx+o_n(1).
\end{aligned}
$$
Furthermore, according to (\ref{eqn:Pohozaev-identify-baru}), there holds that
$$
\|\Delta w_n\|^2_2=\int_{\mathbb R^N}(I_\alpha*|w_n|^{4^*_\alpha})|w_n|^{4^*_\alpha}dx+o_n(1).
$$
So, up to a subsequence, it can be assumed that
\begin{equation}\label{eqn:Deltawn-wn4*-l}
\lim\limits_{n\to\infty}\|\Delta w_n\|^2_2=\lim\limits_{n\to\infty}\int_{\mathbb R^N}(I_\alpha*|w_n|^{4^*_\alpha})|w_n|^{4^*_\alpha}dx=l\geq 0.
\end{equation}
Using (\ref{eqn:S-alpha-define}) again, we have $l\geq S_\alpha l^\frac{1}{4^*_\alpha}$, that is, either $l=0$ or $l\geq S^\frac{N+\alpha}{4+\alpha}_\alpha$.

If $l\geq S^\frac{N+\alpha}{4+\alpha}_\alpha$, (\ref{eqn:unpupo(1)})-(\ref{eqn:Deltawn-wn4*-l}) guarantee that
$$
\begin{aligned}
\gamma_\mu(c)=\lim\limits_{n\to\infty}I(u_n)&=\lim\limits_{n\to\infty}\Big(I(\bar{u})+\frac{1}{2}\|\Delta w_n\|^2_2-\frac{1}{24^*_\alpha}\int_{\mathbb R^N}(I_\alpha*|w_n|^{4^*_\alpha})|w_n|^{4^*_\alpha}dx\Big)\\
&=I(\bar{u})+\Big(\frac{1}{2}-\frac{1}{24^*_\alpha}\Big)l\\
&=I(\bar{u})+\frac{4+\alpha}{2(N+\alpha)}l\\
&\geq I(\bar{u})+\frac{4+\alpha}{2(N+\alpha)}S^\frac{N+\alpha}{4+\alpha}_\alpha,
\end{aligned}
$$
whence alternative $(i)$ in the thesis of this lemma follows. If instead $l=0$, we need to show that $u_n\to \bar{u}$ in $H^2_r(\mathbb R^N)$. From (\ref{eqn:Deltawn-wn4*-l}), we have known that
\begin{equation}\label{eqn:wn-un-2-4*}
\lim\limits_{n\to\infty}\|\Delta w_n\|^2_2=\lim\limits_{n\to\infty}\int_{\mathbb R^N}(I_\alpha*|w_n|^{4^*_\alpha})|w_n|^{4^*_\alpha}dx=0.
\end{equation}
Hence, it remains to verify that $u_n\to \bar{u}$ in $L^2(\mathbb R^N)$. To this end, test (\ref{eqn:Jun-varphi}) and (\ref{eqn:equation-1}) with $w_n=u_n-\bar{u}$, it yields that
\begin{equation}\label{eqn:un-uc-lambda=Lp-4}
\begin{aligned}
\int_{\mathbb R^N}&[|\Delta(u_n-\bar{u})|^2-(\lambda_n u_n-\bar{\lambda} \bar{u})(u_n-\bar{u})]dx\\
&=\mu\int_{\mathbb R^N}(|u_n|^{p-2}u_n-|\bar{u}|^{p-2}\bar{u})(u_n-\bar{u})dx\\
&\ \ \ +\int_{\mathbb R^N}\Big((I_\alpha*|u_n|^{4^*_\alpha})|u_n|^{4^*_\alpha-2}u_n-(I_\alpha*|\bar{u}|^{4^*_\alpha})|\bar{u}|^{4^*_\alpha-2}\bar{u}\Big)(u_n-\bar{u})dx+o_n(1).
\end{aligned}
\end{equation}
Observing that $u_n\to \bar{u}$ in $L^p(\mathbb R^N)$ from (\ref{eqn:un-uc-H2-Lp-RN}), we derive that
\begin{equation}\label{eqn:un-p2-un-uc-0}
%\begin{aligned}
\left|\int_{\mathbb R^N}(|u_n|^{p-2}u_n-|\bar{u}|^{p-2}\bar{u})(u_n-\bar{u})dx\right|\leq\||u_n|^{p-2}u_n-|\bar{u}|^{p-2}\bar{u}\|_\frac{p}{p-1}\|u_n-\bar{u}\|_p
\to 0.
\end{equation}
Additionally, in view of \cite[(4.24)]{Chen-chen2023}, we know that
\begin{equation}\label{eqn:I-alpha-to-0}
\int_{\mathbb R^N}\Bigl[(I_\alpha*|u_n|^{4^*_\alpha})|u_n|^{4^*_\alpha-2}u_n-(I_\alpha*|\bar{u}|^{4^*_\alpha})|\bar{u}|^{4^*_\alpha-2}\bar{u}\Bigr](u_n-\bar{u})dx\to 0.
\end{equation}
Therefore, from (\ref{eqn:un-uc-lambda=Lp-4})-(\ref{eqn:I-alpha-to-0}), it ensures that
$$
\begin{aligned}
0&=\lim\limits_{n\to\infty}\int_{\mathbb R^N}(\lambda_n u_n-\bar{\lambda} \bar{u})(u_n-\bar{u})dx\\
&=\lim\limits_{n\to\infty}\int_{\mathbb R^N}(\lambda_n u_n-\bar{\lambda} u_n+\bar{\lambda} u_n-\bar{\lambda} \bar{u})(u_n-\bar{u})dx\\
&=\bar{\lambda}\lim\limits_{n\to\infty}\|u_n-\bar{u}\|_2^2.
\end{aligned}
$$
At this point, this lemma is proved.
\end{proof}

\section{Proof of Theorem \ref{Thm:normalized-bScritical-solutions-L2super+L2critical}}\label{sec:proof-main-theorem-L2critical+L2super}
Based on the preliminaries in Section \ref{sec:preliminary}, it is sufficient to estimate the mountain pass level $\gamma_{\mu.c}\in \Bigl(0,\frac{4+\alpha}{2(N+\alpha)}S^\frac{N+\alpha}{4+\alpha}_\alpha\Bigr)$ to obtain the existence of normalized solution of problem (\ref{eqn:BS-equation-L2-Super+Critical}). For this purpose, we firstly introduce two extremal functions frequently used in the sequel arguments.
\begin{lemma}(\!\!\cite[Page 16]{Chen-chen2023})\label{lem:U-bi-equation}
For any $\varepsilon>0$, consider the following extremal function
$$
U_\varepsilon(x):=\frac{[N(N+2)(N-2)(N-4) \varepsilon^4]^\frac{N-4}{8}}{(\varepsilon^2+|x|^2)^\frac{N-4}{2}}, \ \ x \in \mathbb{R}^N.
$$
Define
\begin{equation}\label{eqn:Defn-S}
S:=\inf_{u\in H^2(\mathbb R^N)\setminus\{0\}}\frac{\int_{\mathbb R^N}|\Delta u|^2dx}{(\int_{\mathbb R^N}|u|^{4^*}dx)^\frac{2}{4^*}},
\end{equation}
then, the minimizers of $S$ can be obtained by $U_\varepsilon(x)$. Moreover, $U_\varepsilon(x)$ solves the following critical equation
$$
\Delta^2 u=|u|^{4^*-2}u\ \ \mbox{in}\ \  \mathbb R^N.
$$
\end{lemma}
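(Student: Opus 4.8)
The plan is to verify the two assertions in turn: that $U_\varepsilon$ solves $\Delta^2 u = |u|^{4^*-2}u$, and that it realizes the infimum $S$.

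For the equation I would reduce to $\varepsilon=1$ by scaling. Writing $U_1(x)=\kappa(1+|x|^2)^{-\frac{N-4}{2}}$ with $\kappa:=[N(N+2)(N-2)(N-4)]^{\frac{N-4}{8}}$, a short computation gives $U_\varepsilon(x)=\varepsilon^{-\frac{N-4}{2}}U_1(x/\varepsilon)$; since $4^*-1=\frac{N+4}{N-4}$ and $\Delta^2$ lowers the order of homogeneity by $4$, the rescaling $u\mapsto\varepsilon^{-\frac{N-4}{2}}u(\cdot/\varepsilon)$ maps solutions to solutions, so it suffices to check $\Delta^2 U_1=U_1^{4^*-1}$. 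Using the radial formula $\Delta f=f''+\frac{N-1}{r}f'$ one finds $\Delta U_1=-(N-4)\kappa\,(N+2|x|^2)(1+|x|^2)^{-\frac{N}{2}}$, i.e. a polynomial in $|x|^2$ times a negative power of $1+|x|^2$; iterating, the polynomial factor collapses and $\Delta^2 U_1=N(N+2)(N-2)(N-4)\,\kappa\,(1+|x|^2)^{-\frac{N+4}{2}}$. Comparing with $U_1^{4^*-1}=\kappa^{\frac{N+4}{N-4}}(1+|x|^2)^{-\frac{N+4}{2}}$, the identity $\Delta^2 U_1=U_1^{4^*-1}$ is equivalent to $N(N+2)(N-2)(N-4)=\kappa^{\frac{8}{N-4}}$, which is precisely the definition of $\kappa$. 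Hence $U_\varepsilon$ solves the critical equation for every $\varepsilon>0$.

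For the minimizer claim I would argue as follows. The Rayleigh quotient $Q(u):=\|\Delta u\|_2^2/\|u\|_{4^*}^2$ defining $S$ is invariant under translations, under the dilations $u\mapsto u(\cdot/\varepsilon)$ and under $u\mapsto tu$ with $t\neq0$, so all the functions $U_\varepsilon$ give the same value of $Q$. To identify that value with $S$, I would take a minimizing sequence for $Q$ and apply the concentration--compactness principle: at the critical exponent vanishing and dichotomy are ruled out, so after translating and rescaling a subsequence converges to a minimizer $u$; normalizing $\|u\|_{4^*}=1$, its Euler--Lagrange equation reads $\Delta^2 u=S|u|^{4^*-2}u$, and an amplitude dilation brings it to $\Delta^2 u=|u|^{4^*-2}u$. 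The classification theorem of C.-S. Lin for (finite-energy, positive) solutions of this conformally invariant fourth-order equation then forces $u$ to coincide with some $U_\varepsilon$ up to the invariances above; combined with the first part this shows $U_\varepsilon$ attains $S$.

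The step I expect to be the main obstacle is the classification of positive solutions of $\Delta^2 u=u^{\frac{N+4}{N-4}}$ on $\mathbb{R}^N$: because $\Delta^2$ does not satisfy a maximum principle, the moving-plane method cannot be applied to the PDE directly but must be run on the equivalent integral equation built from the Green function of $\Delta^2$, which is the content of Lin's theorem; I would cite this (and the classical attainment results for the biharmonic Sobolev inequality) rather than reprove it. The explicit biharmonic computation for $U_\varepsilon$ is elementary but requires some care to track the normalizing constant $\kappa$, and the attainment of $S$ via concentration--compactness is by now standard.
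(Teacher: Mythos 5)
The paper does not actually prove this lemma: it is quoted verbatim from \cite[Page 16]{Chen-chen2023} (and ultimately from the classical literature on the sharp biharmonic Sobolev constant), so there is no internal proof to compare against. Your sketch is essentially a correct reconstruction of that classical fact. The first half checks out completely: $U_\varepsilon(x)=\varepsilon^{-\frac{N-4}{2}}U_1(x/\varepsilon)$, the scaling $u\mapsto\varepsilon^{-\frac{N-4}{2}}u(\cdot/\varepsilon)$ preserves the equation because $\frac{N-4}{2}(4^*-1)=\frac{N-4}{2}+4$, and the radial computation indeed gives $\Delta U_1=-\kappa(N-4)(N+2|x|^2)(1+|x|^2)^{-\frac{N}{2}}$ and then $\Delta^2U_1=\kappa\,N(N+2)(N-2)(N-4)(1+|x|^2)^{-\frac{N+4}{2}}=U_1^{\frac{N+4}{N-4}}$ precisely by the choice of $\kappa$; so $U_\varepsilon$ solves $\Delta^2u=|u|^{4^*-2}u$ and, being a solution with the exact constant $1$, lies on the Rayleigh quotient level that the classification identifies with $S$.

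The only genuine soft spot is in the second half, and you have only partially flagged it. After concentration--compactness produces a minimizer $u$ with $\Delta^2u=S|u|^{4^*-2}u$, you invoke Lin's classification, which applies to \emph{positive} solutions; but for the biharmonic operator you cannot pass from $u$ to $|u|$ inside the Rayleigh quotient, so the nonnegativity of some minimizer has to be argued separately (for instance by the Riesz-potential comparison trick, replacing $u$ by the function with $-\Delta v=|\Delta u|$, which does not increase $\|\Delta\cdot\|_2$ and does not decrease $\|\cdot\|_{4^*}$, or by working with the equivalent integral formulation). Alternatively, the cleanest classical route avoids both concentration--compactness and Lin's theorem: by duality the sharp constant in $\|u\|_{4^*}\le C\|\Delta u\|_2$ is equivalent to Lieb's sharp Hardy--Littlewood--Sobolev inequality, whose extremals are explicit and give exactly the family $U_\varepsilon$ up to translations and dilations. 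Since the paper itself (like \cite{Rani-Goyal2022}, which it uses for $S_\alpha$ and $S_{H,L}$) simply cites these attainment results, delegating this step to the literature is acceptable, but if you keep the concentration--compactness$+$classification route you should close the positivity gap explicitly.
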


\begin{lemma}(\!\!\cite[Page 228]{Rani-Goyal2022})\label{lem:U-bi-equation}
Let
$$
\widetilde{U}_{\varepsilon}(x):=S^\frac{\alpha(4-N)}{8(4+\alpha)} (C(N, \alpha))^\frac{4-N}{2(4+\alpha)} U_{\varepsilon}(x),\ \ \forall \varepsilon>0,
$$
where $C(N,\alpha)$ is defined in (\ref{eqn:Defn-CNalpha}). Then, $\widetilde{U}_{\varepsilon}$ gives a family of minimizers for
$$
S_{H, L}:=\inf\limits_{u\in H^2(\mathbb R^N)\backslash\{0\}}\frac{\int_{\mathbb R^N}|\Delta u|^2dx}{\left(\int_{\mathbb R^N}\int_{\mathbb R^N}\frac{|u(x)|^{4^*_\alpha}|u(y)|^{4^*_\alpha}}{|x-y|^{N-\alpha}}dxdy\right)^\frac{1}{4^*_\alpha}}
$$
and satisfies the equation
$$
\Delta^2 u=\Big(\int_{\mathbb{R}^N} \frac{|u(y)|^{4^*_\alpha}}{|x-y|^{N-\alpha}}dy\Big)|u|^{4_\alpha^*-2} u \ \ \mbox{in}\ \  \mathbb{R}^N.
$$
Moreover,
\begin{equation}\label{eqn:widetilde-U}
\int_{\mathbb{R}^N}|\Delta \widetilde{U}_{\varepsilon}|^2 dx=\int_{\mathbb{R}^N} \int_{\mathbb{R}^N} \frac{|\widetilde{U}_{\varepsilon}(x)|^{4_\alpha^*}|\widetilde{U}_{\varepsilon}(y)|^{4_\alpha^*}}{|x-y|^{N-\alpha}} dxdy=\Big(S_{H, L}\Big)^\frac{N+\alpha}{4+\alpha}=\Big(A_\alpha^\frac{1}{4^*_\alpha} S_\alpha\Big)^\frac{N+\alpha}{4+\alpha}.
\end{equation}
Here, $A_\alpha$ and $S_\alpha$ are given in (\ref{eqn:Defn-I-A-alpha}) and (\ref{eqn:S-alpha-define}) respectively.
\end{lemma}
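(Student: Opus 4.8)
The plan is to deduce the whole statement from two sharp inequalities already at our disposal: the Hardy--Littlewood--Sobolev inequality of Lemma~\ref{Lem:H-L-S} and the sharp biharmonic Sobolev inequality $\|\Delta u\|_2^2\ge S\|u\|_{4^*}^2$ from (\ref{eqn:Defn-S}), whose extremal is the bubble $U_\varepsilon$ recalled in the preceding lemma. The decisive arithmetic fact is $4^*_\alpha\cdot\frac{2N}{N+\alpha}=4^*$, so that applying Lemma~\ref{Lem:H-L-S} with $r=s=\frac{2N}{N+\alpha}$ and $f=g=|u|^{4^*_\alpha}$ yields
\[
Q(u):=\int_{\mathbb R^N}\int_{\mathbb R^N}\frac{|u(x)|^{4^*_\alpha}|u(y)|^{4^*_\alpha}}{|x-y|^{N-\alpha}}\,dxdy\le C(N,\alpha)\,\|u\|_{4^*}^{2\cdot 4^*_\alpha},\qquad\forall u\in H^2(\mathbb R^N).
\]
Hence $\|\Delta u\|_2^2/Q(u)^{1/4^*_\alpha}\ge C(N,\alpha)^{-1/4^*_\alpha}S$ for every $u\neq0$, i.e. $S_{H,L}\ge C(N,\alpha)^{-1/4^*_\alpha}S$. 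For the reverse inequality I would test with $u=U_\varepsilon$: since $|U_\varepsilon|^{4^*_\alpha}$ is, up to a positive constant, equal to $(\varepsilon^2+|x|^2)^{-(N+\alpha)/2}$, which is precisely the equality profile of the sharp HLS inequality (the classical characterisation due to Lieb), Lemma~\ref{Lem:H-L-S} holds with equality at $U_\varepsilon$; combined with the fact that $U_\varepsilon$ also equalises (\ref{eqn:Defn-S}), this gives $\|\Delta U_\varepsilon\|_2^2/Q(U_\varepsilon)^{1/4^*_\alpha}=C(N,\alpha)^{-1/4^*_\alpha}S$. Therefore $S_{H,L}=C(N,\alpha)^{-1/4^*_\alpha}S$, and since the quotient is invariant under multiplication by a nonzero constant, every positive multiple of $U_\varepsilon$ --- in particular $\widetilde U_\varepsilon$ --- is a minimiser. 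Finally $S_{H,L}=A_\alpha^{1/4^*_\alpha}S_\alpha$ is immediate from (\ref{eqn:Defn-I-A-alpha}) and (\ref{eqn:S-alpha-define}), because $\int_{\mathbb R^N}(I_\alpha*|u|^{4^*_\alpha})|u|^{4^*_\alpha}\,dx=A_\alpha\,Q(u)$.

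Next I would derive the Euler--Lagrange equation. Differentiating the scale-invariant quotient $\|\Delta(w+t\varphi)\|_2^2/Q(w+t\varphi)^{1/4^*_\alpha}$ at $t=0$ shows that any minimiser $w$ of $S_{H,L}$ solves
\[
\Delta^2 w=\frac{\|\Delta w\|_2^2}{Q(w)}\Big(\int_{\mathbb R^N}\frac{|w(y)|^{4^*_\alpha}}{|x-y|^{N-\alpha}}\,dy\Big)|w|^{4^*_\alpha-2}w\ \ \mbox{in}\ \mathbb R^N.
\]
To make the coefficient equal to $1$ one must pass to the multiple $\widetilde U_\varepsilon=t\,U_\varepsilon$ normalised so that $\|\Delta\widetilde U_\varepsilon\|_2^2=Q(\widetilde U_\varepsilon)$, that is $t^{2(4^*_\alpha-1)}=\|\Delta U_\varepsilon\|_2^2/Q(U_\varepsilon)$. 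Testing $\Delta^2 U_\varepsilon=|U_\varepsilon|^{4^*-2}U_\varepsilon$ against $U_\varepsilon$ and using (\ref{eqn:Defn-S}) gives $\|\Delta U_\varepsilon\|_2^2=S^{N/4}$ and $\|U_\varepsilon\|_{4^*}^2=S^{(N-4)/4}$, whence, by the equality in HLS at $U_\varepsilon$ together with $(N-4)4^*_\alpha=N+\alpha$, $Q(U_\varepsilon)=C(N,\alpha)\|U_\varepsilon\|_{4^*}^{2\cdot 4^*_\alpha}=C(N,\alpha)\,S^{(N+\alpha)/4}$ and thus $\|\Delta U_\varepsilon\|_2^2/Q(U_\varepsilon)=C(N,\alpha)^{-1}S^{-\alpha/4}$. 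Raising to the power $\frac{1}{2(4^*_\alpha-1)}=\frac{N-4}{2(4+\alpha)}$ gives exactly $t=S^{\alpha(4-N)/(8(4+\alpha))}(C(N,\alpha))^{(4-N)/(2(4+\alpha))}$, which is precisely the normalising constant appearing in the definition of $\widetilde U_\varepsilon$; this is the reason for that choice, and it makes $\widetilde U_\varepsilon$ solve the stated biharmonic Choquard equation.

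The norm identity (\ref{eqn:widetilde-U}) then comes for free. Putting $T:=\|\Delta\widetilde U_\varepsilon\|_2^2=Q(\widetilde U_\varepsilon)$ and using that $\widetilde U_\varepsilon$ attains $S_{H,L}$, one gets $S_{H,L}=T/T^{1/4^*_\alpha}=T^{(4^*_\alpha-1)/4^*_\alpha}$, hence $T=S_{H,L}^{4^*_\alpha/(4^*_\alpha-1)}=S_{H,L}^{(N+\alpha)/(4+\alpha)}=\big(A_\alpha^{1/4^*_\alpha}S_\alpha\big)^{(N+\alpha)/(4+\alpha)}$, as claimed. I expect no genuine analytic difficulty here: the finiteness of $\int_{\mathbb R^N}\frac{|w(y)|^{4^*_\alpha}}{|x-y|^{N-\alpha}}\,dy$ for $w\in H^2(\mathbb R^N)$, the differentiability of the quotient, and the admissibility of $H^2$ test functions are all routine. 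The one step that really needs care is the bookkeeping of the sharp constants $S$, $C(N,\alpha)$, $A_\alpha$ together with the exponent identities $4^*_\alpha\cdot\frac{2N}{N+\alpha}=4^*$, $(N-4)4^*_\alpha=N+\alpha$, $4^*_\alpha-1=\frac{4+\alpha}{N-4}$ and $\frac{4^*_\alpha}{4^*_\alpha-1}=\frac{N+\alpha}{4+\alpha}$, needed to confirm that the prefactor in the definition of $\widetilde U_\varepsilon$ is exactly the one turning the Euler--Lagrange coefficient into $1$.
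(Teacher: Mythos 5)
Your proposal is correct. Note that the paper does not prove this lemma at all --- it is quoted verbatim from \cite[Page 228]{Rani-Goyal2022} --- so there is no in-paper argument to compare against; your derivation is the standard one (and essentially the one in the cited reference). All the constant bookkeeping checks out: the exponent identities $4^*_\alpha\cdot\tfrac{2N}{N+\alpha}=4^*$, $(N-4)4^*_\alpha=N+\alpha$, $4^*_\alpha-1=\tfrac{4+\alpha}{N-4}$ are right, the values $\|\Delta U_\varepsilon\|_2^2=S^{N/4}$, $\|U_\varepsilon\|_{4^*}^2=S^{(N-4)/4}$, $Q(U_\varepsilon)=C(N,\alpha)S^{(N+\alpha)/4}$ follow as you say, and the normalising exponent $t^{2(4^*_\alpha-1)}=\|\Delta U_\varepsilon\|_2^2/Q(U_\varepsilon)=C(N,\alpha)^{-1}S^{-\alpha/4}$ reproduces exactly the prefactor $S^{\frac{\alpha(4-N)}{8(4+\alpha)}}(C(N,\alpha))^{\frac{4-N}{2(4+\alpha)}}$ in the definition of $\widetilde U_\varepsilon$. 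The one step you should flag explicitly as an external input is the claim that equality holds in the Hardy--Littlewood--Sobolev inequality at $f=g=|U_\varepsilon|^{4^*_\alpha}\propto(\varepsilon^2+|x|^2)^{-(N+\alpha)/2}$: this is Lieb's classification of the extremals in the conformal case $r=s=\tfrac{2N}{N+\alpha}$, which is indeed part of the theorem cited as Lemma~\ref{Lem:H-L-S}, so the reliance is legitimate but worth stating. The final identity $S_{H,L}=A_\alpha^{1/4^*_\alpha}S_\alpha$ and the value $\big(S_{H,L}\big)^{\frac{N+\alpha}{4+\alpha}}$ for the common norm follow as you indicate, consistently with (\ref{eqn:S-qlpha-S-ACN}) used later in the paper.
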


\begin{lemma}\label{lem:bih-eatumate-energy}
Let $B_r(x)$ be the ball of radius of $r$ at $x$ and $\psi \in \mathcal{C}_0^{\infty}(\mathbb{R}^N)$ is a radial cut-off function satisfying:
\begin{itemize}
\item[(i)]  $0 \leq \psi(x) \leq 1$ for any $x\in\mathbb R^N$;
\item[(ii)] $\psi(x) \equiv 1$ in $B_1(0)$;
\item[(iii)]$\psi(x) \equiv 0$ in $\mathbb R^N\setminus B_2(0)$.
\end{itemize}
Define
\begin{equation}\label{eqn:u-varepsilon-define}
u_{\varepsilon}(x):=\psi(x) U_{\varepsilon}(x),
\end{equation}
then the following hold true:
\begin{equation}\label{eqn:delta-eatimate}
\int_{\mathbb{R}^N}|\Delta u_\varepsilon|^2 d x=S^\frac{N}{4}+O(\varepsilon^{N-4}),
\end{equation}
\begin{equation}\label{eqn:4*-eatimate}
\int_{\mathbb{R}^N}| u_\varepsilon|^{4^*} d x=S^\frac{N}{4}+O(\varepsilon^{N})
\end{equation}
and
\begin{equation}\label{eqn:p-estimate}
\int_{\mathbb{R}^N}|u_\varepsilon|^p dx\begin{cases}=K \varepsilon^{N-\frac{(N-4)p}{2}}+o\Big(\varepsilon^{N-\frac{(N-4)p}{2}}\Big), &\,\, \mbox{if}\,\, p > \frac{N}{N-4};\\[0.25cm]
\geq K \varepsilon^\frac{N}{2}|ln \varepsilon|+O\Big(\varepsilon^\frac{N}{2}\Big), &\,\, \mbox{if}\,\,  p = \frac{N}{N-4}; \\[0.25cm]
=K \varepsilon^\frac{(N-4)p}{2}+o\Big(\varepsilon^{\frac{(N-4)p}{2}}\Big), & \,\,\mbox{if}\,\, p < \frac{N}{N-4}
\end{cases}
\end{equation}
for some constant $K>0$, where $S$ is given in (\ref{eqn:Defn-S}).
\end{lemma}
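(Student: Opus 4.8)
The plan is to exploit the explicit profile of the extremal function $U_\varepsilon$ recalled above, which solves $\Delta^2 U_\varepsilon=U_\varepsilon^{4^*-1}$. Testing this equation against $U_\varepsilon$ (and integrating by parts twice, which is legitimate by the decay of $U_\varepsilon$) together with the fact that $U_\varepsilon$ realises $S$ in \eqref{eqn:Defn-S} gives the basic identity
$$
\int_{\mathbb R^N}|\Delta U_\varepsilon|^2\,dx=\int_{\mathbb R^N}U_\varepsilon^{4^*}\,dx=S^{\frac N4}.
$$
Since the truncated function $u_\varepsilon=\psi U_\varepsilon$ agrees with $U_\varepsilon$ on $B_1(0)$ and vanishes outside $B_2(0)$, every truncation error is supported on the annulus $B_2(0)\setminus B_1(0)$ or in the tail $\mathbb R^N\setminus B_1(0)$; on that set one has, for small $\varepsilon$, the pointwise bounds $U_\varepsilon(x)\lesssim\varepsilon^{\frac{N-4}{2}}|x|^{-(N-4)}$, $|\nabla U_\varepsilon(x)|\lesssim\varepsilon^{\frac{N-4}{2}}|x|^{-(N-3)}$ and $|\Delta U_\varepsilon(x)|\lesssim\varepsilon^{\frac{N-4}{2}}|x|^{-(N-2)}$, obtained by differentiating the explicit kernel $(\varepsilon^2+|x|^2)^{-(N-4)/2}$. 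These bounds turn every correction into an elementary radial integral.

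For \eqref{eqn:delta-eatimate} I would expand, via the Leibniz rule, $\Delta u_\varepsilon=\psi\Delta U_\varepsilon+2\nabla\psi\cdot\nabla U_\varepsilon+U_\varepsilon\Delta\psi$, so that on the annulus $|\Delta u_\varepsilon|^2\le C\bigl(|\Delta U_\varepsilon|^2+|\nabla U_\varepsilon|^2+U_\varepsilon^2\bigr)$, while $\Delta u_\varepsilon=\Delta U_\varepsilon$ on $B_1(0)$ and $u_\varepsilon\equiv0$ outside $B_2(0)$. Writing $\int_{B_1(0)}|\Delta U_\varepsilon|^2=S^{N/4}-\int_{\mathbb R^N\setminus B_1(0)}|\Delta U_\varepsilon|^2$ and using the decay bounds gives $\int_{\mathbb R^N\setminus B_1(0)}|\Delta U_\varepsilon|^2\,dx\lesssim\varepsilon^{N-4}\int_1^\infty r^{3-N}\,dr=O(\varepsilon^{N-4})$ (finite since $N\ge5$), with the $\nabla U_\varepsilon$ and $U_\varepsilon$ annulus terms of the same or smaller order; this yields \eqref{eqn:delta-eatimate}. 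The identical splitting applied to $\int_{\mathbb R^N}|u_\varepsilon|^{4^*}$, together with $\int_{\mathbb R^N\setminus B_1(0)}U_\varepsilon^{4^*}\,dx\lesssim\varepsilon^{N}\int_1^\infty r^{-N-1}\,dr=O(\varepsilon^{N})$, gives \eqref{eqn:4*-eatimate}.

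For \eqref{eqn:p-estimate} I would perform the rescaling $x=\varepsilon y$. Using $U_\varepsilon(\varepsilon y)=c_N\,\varepsilon^{-\frac{N-4}{2}}(1+|y|^2)^{-\frac{N-4}{2}}$ one obtains
$$
\int_{\mathbb R^N}|u_\varepsilon|^p\,dx=c_N^{\,p}\,\varepsilon^{\,N-\frac{(N-4)p}{2}}\,J_\varepsilon,\qquad J_\varepsilon:=\int_{\mathbb R^N}\psi^p(\varepsilon y)\,(1+|y|^2)^{-\frac{(N-4)p}{2}}\,dy,
$$
so that the whole question reduces to the asymptotics of $J_\varepsilon$, which is governed by the integrability at infinity of the weight $(1+|y|^2)^{-(N-4)p/2}\sim|y|^{-(N-4)p}$. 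Three regimes occur. If $p>N/(N-4)$ the weight lies in $L^1(\mathbb R^N)$ and, since $\psi(\varepsilon\,\cdot)\to1$ pointwise, dominated convergence gives $J_\varepsilon\to\int_{\mathbb R^N}(1+|y|^2)^{-(N-4)p/2}\,dy=:K>0$. If $p=N/(N-4)$ the weight behaves like $|y|^{-N}$; since $\psi(\varepsilon\,\cdot)\equiv1$ on $B_{1/\varepsilon}(0)$ and is supported in $B_{2/\varepsilon}(0)$, one gets $J_\varepsilon=\omega_{N-1}|\ln\varepsilon|+O(1)$. If $p<N/(N-4)$ the radial integral $\int^{1/\varepsilon}r^{N-1-(N-4)p}\,dr$ diverges like a power, so $J_\varepsilon=K\varepsilon^{\,(N-4)p-N}+o\bigl(\varepsilon^{\,(N-4)p-N}\bigr)$ for some $K>0$. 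Multiplying by $\varepsilon^{\,N-(N-4)p/2}$ reproduces exactly the three cases of \eqref{eqn:p-estimate}, the exponents being $N-(N-4)p/2$, $N/2$ and $(N-4)p/2$, respectively.

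The computations are routine; the points that will need some care are, first, checking that $\Delta U_\varepsilon$ decays like $\varepsilon^{(N-4)/2}|x|^{-(N-2)}$ (so that the error in \eqref{eqn:delta-eatimate} is genuinely $O(\varepsilon^{N-4})$ and not worse), which comes from applying $\Delta$ to the explicit radial kernel; and, second, the borderline exponent $p=N/(N-4)$, where the contribution of the transition layer $B_{2/\varepsilon}(0)\setminus B_{1/\varepsilon}(0)$ of the cut-off must be absorbed into the $O(\varepsilon^{N/2})$ remainder rather than into the leading $\varepsilon^{N/2}|\ln\varepsilon|$ term — which is precisely why the estimate in that case is stated as an inequality.
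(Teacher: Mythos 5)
Your proposal is correct and follows essentially the same route as the paper: both rest on the explicit profile of $U_\varepsilon$, the identity $\|\Delta U_\varepsilon\|_2^2=\|U_\varepsilon\|_{4^*}^{4^*}=S^{\frac N4}$, decay bounds that confine all truncation errors to $|x|\ge 1$, and the same three-case analysis (governed by the integrability of $|y|^{-(N-4)p}$) of the rescaled radial integral for the $L^p$ norm. The only difference is organizational: you split the domain ($B_1(0)$ plus the annulus) and use pointwise decay bounds, whereas the paper expands $|\Delta u_\varepsilon|^2$ term by term into the pieces $A_1,\dots,A_4$ and evaluates the truncated radial integrals explicitly, so the underlying estimates coincide.
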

\begin{proof}
Some direct calculations state that
\begin{equation}\label{eqn:Delta-u-varepsilon}
\Delta u_{\varepsilon}=\Delta U_{\varepsilon}\psi+U_{\varepsilon}\Delta\psi+2\nabla U_{\varepsilon}\cdot \nabla\psi,
\end{equation}
\begin{equation}\label{eqn:U-varepsilon-AN}
\nabla U_{\varepsilon}=D_N\varepsilon^{\frac{N-4}{2}}(4-N)\frac{x}{(\varepsilon^2+|x|^2)^{\frac{N-2}{2}}},\
\end{equation}
and
\begin{equation}\label{eqn:Delta-U-varepsilon-AN}
\Delta U_{\varepsilon}=D_N\varepsilon^{\frac{N-4}{2}}(4-N)\left[\frac{1}{(\varepsilon^2+|x|^2)^{\frac{N-2}{2}}}+(2-N)\frac{|x|^2}{(\varepsilon^2+|x|^2)^{\frac{N}{2}}}  \right],
\end{equation}
where
$$
D_N:=[N(N+2)(N-2)(N-4)]^\frac{N-4}{8}.
$$
Then, we deduce that
\begin{equation}\label{eqn:Delta-U-varepsilon-2-2}
\begin{aligned}
\|\Delta U_{\varepsilon}\|^2_2&=\int_{\mathbb{R}^N}|\Delta U_{\varepsilon}|^2dx\\
&=D_N^2\varepsilon^{N-4}(4-N)^2\int_{\mathbb{R}^N}\frac{1}{(\varepsilon^2+|x|^2)^{N-2}}dx\\
&\ \ \  +D_N^2\varepsilon^{N-4}(4-N)^2(2-N)^2\int_{\mathbb{R}^N} \frac{|x|^4}{(\varepsilon^2+|x|^2)^{N}}dx\\
&\ \ \  +2D_N^2\varepsilon^{N-4}(4-N)^2(2-N)\int_{\mathbb{R}^N}\frac{|x|^2}{(\varepsilon^2+|x|^2)^{N-1}} dx   \\
&=D_N^2\varepsilon^{N-4}(4-N)^2\omega\int^{\infty}_0\frac{r^{N-1}}{(\varepsilon^2+r^2)^{N-2}}dr\\
&\ \ \ +D_N^2\varepsilon^{N-4}(4-N)^2(2-N)^2\omega\int^{\infty}_0 \frac{r^{N+3}}{(\varepsilon^2+r^2)^{N}}dr\\
&\ \ \  -2D_N^2\varepsilon^{N-4}(4-N)^2(N-2)\omega\int^{\infty}_0\frac{r^{N+1}}{(\varepsilon^2+r^2)^{N-1}} dr   \\
&=D_N^2\varepsilon^{N-4}(4-N)^2\frac{1}{\varepsilon^{2N-4}}\varepsilon^{N-1}\varepsilon\omega\int^{\infty}_0\frac{r^{N-1}}{(1+r^2)^{N-2}}dr\\
&\ \ \ +D_N^2\varepsilon^{N-4}(4-N)^2\frac{1}{\varepsilon^{2N}}\varepsilon^{N+3}\varepsilon(2-N)^2\omega\int^{\infty}_0\frac{r^{N+3}}{(1+r^2)^N}dr\\
&\ \ \  -2D_N^2\varepsilon^{N-4}(4-N)^2(N-2)\varepsilon^{N+1}\frac{1}{\varepsilon^{2N-2}}\varepsilon\omega\int^{\infty}_0\frac{r^{N+1}}{(1+r^2)^{N-1}}dr\\
&=D_N^2(4-N)^2\omega\int^{\infty}_0\frac{r^{N-1}}{(1+r^2)^{N-2}}+(2-N)^2\frac{r^{N+3}}{(1+r^2)^N}-2(N-2)\frac{r^{N+1}}{(1+r^2)^{N-1}}dr
\end{aligned}
\end{equation}
and
\begin{equation}\label{eqn:Delta-u-varepsilon-2-2}
\begin{aligned}
\|\Delta u_{\varepsilon}\|^2_2&=\int_{\mathbb{R}^N}\Big|\Delta U_{\varepsilon}\psi+U_{\varepsilon}\Delta\psi+2\nabla U_{\varepsilon}\cdot \nabla\psi \Big|^2dx\\
&=\int_{\mathbb{R}^N}|\Delta U_{\varepsilon}\psi|^2+\Delta U_{\varepsilon}\psi U_{\varepsilon}\Delta \psi+2\Delta U_{\varepsilon}\psi \nabla U_{\varepsilon}\cdot\nabla\psi dx\\
&\ \ \ +\int_{\mathbb{R}^N}U_{\varepsilon}\Delta \psi \Delta U_{\varepsilon}\psi+|U_{\varepsilon}\Delta \psi|^2+2U_{\varepsilon}\Delta\psi\nabla U_{\varepsilon}\cdot\nabla\phi dx\\
&\ \ \ +\int_{\mathbb{R}^N}2\nabla U_{\varepsilon}\cdot\nabla\psi\Delta U_{\varepsilon}\psi+2\nabla U_{\varepsilon}\cdot\nabla\psi U_{\varepsilon}\Delta \psi+4|\nabla U_{\varepsilon}\cdot\nabla\psi|^2dx\\
&=2A_1(\varepsilon)+A_2(\varepsilon)+A_3(\varepsilon)+A_4(\varepsilon),
\end{aligned}
\end{equation}
where $\omega$ is the area of the unit sphere in $\mathbb{R}^N$,
$$
A_1(\varepsilon):=\int_{\mathbb{R}^N}\Delta U_{\varepsilon}\psi U_{\varepsilon}\Delta \psi+2\Delta U_{\varepsilon}\psi  \nabla U_{\varepsilon}\cdot\nabla\psi+2U_{\varepsilon}\Delta\psi \nabla U_{\varepsilon}\cdot\nabla\psi dx,
$$
$$
A_2(\varepsilon):=\int_{\mathbb{R}^N}|\Delta U_{\varepsilon}\psi|^2dx,
$$
$$
A_3(\varepsilon):=\int_{\mathbb{R}^N}|U_\varepsilon\Delta\psi|^2dx
$$
and
$$
A_4(\varepsilon):=4\int_{\mathbb{R}^N}|\nabla U_{\varepsilon}\cdot\nabla \psi|^2dx.
$$

Observe the existence of $\widetilde{K}$ such that $|\nabla\psi|\le \widetilde{K}$ and $|\Delta \psi|\le \widetilde{K}$, it immediately brings that
\begin{equation}\label{eqn:A1-1}
\begin{aligned}
&\int_{\mathbb{R}^N}|\Delta U_{\varepsilon}\psi U_{\varepsilon}\Delta \psi|dx\\
&=\int_{B_1(0)}|\Delta U_{\varepsilon}\psi U_{\varepsilon}\Delta \psi|dx+\int_{B_2(0)\backslash B_1(0)}\!\!\!\!\!\!\!\!\!\!|\Delta U_{\varepsilon}\psi U_{\varepsilon}\Delta \psi|dx+\int_{\mathbb{R}^N\backslash B_2(0)}\!\!\!\!\!\!\!\!\!\!|\Delta U_{\varepsilon}\psi U_{\varepsilon}\Delta \psi|dx\\
&=\int_{B_2(0)\backslash B_1(0)}|\Delta U_{\varepsilon}\psi U_{\varepsilon}\Delta \psi|dx\\
&=\int_{B_2(0)\backslash B_1(0)}D_N^2\varepsilon^{N-4}(N-4)\left|\frac{\psi}{(\varepsilon^2+|x|^2)^{\frac{N-2}{2}}}+(2-N)\frac{|x|^2\psi}{(\varepsilon^2+|x|^2)^{\frac{N}{2}}}  \right|\frac{|\Delta \psi|}{(\varepsilon^2+|x|^2)^{\frac{N-4}{2}}} dx\\
&\leq D_N^2\varepsilon^{N-4}(N-4)\widetilde{K}^2\int_{B_2(0)\backslash B_1(0)}\frac{dx}{(\varepsilon^2+|x|^2)^{N-3}}\\
&\ \ \ +D_N^2\varepsilon^{N-4}(N-4)(N-2)\widetilde{K}\int_{B_2(0)\backslash B_1(0)}\frac{|x|^2}{(\varepsilon^2+|x|^2)^{N-2}}dx\\
&= D_N^2\varepsilon^{N-4}(N-4)\omega\widetilde{K}\int_1^2\frac{r^{N-1}}{(\varepsilon^2+r^2)^{N-3}}dr
+D_N^2\varepsilon^{N-4}(N-4)(N-2)\omega\widetilde{K}^2\int_1^2\frac{r^{N+1}}{(\varepsilon^2+r^2)^{N-2}}dr\\
&\leq C \varepsilon^{N-4},
\end{aligned}
\end{equation}
\begin{equation}\label{eqn:A1-2}
\begin{aligned}
&\int_{\mathbb{R}^N}|2\Delta U_{\varepsilon}\psi\nabla U_{\varepsilon}\cdot\nabla\psi|dx\\
&=\int_{B_2(0)\backslash B_1(0)}|2\Delta U_{\varepsilon}\psi\nabla U_{\varepsilon}\cdot\nabla\psi|dx\\
&=\int_{B_2(0)\backslash B_1(0)}\left|2D_N^2\varepsilon^{N-4}(4-N)^2\Big(\frac{\psi}{(\varepsilon^2+|x|^2)^{\frac{N-2}{2}}}+(N-2)\frac{|x|^2\psi}{(\varepsilon^2+|x|^2)^{\frac{N}{2}}}  \Big)\frac{x\cdot\nabla\psi}{(\varepsilon^2+|x|^2)^{\frac{N-2}{2}}}\right|dx\\
&\leq2D_N^2\varepsilon^{N-4}(4-N)^2\widetilde{K}\int_{B_2(0)\backslash B_1(0)}\frac{|x|}{(\varepsilon^2+|x|^2)^{N-2}}dx\\
&\ \ \ +2D_N^2\varepsilon^{N-4}(4-N)^2(N-2)\widetilde{K}\int_{B_2(0)\backslash B_1(0)}\frac{|x|^3}{(\varepsilon^2+|x|^2)^{N-1}}dx\\
&=2D_N^2\varepsilon^{N-4}(4-N)^2\omega\widetilde{K}\int_1^2\frac{r^{N}}{(\varepsilon^2+r^2)^{N-2}}dr +2D_N^2\varepsilon^{N-4}(4-N)^2(N-2)\omega\widetilde{K}\int_1^2\frac{r^{N+2}}{(\varepsilon^2+r^2)^{N-1}}dr\\
&\leq C\varepsilon^{N-4}
\end{aligned}
\end{equation}
and
\begin{equation}\label{eqn:A1-3}
\begin{aligned}
&\int_{\mathbb{R}^N}|2U_{\varepsilon}\Delta\psi \nabla U_{\varepsilon}\cdot\nabla\psi|dx\\
&=\int_{B_1(0)}|2U_{\varepsilon}\Delta\psi \nabla U_{\varepsilon}\cdot\nabla\psi|dx+\int_{B_2(0)\backslash B_1(0)}|2U_{\varepsilon}\Delta\psi \nabla U_{\varepsilon}\cdot\nabla\psi|dx\\
&\ \ \ +\int_{\mathbb{R}^N\backslash B_2(0)}|2U_{\varepsilon}\Delta\psi \nabla U_{\varepsilon}\cdot\nabla\psi|dx\\
&=\int_{B_2(0)\backslash B_1(0)}|2U_{\varepsilon}\Delta\psi \nabla U_{\varepsilon}\cdot\nabla\psi|dx\\
&=\int_{B_2(0)\backslash B_1(0)}\left|2D_N^2(4-N)\varepsilon^{N-4}\frac{\Delta \psi x\cdot\nabla\psi}{(\varepsilon^2+|x|^2)^{\frac{N-4}{2}+\frac{N-2}{2}}}\right|dx\\
&\leq2D_N^2(N-4)\varepsilon^{N-4}\widetilde{K}^2\int_{B_2(0)\backslash B_1(0)}\frac{ |x|}{(\varepsilon^2+|x|^2)^{\frac{N-4}{2}+\frac{N-2}{2}}}dx\\
&= 2D_N^2(N-4)\varepsilon^{N-4}\omega\int_1^2\frac{r^N}{(\varepsilon^2+r^2)^{N-3}}dr\\
&\leq C\varepsilon^{N-4}.
\end{aligned}
\end{equation}
Thus, (\ref{eqn:A1-1})-(\ref{eqn:A1-3}) imply that
\begin{equation}\label{eqn:A1-estimate}
A_1(\varepsilon)=O(\varepsilon^{N-4}).
\end{equation}

In order to estimate $A_2(\varepsilon)$, we need the subsequent facts:
\begin{equation}\label{eqn:A2-01}
\begin{aligned}
&D_N^2\varepsilon^{N-4}(4-N)^2\int_{\mathbb{R}^N}\frac{\psi^2}{(\varepsilon^2+|x|^2)^{N-2}}dx\\
&=D_N^2\varepsilon^{N-4}(4-N)^2\int_{B_2(0)}\frac{\psi^2}{(\varepsilon^2+|x|^2)^{N-2}}dx\\
&=D_N^2\varepsilon^{N-4}(4-N)^2\omega\int_0^2\frac{r^{N-1}\psi^2}{(\varepsilon^2+r^2)^{N-2}}dr\\
&=D_N^2\varepsilon^{N-4}(4-N)^2\omega\int_0^2\Big(\frac{r^{N-1}(\psi^2-1)}{(\varepsilon^2+r^2)^{N-2}}+\frac{r^{N-1}}{(\varepsilon^2+r^2)^{N-2}}\Big)dr\\
&=D_N^2\varepsilon^{N-4}(4-N)^2\omega\int_0^1\frac{r^{N-1}(\psi^2-1)}{(\varepsilon^2+r^2)^{N-2}}dr+D_N^2\varepsilon^{N-4}(4-N)^2\omega
\int_1^2\frac{r^{N-1}(\psi^2-1)}{(\varepsilon^2+r^2)^{N-2}}dr\\
&\ \ \ +D_N^2\varepsilon^{N-4}(4-N)^2\omega\int_0^2\frac{r^{N-1}}{(\varepsilon^2+r^2)^{N-2}}dr\\
&=D_N^2\varepsilon^{N-4}(4-N)^2\omega\int_0^2\frac{r^{N-1}}{(\varepsilon^2+r^2)^{N-2}}dr+O(\varepsilon^{N-4})\\
&=D_N^2(4-N)^2\omega\int_0^{\frac{2}{\varepsilon}}\frac{r^{N-1}}{(1+r^2)^{N-2}}dr+O(\varepsilon^{N-4}),
\end{aligned}
\end{equation}
\begin{equation}\label{eqn:A2-02}
\begin{aligned}
&D_N^2\varepsilon^{N-4}(4-N)^2(2-N)^2 \int_{\mathbb{R}^N}\frac{|x|^4\psi^2}{(\varepsilon^2+|x|^2)^{N}}dx\\
&=D_N^2\varepsilon^{N-4}(4-N)^2(2-N)^2 \int_{B_2(0)}\frac{|x|^4\psi^2}{(\varepsilon^2+|x|^2)^{N}}dx\\
&=D_N^2\varepsilon^{N-4}(4-N)^2(2-N)^2\omega\int_0^2\frac{r^{N+3}\psi^2}{(\varepsilon^2+r^2)^{N}}dr\\
&=D_N^2\varepsilon^{N-4}(4-N)^2(2-N)^2\omega\int_0^2\Big(\frac{r^{N+3}(\psi^2-1)}{(\varepsilon^2+r^2)^{N}}+\frac{r^{N+3}}{(\varepsilon^2+r^2)^{N}} \Big)dr\\
&=D_N^2\varepsilon^{N-4}(4-N)^2(2-N)^2\omega\int_0^1\frac{r^{N+3}(\psi^2-1)}{(\varepsilon^2+r^2)^{N}}dr\\
&\ \ \ +D_N^2\varepsilon^{N-4}(4-N)^2(2-N)^2\omega\int_1^2\frac{r^{N+3}(\psi^2-1)}{(\varepsilon^2+r^2)^{N}}dr\\
&\ \ \ +D_N^2\varepsilon^{N-4}(4-N)^2(2-N)^2\omega\int_0^2\frac{r^{N+3}}{(\varepsilon^2+r^2)^{N}} dr\\
&=D_N^2(4-N)^2(2-N)^2\omega\int_0^{\frac{2}{\varepsilon}}\frac{r^{N+3}}{(1+r^2)^{N}}dr+O(\varepsilon^{N-4})
\end{aligned}
\end{equation}
and
\begin{equation}\label{eqn:A2-03}
\begin{aligned}
&2D_N^2\varepsilon^{N-4}(4-N)^2(N-2)\int_{\mathbb{R}^N}\frac{|x|^2\psi^2}{(\varepsilon^2+|x|^2)^{N-1}}dx\\
&=2D_N^2\varepsilon^{N-4}(4-N)^2(N-2)\omega\int_0^2\frac{r^{N+1}\psi^2}{(\varepsilon^2+r^2)^{N-1}}dr\\
&=2D_N^2\varepsilon^{N-4}(4-N)^2(N-2)\omega\int_0^2\Big(\frac{r^{N+1}(\psi^2-1)}{(\varepsilon^2+r^2)^{N-1}}+\frac{r^{N+1}}{(\varepsilon^2+r^2)^{N-1}} \Big)dr\\
&=2D_N^2\varepsilon^{N-4}(4-N)^2(N-2)\omega\int_0^1\frac{r^{N+1}(\psi^2-1)}{(\varepsilon^2+r^2)^{N-1}}dr\\
&\ \ \ +2D_N^2\varepsilon^{N-4}(4-N)^2(N-2)\omega\int_1^2\frac{r^{N+1}(\psi^2-1)}{(\varepsilon^2+r^2)^{N-1}}dr\\
&\ \ \ +2D_N^2\varepsilon^{N-4}(4-N)^2(N-2)\omega\int_0^2\frac{r^{N+1}}{(\varepsilon^2+r^2)^{N-1}}dr\\
&=2D_N^2(4-N)^2(N-2)\omega\int_0^{\frac{2}{\varepsilon}}\frac{r^{N+1}}{(1+r^2)^{N-1}}dr+O(\varepsilon^{N-4}).
\end{aligned}
\end{equation}
Based on (\ref{eqn:A2-01})-(\ref{eqn:A2-03}), there holds that
\begin{equation}\label{eqn:A2-estimate}
\begin{aligned}
A_2(\varepsilon)&=\int_{\mathbb{R}^N}|\Delta U_{\varepsilon}\psi|^2dx\\
&=\int_{\mathbb{R}^N}D_N^2\varepsilon^{N-4}(4-N)^2\left(\frac{\psi}{(\varepsilon^2+|x|^2)^{\frac{N-2}{2}}}+(2-N)\frac{|x|^2\psi}{(\varepsilon^2+|x|^2\psi)^{\frac{N}{2}}}  \right)^2dx\\
&=D_N^2\varepsilon^{N-4}(4-N)^2\!\!\int_{\mathbb{R}^N}\!\!\frac{\psi^2}{(\varepsilon^2+|x|^2)^{N-2}}dx\!+\!D_N^2\varepsilon^{N-4}(4-N)^2(2-N)^2\!\!\int_{\mathbb{R}^N}
\!\!\frac{|x|^4\psi^2}{(\varepsilon^2+|x|^2)^{N}}dx\\
&\ \ \ -2D_N^2\varepsilon^{N-4}(4-N)^2(N-2)\int_{\mathbb{R}^N}\frac{|x|^2|\psi|^2}{(\varepsilon^2+|x|^2)^{N-1}}dx\\
&=D_N^2(4-N)^2\omega\int_0^{\frac{2}{\varepsilon}}\frac{r^{N-1}}{(1+r^2)^{N-2}}dr
+D_N^2(4-N)^2(2-N)^2\omega\int_0^{\frac{2}{\varepsilon}}\frac{r^{N+3}}{(1+r^2)^{N}}dr\\
&\ \ \ -2D_N^2(4-N)^2(N-2)\omega\int_0^{\frac{2}{\varepsilon}}\frac{r^{N+1}}{(1+r^2)^{N-1}}dr+O(\varepsilon^{N-4}).
\end{aligned}
\end{equation}

With regard to $A_3(\varepsilon)$ and $A_4(\varepsilon)$, obviously one has
\begin{equation}\label{eqn:A3-estimate}
\begin{aligned}
A_3(\varepsilon)&=\int_{\mathbb{R}^N}|U_\varepsilon\Delta\psi|^2dx\\
&=\int_{B_2(0)\backslash B_1(0)}|U_\varepsilon\Delta\psi|^2dx\\
&=\int_{B_2(0)\backslash B_1(0)}D_N^2\varepsilon^{N-4}\frac{|\Delta\psi|^2}{(\varepsilon^2+|x|^2)^{N-4}}dx\\
&\leq D_N^2\varepsilon^{N-4}\omega\widetilde{K}^2\int_1^2\frac{r^{N-1}}{(\varepsilon^2+r^2)^{N-4}}dr\\
&\leq O(\varepsilon^{N-4})
\end{aligned}
\end{equation}
and
\begin{equation}\label{eqn:A4-estimate}
\begin{aligned}
A_4(\varepsilon)&=4\int_{\mathbb{R}^N}|\nabla U_{\varepsilon}\cdot\nabla \psi|^2dx\\
&=4D_N^2\varepsilon^{N-4}(4-N)^2\int_{\mathbb{R}^N}\frac{|x\cdot\nabla\psi|^2}{(\varepsilon^2+|x|^2)^{N-2}}dx\\
&\leq4D_N^2\varepsilon^{N-4}(4-N)^2\int_{\mathbb{R}^N}\frac{|x|^2|\nabla\psi|^2}{(\varepsilon^2+|x|^2)^{N-2}}dx\\
&= 4D_N^2\varepsilon^{N-4}(4-N)^2\omega\widetilde{K}^2\int_1^2\frac{r^{N+1}}{(\varepsilon^2+r^2)^{N-2}}dr\\
&\leq O(\varepsilon^{N-4}).
\end{aligned}
\end{equation}

Together with (\ref{eqn:Delta-U-varepsilon-2-2})-(\ref{eqn:Delta-u-varepsilon-2-2}), (\ref{eqn:A1-estimate}), (\ref{eqn:A2-estimate})-(\ref{eqn:A4-estimate}) and the following observations
$$
\int_{\frac{2}{\varepsilon}}^{\infty}\frac{r^{N-1}}{(1+r^2)^{N-2}}dr\le \int_{\frac{2}{\varepsilon}}^{\infty}r^{3-N}dr=O(\varepsilon^{N-4}),
$$
$$
\int_{\frac{2}{\varepsilon}}^{\infty}\frac{r^{N+3}}{(1+r^2)^{N}}dr\le \int_{\frac{2}{\varepsilon}}^{\infty}r^{3-N}dr=O(\varepsilon^{N-4}),
$$
$$
\int_{\frac{2}{\varepsilon}}^{\infty}\frac{r^{N+1}}{(1+r^2)^{N-1}}dr\le \int_{\frac{2}{\varepsilon}}^{\infty}r^{3-N}dr=O(\varepsilon^{N-4}),
$$
we infer that
$$
\|\Delta u_{\varepsilon}\|^2_2=\|\Delta U_{\varepsilon}\|^2_2+O(\varepsilon^{N-4}).
$$
In addition, from Lemma \ref{lem:U-bi-equation}, we know that $S\|U_\varepsilon\|^2_{4^*}=\|\Delta U_\varepsilon\|^2_2$ and
$\|\Delta U_\varepsilon\|^2_2=\|U_\varepsilon\|^{4^*}_{4^*}$. As a result, we conclude that
\begin{equation}\label{eqn:Delta-u-Delta-u-O-S}
\|\Delta u_{\varepsilon}\|^2_2=\|\Delta U_{\varepsilon}\|^2_2+O(\varepsilon^{N-4})=S^{\frac{N}{4}}+O(\varepsilon^{N-4}).
\end{equation}

Next, we turn to (\ref{eqn:4*-eatimate}). From the expression of $u_\varepsilon$, it gives that
$$
\begin{aligned}
\|u_{\varepsilon}\|^{4^*}_{4^*}&=\int_{\mathbb{R}^N}|\psi U_{\varepsilon}|^{4^*}dx\\
&=D_N^{4^*}\varepsilon^N\int_{\mathbb{R}^N}\frac{\psi^{4^*}}{(\varepsilon^2+|x|^2)^N}dx\\
&=D_N^{4^*}\varepsilon^N\int_{B_2(0)}\frac{\psi^{4^*}}{(\varepsilon^2+|x|^2)^N}dx+D_N^{4^*}\varepsilon^N\int_{\mathbb{R}^N\backslash B_2(0)}\frac{\psi^{4^*}}{(\varepsilon^2+|x|^2)^N}dx\\
&=D_N^{4^*}\varepsilon^N\int_{B_2(0)}\frac{\psi^{4^*}}{(\varepsilon^2+|x|^2)^N}dx\\
&=D_N^{4^*}\varepsilon^N\omega\int_0^2\frac{r^{N-1}\psi^{4^*}}{(\varepsilon^2+r^2)^{N}}dr\\
&=D_N^{4^*}\varepsilon^N\omega\int_0^2\frac{r^{N-1}}{(\varepsilon^2+r^2)^{N}}dr+D_N^{4^*}\varepsilon^N\omega\int_0^2\frac{r^{N-1}(\psi^{4^*}-1)}{(\varepsilon^2+r^2)^{N}}dr\\
&=D_N^{4^*}\varepsilon^N\omega\int_0^2\frac{r^{N-1}}{(\varepsilon^2+r^2)^{N}}dr+D_N^{4^*}\varepsilon^N\omega\int_1^2\frac{r^{N-1}(\psi^{4^*}-1)}{(\varepsilon^2+r^2)^{N}}dr\\
&=D_N^{4^*}\varepsilon^N\omega\int_0^2\frac{r^{N-1}}{(\varepsilon^2+r^2)^{N}}dr+O(\varepsilon^N)\\
&=D_N^{4^*}\omega\int_0^{\frac{2}{\varepsilon}}\frac{r^{N-1}}{(1+r^2)^N}dr+O(\varepsilon^N).
\end{aligned}
$$
Meanwhile, one also has
$$
\|U_{\varepsilon}\|^{4^*}_{4^*}=D_N^{4^*}\omega\int_0^{\infty}\frac{r^{N-1}}{(1+r^2)^N}dr,
$$
$$
\int_{\frac{2}{\varepsilon}}^{\infty}\frac{r^{N-1}}{(1+r^2)^N}dr\le \int_{\frac{2}{\varepsilon}}^{\infty}r^{-1-N}dr=O(\varepsilon^N)
$$
and
$$
\|\Delta U_\varepsilon\|^2_2=\|U_\varepsilon\|^{4^*}_{4^*}=S^\frac{N}{4}.
$$
Thereby, (\ref{eqn:4*-eatimate}) follows from the above facts evidently.

Finally, we discuss the estimate on $\|u_{\varepsilon}\|^p_p$. Noting that
\begin{equation}\label{eqn:u-p-p-direct-cal-G1-G2}
\begin{aligned}
\|u_{\varepsilon}\|^p_p&=\int_{\mathbb{R}^N}|\psi U_\varepsilon|^pdx\\
&=\int_{\mathbb{R}^N}D_N^p\varepsilon^{\frac{(N-4)p}{2}}\frac{\psi^p}{(\varepsilon^2+|x|^2)^\frac{(N-4)p}{2}}dx\\
&=\int_{B_2(0)}D_N^p\varepsilon^{\frac{(N-4)p}{2}}\frac{\psi^p}{(\varepsilon^2+|x|^2)^\frac{(N-4)p}{2}}dx
+\int_{\mathbb{R}^N\backslash B_2(0)}D_N^p\varepsilon^{\frac{(N-4)p}{2}}\frac{\psi^p}{(\varepsilon^2+|x|^2)^\frac{(N-4)p}{2}}dx\\
&=D_N^p\varepsilon^{\frac{(N-4)p}{2}}\omega\int_0^2\frac{r^{N-1}\psi^p}{(\varepsilon^2+r^2)^{\frac{(N-4)p}{2}}}dr\\
&=D_N^p\varepsilon^{\frac{(N-4)p}{2}}\omega\int_0^2\frac{r^{N-1}}{(\varepsilon^2+r^2)^{\frac{(N-4)p}{2}}}dr
+D_N^p\varepsilon^{\frac{(N-4)p}{2}}\omega\int_0^2\frac{r^{N-1}(\psi^p-1)}{(\varepsilon^2+r^2)^{\frac{(N-4)p}{2}}}dr\\
&=D_N^p\varepsilon^{\frac{(N-4)p}{2}}\omega\int_0^2\frac{r^{N-1}}{(\varepsilon^2+r^2)^{\frac{(N-4)p}{2}}}dr
+D_N^p\varepsilon^{\frac{(N-4)p}{2}}\omega\int_1^2\frac{r^{N-1}(\psi^p-1)}{(\varepsilon^2+r^2)^{\frac{(N-4)p}{2}}}dr\\
&=D_N^p\varepsilon^{N-\frac{(N-4)p}{2}}\omega\int_0^{\frac{2}{\varepsilon}}\frac{r^{N-1}}{(1+r^2)^{\frac{(N-4)p}{2}}}dr+O(\varepsilon^{\frac{(N-4)p}{2}}),
\end{aligned}
\end{equation}
in what follows, we distinguish it into three cases: $p>\frac{N}{N-4}$, $p=\frac{N}{N-4}$ and $p<\frac{N}{N-4}$.

$(i)$ If $p>\frac{N}{N-4}$, it ensures that
$$
\begin{aligned}
&D_N^p\varepsilon^{N-\frac{(N-4)p}{2}}\omega\int_0^{\frac{2}{\varepsilon}}\frac{r^{N-1}}{(1+r^2)^{\frac{(N-4)p}{2}}}dr\\
&=D_N^p\varepsilon^{N-\frac{(N-4)p}{2}}\omega\int_0^{\infty}\frac{r^{N-1}}{(1+r^2)^{\frac{(N-4)p}{2}}}dr-D_N^p\varepsilon^{N-\frac{(N-4)p}{2}}\omega\int_{\frac{2}{\varepsilon}}^{\infty}\frac{r^{N-1}}{(1+r^2)^{\frac{(N-4)p}{2}}}dr\\
&=:K \varepsilon^{N-\frac{(N-4)p}{2}}+o(\varepsilon^{N-\frac{(N-2)p}{2}}).
\end{aligned}
$$
Combining with the fact $\lim\limits_{\varepsilon\rightarrow 0}\frac{\varepsilon^{\frac{(N-4)p}{2}}}{\varepsilon^{N-\frac{(N-4)p}{2}}}=0$, we have
\begin{equation}\label{eqn:0.19}
\|u_{\varepsilon}\|^p_p=K\varepsilon^{N-\frac{(N-4)p}{2}}+o(\varepsilon^{N-\frac{(N-2)p}{2}}).
\end{equation}

$(ii)$ If $p=\frac{N}{N-4}$, it is easy to check that
\begin{equation}\label{eqn:G2-2}
\begin{aligned}
&D_N^p\varepsilon^{\frac{N}{2}}\omega\int_0^{\frac{2}{\varepsilon}}\frac{r^{N-1}}{(1+r^2)^{\frac{N}{2}}}dr\\
&=D_N^p\varepsilon^{\frac{N}{2}}\omega\int_0^1\frac{r^{N-1}}{(1+r^2)^{\frac{N}{2}}}dr
+D_N^p\varepsilon^{\frac{N}{2}}\omega\int_1^{\frac{2}{\varepsilon}}\frac{r^{N-1}}{(1+r^2)^{\frac{N}{2}}}dr\\
&\geq D_N^p\varepsilon^{\frac{N}{2}}\omega\Big(\int_0^1\frac{r^{N-1}}{(1+r^2)^{\frac{N}{2}}}dr+\frac{N}{2^{\frac{N}{2}}}ln\frac{2}{\varepsilon}\Big)\\
&=\left(D_N^p\omega\int_0^1\frac{r^{N-1}}{(1+r^2)^{\frac{N}{2}}}dr+\frac{N}{2^{\frac{N}{2}}}ln 2\right)\varepsilon^{\frac{N}{2}}+\frac{ND_N^p\omega}{2^{\frac{N}{2}}} \varepsilon^{\frac{N}{2}}|ln\varepsilon|.
\end{aligned}
\end{equation}
Taking into account of (\ref{eqn:u-p-p-direct-cal-G1-G2}) and (\ref{eqn:G2-2}), we obtain that
\begin{equation}\label{eqn:0.20}
\|u_{\varepsilon}\|^p_p=:K\varepsilon^{\frac{N}{2}}|ln\varepsilon|+O(\varepsilon^{\frac{N}{2}}).
\end{equation}

$(iii)$ If $2\le p<\frac{N}{N-4}$, we see that
$$
\lim\limits_{\varepsilon\rightarrow 0}\frac{r^{N-1}\psi^p(r)}{(\varepsilon^2+r^2)^{\frac{(N-4)p}{2}}}=\frac{\psi^p(r)}{r^{(N-4)p-(N-1)}} \in L^1([0,2]),
$$
which means that
$$
\begin{aligned}
\|u_{\varepsilon}\|^p_p&=D_N^p\varepsilon^{\frac{(N-4)p}{2}}\omega\int_0^2\frac{r^{N-1}\psi^p(r)}{(\varepsilon^2+r^2)^{\frac{(N-4)p}{2}}}dr\\
&=D_N^p\varepsilon^{\frac{(N-4)p}{2}}\omega\int_0^2\left(\frac{\psi^p(r)}{r^{(N-4)p-(N-1)}}+o(1)\right)dr\\
&=D_N^p\varepsilon^{\frac{(N-4)p}{2}}\omega\int_0^2\frac{\psi^p(r)}{r^{(N-4)p-(N-1)}}dr+o(\varepsilon^{\frac{(N-4)p}{2}})\\
&=:K\varepsilon^{\frac{(N-4)p}{2}}+o(\varepsilon^{\frac{(N-4)p}{2}}).
\end{aligned}
$$
\end{proof}

Thanks to the conclusions in Lemma \ref{lem:bih-eatumate-energy}, the following estimate on $\gamma_\mu(c)$ can be presented.
\begin{lemma}\label{Lem:gamma-mu-estimate}
Under the assumptions of Theorem \ref{Thm:normalized-bScritical-solutions-L2super+L2critical}, we have
\begin{equation}\label{eqn:gamma-mc-estn}
0<\gamma_\mu(c)<\frac{4+\alpha}{2(N+\alpha)}S^\frac{N+\alpha}{4+\alpha}_\alpha.
\end{equation}
\end{lemma}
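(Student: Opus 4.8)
The plan is to combine the variational identities already available with a sharp test-function computation. The lower bound $\gamma_\mu(c)>0$ needs nothing new: by Lemma~\ref{Lem:gamma-tildegamma-equal} one has $\gamma_\mu(c)=\widetilde\gamma_\mu(c)\ge I_*>0$. For the strict upper bound I would use Lemma~\ref{Lem:gammamu-infIu} to rewrite $\gamma_\mu(c)=m(c)=\inf_{u\in\mathcal P_r(c)}I(u)$, together with Lemma~\ref{Prop:I-u-maximum}, by which each $v\in S_r(c)$ admits a unique dilation $\mathcal H(v,s_v)\in\mathcal P_r(c)$ realizing $\max_{s\in\mathbb R}I(\mathcal H(v,s))$. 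Hence, substituting $\theta=e^{4s}$ and using (\ref{eqn:Delta-mathcalH-norm})--(\ref{eqn:mathcalH-critical-term}), with
\[
g_v(\theta):=\frac{\theta}{2}\|\Delta v\|_2^2-\frac{\mu}{p}\theta^{p\gamma_p/2}\|v\|_p^p-\frac{\theta^{4^*_\alpha}}{24^*_\alpha}\int_{\mathbb R^N}(I_\alpha*|v|^{4^*_\alpha})|v|^{4^*_\alpha}\,dx,
\]
one has $\gamma_\mu(c)\le\sup_{\theta>0}g_v(\theta)$, so it is enough to exhibit one $v=v_\varepsilon\in S_r(c)$ with $\sup_{\theta>0}g_{v_\varepsilon}(\theta)<\frac{4+\alpha}{2(N+\alpha)}S_\alpha^{\frac{N+\alpha}{4+\alpha}}$ for $\varepsilon$ small.

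As test function I would take $v_\varepsilon:=\sqrt{c}\,\widetilde u_\varepsilon/\|\widetilde u_\varepsilon\|_2$ with $\widetilde u_\varepsilon:=\psi\widetilde U_\varepsilon$, the cut-off of the extremal functions $U_\varepsilon,\widetilde U_\varepsilon$ of Lemma~\ref{lem:U-bi-equation} ($v_\varepsilon$ is radial, hence $v_\varepsilon\in S_r(c)$). Through the explicit proportionality $\widetilde U_\varepsilon\propto U_\varepsilon$ one transfers the estimates of Lemma~\ref{lem:bih-eatumate-energy} and the identity (\ref{eqn:widetilde-U}) to $\widetilde u_\varepsilon$, obtaining $\|\Delta\widetilde u_\varepsilon\|_2^2=(A_\alpha^{1/4^*_\alpha}S_\alpha)^{\frac{N+\alpha}{4+\alpha}}+O(\varepsilon^{N-4})$, $\int_{\mathbb R^N}(I_\alpha*|\widetilde u_\varepsilon|^{4^*_\alpha})|\widetilde u_\varepsilon|^{4^*_\alpha}\,dx=(A_\alpha S_\alpha)^{\frac{N+\alpha}{4+\alpha}}+O(\varepsilon^{N})$, the normalization order $\|\widetilde u_\varepsilon\|_2^2\asymp\varepsilon^{\min\{4,N-4\}}$ (with an extra $|\ln\varepsilon|$ if $N=8$), and the behaviour of $\|\widetilde u_\varepsilon\|_p^p$ given by the first line of (\ref{eqn:p-estimate}) --- indeed the hypotheses of Theorem~\ref{Thm:normalized-bScritical-solutions-L2super+L2critical} always force $p>\tfrac{N}{N-4}$, so only that regime arises. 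Denoting by $A_\varepsilon,B_\varepsilon,C_\varepsilon$ the Dirichlet, $L^p$ and Hartree energies of $v_\varepsilon$, the normalization constant cancels in the scale-invariant quotient $A_\varepsilon/C_\varepsilon^{1/4^*_\alpha}=\|\Delta\widetilde u_\varepsilon\|_2^2/\big(\int_{\mathbb R^N}(I_\alpha*|\widetilde u_\varepsilon|^{4^*_\alpha})|\widetilde u_\varepsilon|^{4^*_\alpha}\,dx\big)^{1/4^*_\alpha}=S_\alpha+O(\varepsilon^{N-4})$.

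To estimate the supremum I would first discard the non-positive middle term, which already yields $\sup_{\theta>0}g_{v_\varepsilon}(\theta)\le\sup_{\theta>0}\big(\tfrac{\theta}{2}A_\varepsilon-\tfrac{\theta^{4^*_\alpha}}{24^*_\alpha}C_\varepsilon\big)=\frac{4+\alpha}{2(N+\alpha)}\big(A_\varepsilon/C_\varepsilon^{1/4^*_\alpha}\big)^{\frac{N+\alpha}{4+\alpha}}=\frac{4+\alpha}{2(N+\alpha)}S_\alpha^{\frac{N+\alpha}{4+\alpha}}+O(\varepsilon^{N-4})$, and then sharpen this at the actual maximizer $\theta_{*,\varepsilon}$ of $g_{v_\varepsilon}$: since $\theta_{*,\varepsilon}<\theta_{1,\varepsilon}:=(A_\varepsilon/C_\varepsilon)^{1/(4^*_\alpha-1)}$ and $g_{v_\varepsilon}(\theta_{*,\varepsilon})=\big(\tfrac{\theta_{*,\varepsilon}}{2}A_\varepsilon-\tfrac{\theta_{*,\varepsilon}^{4^*_\alpha}}{24^*_\alpha}C_\varepsilon\big)-\tfrac{\mu}{p}\theta_{*,\varepsilon}^{p\gamma_p/2}B_\varepsilon$, bounding the bracket by its maximum over $\theta$ gives $\gamma_\mu(c)\le\frac{4+\alpha}{2(N+\alpha)}S_\alpha^{\frac{N+\alpha}{4+\alpha}}+O(\varepsilon^{N-4})-\tfrac{\mu}{p}\theta_{*,\varepsilon}^{p\gamma_p/2}B_\varepsilon$. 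Since $p\gamma_p\ge2$ and, \emph{crucially}, $p<4^*$, inspecting the sign of $g_{v_\varepsilon}'$ near $\theta_{1,\varepsilon}$ shows that $\theta_{*,\varepsilon}$ is comparable to $\theta_{1,\varepsilon}\asymp\varepsilon^{\min\{4,N-4\}}$, so the loss $\theta_{*,\varepsilon}^{p\gamma_p/2}B_\varepsilon$ has a definite order in $\varepsilon$: of order $1$ when $N\ge8$, of order $\varepsilon^{3(10-p)/8}$ when $N=5$, and of an intermediate power when $N=6,7$. One concludes by comparing this order with the defect $\varepsilon^{N-4}$: the loss dominates automatically for $N\ge6$ (and trivially for $N\ge8$, where it is of order one while the defect is $o(1)$), whereas for $N=5$ domination is precisely $\tfrac{3(10-p)}{8}<1$, i.e. $p>\tfrac{22}{3}$ --- exactly the stated hypothesis. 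The main obstacle I anticipate is this last bookkeeping step: propagating the cut-off estimates to $\widetilde u_\varepsilon$ (in particular the Hartree-energy expansion), establishing the two-sided control of the dilation maximizer $\theta_{*,\varepsilon}$ that uses $p<4^*$, and checking case by case in $N$ that the resulting inequality of exponents is saturated exactly by the assumptions of Theorem~\ref{Thm:normalized-bScritical-solutions-L2super+L2critical}.
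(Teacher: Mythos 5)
Your proposal is correct and follows essentially the same route as the paper: reduce via Lemmas \ref{Lem:gamma-tildegamma-equal} and \ref{Lem:gammamu-infIu} to bounding $m(c)$, test with a cut-off bubble normalized to lie in $S_r(c)$, show the optimal dilation parameter is bounded above and below (your control of $\theta_{*,\varepsilon}$ is exactly the paper's claim that $e^{s_{v_\varepsilon}}\in[e^{s_0},e^{s_1}]$, which for $p=\bar p$ also uses $c<c_0$), and check that the $L^p$ gain beats the cut-off defects --- and your exponent bookkeeping, including the observation that the hypotheses force $p>\tfrac{N}{N-4}$, reproduces the paper's exponents exactly, your constant-multiple normalization being equivalent to the paper's $L^2$-rescaling in $x$ up to the action of $\mathcal H$. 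The only imprecision is that the lower bound on the Hartree energy of the truncated bubble carries an error $O(\varepsilon^{\frac{N+\alpha}{2}})$ (as in \eqref{eqn:U4*-leq-estimate}) rather than $O(\varepsilon^{N})$, but this extra defect is still dominated by the $L^p$ loss in every case, so the argument goes through.
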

\begin{proof}
Owing to Lemmas \ref{Lem:gamma-tildegamma-equal} and \ref{Lem:gammamu-infIu}, we only need to show that
$$
m(c)<\frac{4+\alpha}{2(N+\alpha)}S^\frac{N+\alpha}{4+\alpha}_\alpha.
$$
In view of \cite[Theorem 2.1]{Rani-Goyal2022}, we know that
\begin{equation}\label{eqn:S-qlpha-S-ACN}
S_\alpha=\frac{S}{(A_\alpha C(N,\alpha))^\frac{1}{4_\alpha^*}},
\end{equation}
which, together this with \cite[Lemma 4.3]{Chen-chen2023}, (\ref{eqn:H-L-S}) and (\ref{eqn:4*-eatimate}), yields that
\begin{equation}\label{eqn:U4*-leq-estimate}
\begin{aligned}
(A_\alpha C(N,\alpha))^\frac{N}{4}S_\alpha^\frac{N+\alpha}{4}+O(\varepsilon^\frac{N+\alpha}{2})&\leq\int_{\mathbb{R}^N}(I_\alpha*| u_\varepsilon|^{4^*_\alpha})| u_\varepsilon|^{4^*_\alpha}dx\\
&=A_\alpha\int_{\mathbb{R}^N}\int_{\mathbb{R}^N}\frac{|u_\varepsilon(x)|^{4^*_\alpha}|u_\varepsilon(y)|^{4^*_\alpha}}{|x-y|^{N-\alpha}}dxdy\\
&\leq A_\alpha C(N,\alpha) \|u_\varepsilon\|_{4^*}^{24^*_\alpha}\\
&=A_\alpha C(N,\alpha) \Big(S^\frac{N}{4}+O(\varepsilon^{N})\Big)^\frac{N+\alpha}{N}\\
&=A_\alpha C(N,\alpha) \Big[S^\frac{N}{4}_\alpha(A_\alpha C(N,\alpha))^\frac{N(N-4)}{4(N+\alpha)}+O(\varepsilon^{N})\Big]^\frac{N+\alpha}{N}\\
&=(A_\alpha C(N,\alpha))^\frac{N}{4} S^\frac{N+\alpha}{4}_\alpha+O(\varepsilon^{N}).
\end{aligned}
\end{equation}
Define $v_\varepsilon:=\Big(\frac{\|u_\varepsilon\|^2_2}{c}\Big)^\frac{N-4}{8}u_\varepsilon\Big(\Big(\frac{\|u_\varepsilon\|^2_2}{c}\Big)^\frac{1}{4}x\Big)$, then a direct calculation implies that
$$
\int_{\mathbb R^N}|v_\varepsilon|^2dx=c,\ \ \int_{\mathbb R^N}|\Delta v_\varepsilon|^2dx=\int_{\mathbb R^N}|\Delta u_\varepsilon|^2dx
$$
and
$$
\int_{\mathbb R^N}(I_\alpha *|v_\varepsilon|^{4^*_\alpha})|v_\varepsilon|^{4^*_\alpha}dx=\int_{\mathbb R^N}(I_\alpha *|u_\varepsilon|^{4^*_\alpha})|u_\varepsilon|^{4^*_\alpha}dx.
$$
Moreover, for some $K'>0$ and $\varepsilon$ small enough, it follows from (\ref{eqn:p-estimate}) that
\begin{equation}\label{eqn:v-epsilon-estn}
\begin{aligned}
\int_{\mathbb R^N}|v_\varepsilon|^pdx&=\Big(\frac{\|u_\varepsilon\|^2_2}{c}\Big)^\frac{(N-4)p-2N}{8}\int_{\mathbb R^N}|u_\varepsilon|^pdx\\
&\begin{cases} =K' c^\frac{2N-(N-4)p}{8}\varepsilon^{\frac{5}{8}p-\frac{5}{4}}+o(\varepsilon^{\frac{5p}{8}-\frac{5}{4}}), & \text { if } N=5,\frac{18}{5}\leq p<5;\\
 \geq K' c^\frac{2N-(N-4)p}{8}\varepsilon^\frac{15}{8}|ln\varepsilon|+o\Big(\varepsilon^\frac{15}{8}|ln \varepsilon|\Big), & \text { if } N=5,p = 5; \\
= K' c^\frac{2N-(N-4)p}{8}\varepsilon^{\frac{3}{4}(5-\frac{p}{2})}+o(\varepsilon^{\frac{15}{4}-\frac{3p}{8}}), & \text { if } N=5,p>5; \\
 =K' c^\frac{2N-(N-4)p}{8}\varepsilon^{\frac{8-N}{4}(N-\frac{(N-4)p}{2})}+o(\varepsilon^{\frac{8-N}{4}(N-\frac{(N-4)p}{2})}), & \text { if } N=6,7; \\
 \geq K' c^\frac{2N-(N-4)p}{8}|ln \varepsilon|^\frac{p-4}{2}+o(1) |ln \varepsilon|^\frac{p-4}{2}+O(1), & \text { if } N=8; \\
 =K' c^\frac{2N-(N-4)p}{8}+o(1), & \text { if } N\geq 9.\\
\end{cases}
\end{aligned}
\end{equation}

Using Proposition \ref{Prop:I-u-maximum}, we find that there exists $s_{v_\varepsilon}$ such that $\mathcal H(v_\varepsilon,s_{v_\varepsilon})\in \mathcal P_r(c)$. Combining this with (\ref{eqn:delta-eatimate}), (\ref{eqn:p-estimate}), (\ref{eqn:U4*-leq-estimate}) and (\ref{eqn:v-epsilon-estn}), we conclude that
there holds that
\begin{equation}\label{eqn:mc-estimate}
\begin{aligned}
m(c)&\leq I(\mathcal H(v_\varepsilon,s_{v_\varepsilon}))\\
&=\frac{e^{4s_{v_\varepsilon}}}{2}\int_{\mathbb{R}^N}|\Delta v_\varepsilon|^2dx-\frac{\mu e^{2p\gamma_p s_{v_\varepsilon}}}{p}\int_{\mathbb{R}^N}|v_\varepsilon|^p dx-\frac{e^{44^*_\alpha s_{v_\varepsilon}}}{24^*_\alpha}\int_{\mathbb{R}^N}(I_\alpha*|v_\varepsilon|^{4^*_\alpha})|v_\varepsilon|^{4^*_\alpha} dx\\
&\leq \frac{e^{4s_{v_\varepsilon}}}{2}\Big(S^\frac{N}{4}+O(\varepsilon^{N-4})\Big)-\frac{e^{44^*_\alpha s_{v_\varepsilon}}}{24^*_\alpha}\Big((A_\alpha C(N,\alpha))^\frac{N}{4}S_\alpha^\frac{N+\alpha}{4}+O(\varepsilon^\frac{N+\alpha}{2})\Big)\\
&\ \ \ -\frac{\mu e^{2p\gamma_p s_{v_\varepsilon}}}{p}K' c^\frac{2N-(N-4)p}{8}
\begin{cases}
\varepsilon^{\frac{5}{8}p-\frac{5}{4}}+o(\varepsilon^{\frac{5}{8}p-\frac{5}{4}}), & \mbox{if} \,\, N=5,\frac{18}{5}\leq p<5;\\
\varepsilon^\frac{15}{8}|ln\varepsilon|+o(\varepsilon^\frac{15}{8}|ln\varepsilon|), & \mbox{if} \,\,p = 5; \\
\varepsilon^{\frac{3}{4}(N-\frac{(N-4)p}{2})}+o(\varepsilon^{\frac{3}{4}(N-\frac{(N-4)p}{2})}), & \mbox{if} \,\, N=5,p>5; \\
\varepsilon^{\frac{8-N}{4}(N-\frac{(N-4)p}{2})}+o(\varepsilon^{\frac{8-N}{4}(N-\frac{(N-4)p}{2})}), &  \mbox{if} \,\, N=6,7; \\
|ln \varepsilon|^\frac{p-4}{2}+o(1) |ln \varepsilon|^\frac{p-4}{2}+O(1), &  \mbox{if} \,\, N=8; \\
1+o(1), &  \mbox{if} \,\, N\geq 9\\
\end{cases}\\
&=\frac{e^{4s_{v_\varepsilon}}}{2}\Big((A_\alpha C(N,\alpha))^\frac{N(N-4)}{4(N+\alpha)}S^\frac{N}{4}_\alpha+O(\varepsilon^{N-4})\Big)-\frac{e^{44^*_\alpha s_{v_\varepsilon}}}{24^*_\alpha}\Big((A_\alpha C(N,\alpha))^\frac{N}{4}S_\alpha^\frac{N+\alpha}{4}+O(\varepsilon^\frac{N+\alpha}{2})\Big)\\
&\ \ \ -\frac{\mu e^{2p\gamma_p s_{v_\varepsilon}}}{p}K' c^\frac{2N-(N-4)p}{8}
\begin{cases}
\varepsilon^{\frac{5}{8}p-\frac{5}{4}}+o(\varepsilon^{\frac{5}{8}p-\frac{5}{4}}), &  \mbox{if} \,\, N=5,\frac{18}{5}\leq p<5;\\
\varepsilon^\frac{15}{8}|ln\varepsilon|+o(\varepsilon^\frac{15}{8}|ln\varepsilon|), &  \mbox{if} \,\, N=5,p = 5; \\
\varepsilon^{\frac{3}{4}(N-\frac{(N-4)p}{2})}+o(\varepsilon^{\frac{3}{4}(N-\frac{(N-4)p}{2})}), &  \mbox{if} \,\, N=5,p>5; \\
\varepsilon^{\frac{8-N}{4}(N-\frac{(N-4)p}{2})}+o(\varepsilon^{\frac{8-N}{4}(N-\frac{(N-4)p}{2})}), &  \mbox{if} \,\, N=6,7; \\
|ln \varepsilon|^\frac{p-4}{2}+o(1) |ln \varepsilon|^\frac{p-4}{2}+O(1), &  \mbox{if} \,\, N=8; \\
1+o(1), &  \mbox{if} \,\, N\geq 9.\\
\end{cases}
\end{aligned}
\end{equation}

We claim that there exist $e^{s_0}, e^{s_1}>0$ independent of $\varepsilon$ such that $e^{s_{v_\varepsilon}} \in[e^{s_0}, e^{s_1}]$ for $\varepsilon>0$ small enough. Suppose by contradiction that $e^{s_{v_\varepsilon}} \rightarrow +\infty$ or $e^{s_{v_\varepsilon}} \rightarrow 0$ as $\varepsilon \rightarrow 0$, that is, $s_{v_\varepsilon} \rightarrow +\infty$ or $s_{v_\varepsilon} \rightarrow -\infty$ as $\varepsilon \rightarrow 0$. The fact that $\mathcal H(v_\varepsilon,s_{v_\varepsilon})\in \mathcal P_r(c)$ yields that
\begin{equation}\label{eqn:contradiction-prove}
\int_{\mathbb{R}^N}|\Delta v_\varepsilon|^2dx=\mu\gamma_pe^{(2p\gamma_p-4) s_{v_\varepsilon}}\int_{\mathbb{R}^N}|v_\varepsilon|^p dx+e^{(44^*_\alpha-4) s_{v_\varepsilon}}\int_{\mathbb{R}^N}(I_\alpha*|v_\varepsilon|^{4^*_\alpha})|v_\varepsilon|^{4^*_\alpha} dx.
\end{equation}
However, when $p>2+\frac{8}{N}$, the left side of (\ref{eqn:contradiction-prove}) is $S^{\frac{N}{4}}+O(\varepsilon^{N-4})>0$ for small $\varepsilon$, while its right side tends to $\infty$ as $s_{v_\varepsilon} \rightarrow +\infty$ and tends to $0$ as $s_{v_\varepsilon} \rightarrow -\infty$, which is impossible; when $p=\bar{p}$ and $s_{v_\varepsilon} \rightarrow +\infty$, we find a similar contradiction since the right side of (\ref{eqn:contradiction-prove}) tends to $\infty$; when $p=\bar{p}$, it follows from $c\in(0,c_0)$ and (\ref{eqn:GNinequality}) that
$$
\mu\gamma_{\bar{p}}\|v_\varepsilon\|^{\bar{p}}_{\bar{p}}\leq \mu\gamma_{\bar{p}}C^{\bar{p}}_{N,{\bar{p}}}c^\frac{4}{N}\|\Delta v_\varepsilon\|^2_2<\mu\gamma_{\bar{p}}C^{\bar{p}}_{N,{\bar{p}}} \frac{\bar{p}}{4\mu C^{\bar{p}}_{N,{\bar{p}}}}\|\Delta v_\varepsilon\|^2_2<\|\Delta v_\varepsilon\|^2_2=S^{\frac{N}{4}}+O(\varepsilon^{N-4}),
$$
which means that (\ref{eqn:contradiction-prove}) is absurd when $s_{v_\varepsilon} \rightarrow -\infty$. Thus, the claim holds.

Observe that
\begin{itemize}
\item[$(i)$] if $N=5$, $p>\frac{22}{3}$, it leads to $\frac{3}{4}(5-\frac{(5-4)p}{2})<1<\frac{5+\alpha}{2}$;
\item[$(ii)$] if $N = 6,7,8$, it gives that $N-4<\frac{N+\alpha}{2}$.
\end{itemize}
Then, $O(\varepsilon^{N-4})$ and $O(\varepsilon^\frac{N+\alpha}{2})$ in (\ref{eqn:mc-estimate}) can be controlled by the last term for $\varepsilon>0$ small enough. Consequently, we infer that
\begin{equation}\label{eqn:mc-maximum}
\begin{aligned}
m(c) &\leq I(\mathcal H(v_\varepsilon,s_{v_\varepsilon}))\\
&<\sup _{s > 0}\Big(\frac{e^{4s}}{2}(A_\alpha C(N,\alpha))^\frac{N(N-4)}{4(N+\alpha)}S^\frac{N}{4}_\alpha-\frac{e^{44^*_\alpha s}}{24^*_\alpha}(A_\alpha C(N,\alpha))^\frac{N}{4}S_\alpha^\frac{N+\alpha}{4}\Big).
\end{aligned}
\end{equation}
To reach the conclusion, consider
$$
f(s):=\frac{e^{4s}}{2}(A_\alpha C(N,\alpha))^\frac{N(N-4)}{4(N+\alpha)}S^\frac{N}{4}_\alpha-\frac{e^{44^*_\alpha s}}{24^*_\alpha}(A_\alpha C(N,\alpha))^\frac{N}{4}S_\alpha^\frac{N+\alpha}{4}.
$$
A direct calculation gives that
$$
f'(s)=2e^{4s}(A_\alpha C(N,\alpha))^\frac{N(N-4)}{4(N+\alpha)}S^\frac{N}{4}_\alpha-2e^{44^*_\alpha s}(A_\alpha C(N,\alpha))^\frac{N}{4}S_\alpha^\frac{N+\alpha}{4},
$$
which means $f(s)$ has a unique maximum point $s_0$ satisfying
$$
\begin{aligned}
e^{ s_0}&=(A_\alpha C(N,\alpha))^{-\frac{N(4+\alpha)}{4(N+\alpha)}\frac{1}{(44^*_\alpha-4)}}S^{-\frac{\alpha}{4}\frac{1}{(44^*_\alpha-4)}}_\alpha\\
&=(A_\alpha C(N,\alpha))^{-\frac{N(4+\alpha)}{4(N+\alpha)}\frac{N-4}{4(4+\alpha)}}S^{-\frac{\alpha}{4}\frac{N-4}{4(4+\alpha)}}_\alpha\\
&=(A_\alpha C(N,\alpha))^{-\frac{N(N-4)}{16(N+\alpha)}}S^{-\frac{\alpha(N-4)}{16(4+\alpha)}}_\alpha.
\end{aligned}
$$
Therefore, we have
$$
\begin{aligned}
f(s_0)&=\frac{e^{4s_0}}{2}(A_\alpha C(N,\alpha))^\frac{N(N-4)}{4(N+\alpha)}S^\frac{N}{4}_\alpha-\frac{e^{44^*_\alpha s_0}}{24^*_\alpha}(A_\alpha C(N,\alpha))^\frac{N}{4}S_\alpha^\frac{N+\alpha}{4}\\
&=\frac{1}{2}(A_\alpha C(N,\alpha))^{-\frac{N(N-4)}{4(N+\alpha)}}S^{-\frac{\alpha(N-4)}{4(4+\alpha)}}_\alpha(A_\alpha C(N,\alpha))^\frac{N(N-4)}{4(N+\alpha)}S^\frac{N}{4}_\alpha\\
&\ \ \ -\frac{1}{24^*_\alpha}(A_\alpha C(N,\alpha))^{-\frac{N(N-4)}{16(N+\alpha)}\frac{4(N+\alpha)}{N-4}}S^{-\frac{\alpha(N-4)}{16(4+\alpha)}\frac{4(N+\alpha)}{N-4}}_\alpha
(A_\alpha C(N,\alpha))^\frac{N}{4}S_\alpha^\frac{N+\alpha}{4}\\
&=\frac{1}{2}S^{-\frac{\alpha(N-4)}{4(4+\alpha)}}_\alpha S^\frac{N}{4}_\alpha-\frac{1}{24^*_\alpha}S_\alpha^{-\frac{\alpha(N-\alpha)}{4(4+\alpha)}}S_\alpha^\frac{N+\alpha}{4}\\
&=\Big(\frac{1}{2}-\frac{1}{24^*_\alpha}\Big)S^{\frac{N+\alpha}{4+\alpha}}_\alpha\\
&=\frac{4+\alpha}{2(N+\alpha)}S_\alpha^\frac{N+\alpha}{4+\alpha}.
\end{aligned}
$$
Together this with (\ref{eqn:mc-maximum}), Lemmas \ref{Lem:gamma-tildegamma-equal} and \ref{Lem:gammamu-infIu}, we conclude that
$$
0<m(c)=\gamma_\mu(c)<\frac{4+\alpha}{2(N+\alpha)}S_\alpha^\frac{N+\alpha}{4+\alpha}.
$$
\end{proof}

\textbf{Proof of Theorem \ref{Thm:normalized-bScritical-solutions-L2super+L2critical}.} Lemma \ref{Lem:PS-true} and Lemma \ref{Lem:gamma-mu-estimate} ensure the existence of a \emph{(PS) }sequence $\{u_n\}\subset S_r(c)$ for $I|_{S_r(c)}$ at level $\gamma_\mu(c)$ satisfying (\ref{eqn:gamma-mc-estn}) and $P(u_n)\to 0$ as $n\to\infty$. Thus, one of the two alternatives in Lemma \ref{pro:solution-either-or-true} holds. Suppose that $(i)$ of Lemma \ref{pro:solution-either-or-true} occurs, that is, up to a subsequence, there exists $u_{\mu,c} \in H^2_r(\mathbb{R}^N)$ such that $u_n \rightharpoonup u_{\mu,c} \neq 0$ in $H^2_r(\mathbb{R}^N)$ and $u_{\mu,c}$ solves (\ref{eqn:BS-equation-L2-Super+Critical}) for some $\lambda_{\mu,c}<0$. Meanwhile, we also have
\begin{equation}\label{eqn:Iu-geq-0}
I(u_{\mu,c}) \leq \gamma_\mu(c)-\frac{4+\alpha}{2(N+\alpha)}S_\alpha^\frac{N+\alpha}{4+\alpha}<0.
\end{equation}
However, since $P(u_{\mu,c})=0$ and $u_{\mu,c} \neq 0$, for $\bar{p}\leq p<2_\alpha^*$, it gives that
$$
I(u_{\mu,c})=\Big(\frac{\gamma_p}{2}-\frac{1}{p}\Big)\mu \|u_{\mu,c}\|^p_p+\Big(\frac{1}{2}-\frac{1}{2 4_\alpha^*}\Big) \int_{\mathbb R^N}(I_\alpha*|u_{\mu,c}|^{4^*_\alpha})|u_{\mu,c}|^{4^*_\alpha}dx>0,
$$
an obvious contradiction with (\ref{eqn:Iu-geq-0}). Then, $(ii)$ of Lemma \ref{pro:solution-either-or-true} holds true, in other words, $u_n \rightarrow u_{\mu,c}$ in $H^2_r(\mathbb{R}^N)$ and $u_{\mu,c}$ is a normalized solution to (\ref{eqn:BS-equation-L2-Super+Critical}) for $\lambda_{\mu,c}<0$ with $I(u_{\mu,c})=\gamma_\mu(c)$.

In what follows, we demonstrate that $u_{\mu,c}$ is nonnegative. Indeed, since the functional $I(u)$ is even, the continuous path $g_n(t)=((g_n)_1(t),0)$ used in Lemma \ref{Lem:PS-true}-\emph{(iv)} to verify the inequality
$$
\min\limits_{t\in[0,1]}\|(v_n,s_n)-g_n(t)\|^2_E\leq\frac{1}{n}\ \mbox{for all}\ n\in \mathbb N
$$
can be fixed with $(g_n)_1(t)\geq0$ for all $t\in[0,1]$. From this fact, it follows that there exists $t_n\in[0,1]$ such that $\nu_n:=(g_n)_1(t_n)\geq0$ and
$$
\|v_n-\nu_n\|^2_{H^2}+|s_n|^2_{\mathbb{R}}=\min\limits_{t\in[0,1]}\|(v_n,s_n)-g_n(t)\|^2_E\leq\frac{1}{n}\ \mbox{for all}\ n\in \mathbb N.
$$
As a consequence, we infer that
$$
\|\Delta \mathcal H(v_n,s_n)-\Delta \mathcal H(\nu_n,s_n)\|^2_2=e^{4s_n}\|\Delta v_n-\Delta \nu_n\|^2_2\leq\frac{e^{4s_n}}{n}
$$
and
$$
\|\mathcal H(v_n,s_n)-\mathcal H(\nu_n,s_n)\|^2_2=\|v_n-\nu_n\|^2_2\leq\frac{1}{n},
$$
which state that $\|u_n-\mathcal H(\nu_n,s_n)\|_{H^2}\!\to \!0$ as $n\!\to\! \infty.$ As
$$
\mathcal H(\nu_n,s_n)(x)=e^\frac{Ns_n}{2}\nu_n(e^{s_n}x)\geq0
$$ and $u_n\to u_{\mu,c}$ in
$H^2_r(\mathbb{R}^N)$, up to a subsequence, we must have $u_{\mu,c}\geq0$ for almost every $x\in\mathbb{R}^N$.

Lastly, we discuss the asymptotic behavior in Theorem in \ref{Thm:normalized-bScritical-solutions-L2super+L2critical}. Note that $u_{\mu,c}$ belongs to $\mathcal P_r(c)$, that is, $P(u_{\mu,c})=0$. Then, it brings that
$$
\begin{aligned}
\frac{4+\alpha}{2(N+\alpha)}S_\alpha^\frac{N+\alpha}{4+\alpha}&>I(u_{\mu,c})\\
&=I(u_{\mu,c})-\frac{1}{24^*_\alpha}P(u_{\mu,c})\\
&= \Big(\frac{1}{2}-\frac{1}{24^*_\alpha}\Big)\int_{\mathbb{R}^N}|\Delta u_{\mu,c}|^2 dx+\Big(\frac{\gamma_p}{24^*_\alpha}-\frac{1}{p}\Big)\mu\int_{\mathbb R^N}|u_{\mu,c}|^pdx\\
&\geq \Big(\frac{1}{2}-\frac{1}{24^*_\alpha}\Big)\int_{\mathbb{R}^N}|\Delta u_{\mu,c}|^2 dx,
\end{aligned}
$$
which means that $\{u_{\mu,c}\}$ is uniformly bounded in $H^2(\mathbb{R}^N)$ with respect to $\mu>0$. Therefore, the continuity of $H^2(\mathbb R^N)\hookrightarrow L^s(\mathbb R^N)$ for $s\in[2,4^*]$ indicates that
\begin{equation}\label{eqn:lim-p-0}
\lim _{\mu \to 0^+} \mu \int_{\mathbb R^N}|u_{\mu,c}|^pdx=0.
\end{equation}
Once again, employing $P(u_{\mu,c})=0$ ensures that
\begin{equation}\label{eqn:nabla-2-star=0}
\lim _{\mu \to 0^+}\|\Delta u_{\mu,c}\|_2^2
=\lim _{\mu \rightarrow 0^+} \int_{\mathbb R^N}(I_\alpha*|u_{\mu,c}|^{4^*_\alpha})|u_{\mu,c}|^{4^*_\alpha}dx:=l,
\end{equation}
up to a subsequence. According to (\ref{eqn:S-alpha-define}), it can be inferred that either
$$
l=0 \quad \text { or } \quad l \geq S_\alpha^{\frac{N+\alpha}{4+\alpha}}.
$$
If $l=0$, obviously (\ref{eqn:lim-p-0}) and (\ref{eqn:nabla-2-star=0}) signify that $I(u_{\mu,c}) \to 0$ as $\mu \to 0^+$, which
contradicts with the fact obtained in (\ref{eqn:gamma-mu-widegamma-mu}). At this stage, taking into account $P(u_{\mu,c})=0$ and (\ref{eqn:lim-p-0}), we
derive that
$$
\begin{aligned}
\frac{4+\alpha}{2(N+\alpha)} S_\alpha^{\frac{N+\alpha}{4+\alpha}}&\!>\!I(u_{\mu,c}) \\
& =\frac{4+\alpha}{2(N+\alpha)}\|\Delta u_{\mu,c}\|_2^2+o_\mu(1) \\
& \geq \frac{4+\alpha}{2(N+\alpha)} S_\alpha^{\frac{N+\alpha}{4+\alpha}}+o_\mu(1),
\end{aligned}
$$
which yields that
$$
\lim _{\mu \to 0^+} I_{\mu,\gamma}(u_{\mu,c})=\frac{4+\alpha}{2(N+\alpha)} S_\alpha^{\frac{N+\alpha}{4+\alpha}}.
$$

To discuss asymptotical behavior as $c\rightarrow 0^+$, let $(u_{\mu,c}, \lambda_{u_{\mu,c}})$ be a solution of problem (\ref{eqn:BS-equation-L2-Super+Critical}) and set
$$
v_{\mu,c}(x):=(-\lambda_{u_{\mu,c}})^{-\frac{N-4}{8}}{u_{\mu,c}}((-\lambda_{u_{\mu,c}})^{-\frac{1}{4}} x).
$$
Then, it is easy to verify that $(v_{\mu,c}, t_{\mu,c})$ solves the following problem
\begin{equation}\label{eqn:v-u-equation}
\left\{\begin{array}{l}
{\Delta}^2v-(I_\alpha*|v|^{4^*_\alpha})|v|^{4^*_\alpha-2}v-t_{\mu,c}|v|^{p-2}v= v\ \ \mbox{in}\ \mathbb{R}^N, \\[0.1cm]
\int_{\mathbb{R}^N}v^2 d x=c \left(\frac{t_{\mu,c}}{\mu }\right)^\frac{8}{(N-4)p-2N},
\end{array}\right.
\end{equation}
where $t_{\mu,c}:=\mu (-\lambda_{u_{\mu,c}})^\frac{(N-4)p-2N}{8}$. Conversely, assume that $(v_{\mu,c}, t_{\mu,c})$ is a solution of
problem (\ref{eqn:v-u-equation}), denote by
$$
\lambda_{u_{\mu,c}}:=-\left(\frac{t_{\mu,c}}{\mu }\right)^\frac{8}{(N-4)p-2N}\ \ \ \mbox{and}\ \ \
u_{\mu,c}(x):=(-\lambda_{u_{\mu,c}})^\frac{N-4}{8} v_{\mu,c}((-\lambda_{u_{\mu,c}})^\frac{1}{4} x),
$$
it is obvious that $(u_{\mu,c},\lambda_{u_{\mu,c}})$ is a solution of problem (\ref{eqn:BS-equation-L2-Super+Critical}). Furthermore,
with regard to solution $(v_{\mu,c}, t_{\mu,c})$ of problem (\ref{eqn:v-u-equation}), considering
$$
\eta_{\mu,c}:=-t_{\mu,c}^\frac{8}{(N-4)p-2N}\ \ \ \mbox{and}\ \ \  \omega_{\mu,c}(x):=(-\eta_{\mu,c})^\frac{N-4}{8} v_{\mu,c}((-\eta_{\mu,c})^\frac{1}{4}x),
$$
we see that $(\omega_{\mu,c},\eta_{\mu,c})$ is a solution for the following problem
\begin{equation}\label{eqn:omega-eta}
\left\{\begin{array}{l}
{\Delta}^2\omega-(I_\alpha*|\omega|^{4^*_\alpha})|\omega|^{4^*_\alpha-2}v-|u|^{p-2}u=\eta \omega\ \ \mbox{in}\ \mathbb{R}^N, \\[0.1cm]
\int_{\mathbb{R}^N}\omega^2 d x=c \mu^\frac{8}{2N-(N-4)p}.
\end{array}\right.
\end{equation}
Meanwhile, if $(\omega_{\mu,c}, \eta_{\mu,c})$ is a solution of problem (\ref{eqn:omega-eta}), then by letting
$$
t_{\mu,c}:=(-\eta_{\mu,c})^\frac{(N-4)p-2N}{8}\ \ \ \mbox{and}\ \ \ v_{\mu,c}(x):=(-\eta_{\mu,c})^{-\frac{N-4}{8}} \omega_{\mu,c}((-\eta_{\mu,c})^{-\frac{1}{4}} x),
$$
we readily check that $(v_{\mu,c},t_{\mu,c})$ is also a solution of (\ref{eqn:v-u-equation}).

Based on the above arguments, finding solutions of problem (\ref{eqn:BS-equation-L2-Super+Critical}) is equivalent to investigate the existence of solutions of (\ref{eqn:omega-eta}). In particular, the asymptotical behavior as $c\to 0^+$ is same to what happened when $\mu\to 0^+$, that is, (\ref{eqn:lim-mc-c0}) holds true.
\qed
\\

\noindent\textbf{Acknowledgements}
Z.H. Zhang was partly supported by the NSFC (Grant No. 12371402). H.R. Sun was partly supported by the NSF of Gansu Province of China (Grant No. 21JR7RA535, 24JRRA414).
\\[-0.1cm]

\noindent \textbf{Data Availability}
The authors declare that data sharing is not applicable to this article as no data sets were generated or analyzed during the current study.
\\[-0.1cm]

\noindent\textbf{Conflict of interest} The authors declare that they have no conflict of interest.

\bibliographystyle{elsarticle-num}

\begin{thebibliography}{99}
%\bibitem{Alves-Ji-Miyagaki2022} C. O. Alves, C. Ji and O. H. Miyagaki,
%\textit{Normalized solutions for a Schr\"{o}dinger equation with critical growth in $\mathbb R^N$},
%Calc. Var. Partial Differential Equations., {\bf 61}(2022), 18.



\bibitem{Banquet-Villamizar-Roa2020} C. Banquet and \'{E}. J. Villamizar-Roa,
\textit{On the management fourth-order Schr\"{o}dinger-Hartree equation},
Evol. Equa. Contr. Theor., {\bf 9}(2020), 865-889.



\bibitem{Bellazzini-Visciglia2010} J. Bellazzini and N. Visciglia,
\textit{On the Orbital Stability for a Class of Nonautonomous NLS},
Indiana Univ. Math. J., {\bf 59}(2010), 1211-1230.



\bibitem{Berestycki-Lions-1983}H. Berestycki and P. Lions,
\textit{Nonlinear scalar filed equation II},
Arch. Rat. Mech. Anal., {\bf 82} (1983), 347-376.



%\bibitem{Bonheure-Casteras-Gou-Jeanjean2019}D. Bonheure, J. Casteras, T. Gou and L. Jeanjean,
%\textit{ Normalized solutions to the mixed dispersion nonlinear Schr\"{o}dinger equation in the mass critical and supercritical regime},
%Trans. Amer. Math. Soc., {\bf372}(3)(2019), 2167-2212.



\bibitem{Cao-Dai2019}D. Cao and W. Dai,
\textit{Classification of nonnegative solutions to a bi-harmonic equation with Hartree type nonlinearity},
Proc. Roy. Soc. Edinburgh Sect. A, {\bf 149}(2019), 979-994.



\bibitem{Chen-chen2023} J. Chen and Z. Chen,
\textit{Multiple normalized solutions for biharmonic Choquard equation with Hardy-Littlewood-Sobolev upper critical and combined nonlinearities},
J. Geom. Anal., {\bf 33}(2023):371.



\bibitem{Elgart-Schlein2007} A. Elgart and B. Schlein,
\textit{Mean field dynamics of Boson stars},
Comm. Pure Appl. Math., {\bf 60}(2007), 500-545.



\bibitem{Fibich-Ilan-Papanicolaou2002} G. Fibich, B. Ilan and and G. Papanicolaou,
\textit{Self-focusing with fourth-order dispersion},
SIAM J. Appl. Math., {\bf 62}(2002), 1437-1462.



\bibitem{Fibich-Ilan-Schochet2003} G. Fibich, B. Ilan and and S. Schochet,
\textit{Critical exponent and collapse of nonlinear Schr\"{o}dinger equations with anisotropic fourth-order dispersion},
Nonlinearity, {\bf 16}(2003), 1809-1821.



\bibitem{Gao2013} Y. Gao,
\textit{Blow-up for the focusing $\dot{H}^\frac{1}{2}$-critical Hartree equation with radial data},
J. Differential Equations., {\bf 255}(2013), 2801-2825.



\bibitem{Gao-Wang2014} Y. Gao and Z. Wang,
\textit{Scattering versus blow-up for the focusing $L^2$ supercritical Hartree equation},
Z. Angew. Math. Phys., {\bf 65}(2014), 179-202.


\bibitem{Ivanov-Kosevich1983} B. Ivanov and A. Kosevich,
\textit{Stable three-dimensional small-amplitude soliton in magnetic materials},
Sov. J. Low Temp. Phys., {\bf9}(1983), 439-442.


\bibitem{Jeanjean1997}L. Jeanjean,
\textit{Existence of solutions with prescribed norm for semilinear elliptic equations},
Nonlinear Anal., {\bf 28}(1997), 1633-1659.



\bibitem{Karpman1996}V. Karpman,
\textit{Stabilization of soliton instabilities by higher-order dispersion: Fourth-order nonlinear Schr\"{o}dinger-type equations},
Phys. Rev. E., {\bf53}(2)(1996), 1336-1339.


\bibitem{Karpman-Shagalov2000}V. Karpman and A. Shagalov,
\textit{Stability of soliton described by nonlinear Schr\"{o}dinger-type equations with higher-order dispersion},
Phys. D., {\bf144}(1-2)(2000), 194-210.


\bibitem{Kenig-Merle2008} C. E. Kenig and F. Merle,
\textit{Global well-posedness, scattering and blow-up for the energy-critical focusing non-linear wave equation},
Acta Math., {\bf 201}(2008), 147-212.



%\bibitem{Li-Zou2023} Q. Li and W. Zou
%\textit{Normalized ground states for Sobolev critical nonlinear Schr\"{o}dinger equation in the $L^2$-supercritical case},
%Discrete Contin. Dyn. Syst., doi:10.3934/dcds.2023101.
%
%
%
%\bibitem{Li2021} X. Li,
%\textit{Existence of normalized ground states for the Sobolev critical Schr\"{o}dinger equation with combined nonlinearities},
%Calc. Var. Partial Differential Equations., {\bf 60}(2021), 169.




\bibitem{Lieb-Loss2001}E. Lieb and M. Loss, Analysis, 2nd edition, volume 14 of graduate studies in mathematics, American Mathematical Society, Providence, RI, 2001.



\bibitem{Lieb-Yau1987} E. Lieb and H. Yau,
\textit{The Chandrasekhar theory of stellar collapse as the limit of quantum mechanics},
Comm. Math. Phys., {\bf 112}(1987), 147-174.



\bibitem{Liu-Zhang2023} J. Liu and Z. Zhang,
\textit{Normalized solutions to biharmonic Schr\"{o}dinger equation with critical growth in $\mathbb R^N$},
Comput. Appl. Math., {\bf 42}(2023), 276.


\bibitem{Ma-Chang-2022}Z. Ma and X. Chang,
\textit{Normalized ground states of nonlinear biharmonic Schr\"{o}dinger equations with Sobolev critical growth and combined nonlinearities},
Appl. Math. Lett., 22(2022), 108388


\bibitem{Miao-Xu-Zhao2008} C. Miao, G. Xu and L. Zhao,
\textit{The Cauchy problem of the Hartree equation},
J. Partial Differ. Equ., {\bf 21}(2008), 22-44.



\bibitem{Miao-Xu-Zhao2009}C. Miao, G. Xu and L. Zhao,
\textit{Global well-posedness and scattering for the focusing energy-critical nonlinear Schr\"{o}dinger equations of fourth order in the radial case},
J. Differential Equations, {\bf246}(9)(2009), 3715-3749.



\bibitem{Nirenberg1959}L. Nirenberg,
%\textit{On elliptic partial differential equations},
%Ann. Scuola Norm. Sup. Pisa (3), {\bf 13}(1959), 115-162.
On Elliptic Partial Differential Equations. In: Faedo, S. (eds) Il principio di minimo e sue applicazioni alle equazioni funzionali. C.I.M.E. Summer Schools, vol 17. Springer, Berlin, Heidelberg.


\bibitem{Pausader2007} B. Pausader,
\textit{Global well-posedness for energy critical fourth-order Schr\"{o}dinger equations in the radial case},
Dyn. Partial Differ. Equ., {\bf 4}(2007), 197-225.



\bibitem{Pausader2009}B. Pausader,
\textit{The cubic fourth-order Schr\"{o}dinger equation},
J. Funct. Anal., {\bf256}(8)(2009), 2473-2517.



\bibitem{Phan2018}T. Phan,
\textit{Blowup for biharmonic Schr\"{o}dinger equation with critical nonlinearity},
 Z. Angew. Math. Phys., {\bf 69}(31)(2018), 1-11.



\bibitem{Rani-Goyal2022}A. Rani and S. Goyal,
\textit{Multiple solutions for biharmonic critical Choquard equation involving sign-changing weight functions},
Topol. Methods Nonlinear Anal., {\bf 59}(2022), 221-260.



\bibitem{Riesz1949}M. Riesz,
\textit{Lintgrale de Riemann-Liouville et le problme de Cauchy},
Acta Math., {\bf81}(1949), 1-223.



\bibitem{Saanouni2020}T. Saanouni,
\textit{Non-linear bi-harmonic Choquard equations},
Commun. Pure Appl. Anal., {\bf19}(2020), 5033-5057.



%\bibitem{Soave2020}N. Soave,
%\textit{Normalized ground states for the NLS equation with combined nonlinearities: The Sobolev critical case},
%J. Funct. Anal., {\bf279}(2020), 108610.



\bibitem{Turitsyn1985} S. Turitsyn,
\textit{Three-dimensional dispersion of nonlinearity and stability of multidimensional solitons},
Teoret. Mat. Fiz., {\bf64}(2)(1985), 226-232.



\bibitem{Willem1996} M. Willem, Minimax Theorems, Birkh\"{a}user, Boston, (1996).



\bibitem{Yu-Gaididei-Rasmussen-Christiansen1995}Yu. Gaididei, K. Rasmussen and P. Christiansen,
\textit{Nonlinear excitations in two-dimensional molecular structures with impureties},
Phys. Rev. E, {\bf52}(1995), 2951-2962.



\bibitem{Yonggeun-Gyeongha-Soonsik-Sanghyuk2015}C. Yonggeun, H. Gyeongha, K. Soonsik and L. Sanghyuk,
\textit{Well-posedness and ill-posedness for the cubic fractional Schr\"{o}dinger equations},
Discrete Contin. Dyn. Syst., {\bf35}(2015), 2863-2880.



\bibitem{Zhang-Zheng2010}J. Zhang and J. Zheng,
\textit{Energy critical fourth-order Schr\"{o}dinger equation with subcritical perturbations},
Nonlinear Anal., {\bf73}(4)(2010), 1004-1014.

%\bibitem{Zou-Schechter2006} W. Zou and M. Schechter,
%Critical point theory and its applications, Springer New York, NY, (2006).


%\bibitem{Bellazzini-Siciliano2011} J. Bellazzini and G. Siciliano,
%\textit{Stable standing waves for a class of nonlinear Schr\"{o}dinger-Poisson equations},
%Z. Angew. Math. Phys., {\bf 62}(2011), 267-280.
%
%
%\bibitem{Bellazzini-Visciglia2010} J. Bellazzini and N. Visciglia,
%\textit{On the Orbital Stability for a Class of Nonautonomous NLS},
%Indiana Univ. Math. J., {\bf 59}(2010), 1211-1230.
%

%
%
%\bibitem{Bonheure-Casteras-Gou-Jeanjean2019}D. Bonheure, J. Casteras, T. Gou and L. Jeanjean,
%\textit{ Normalized solutions to the mixed dispersion nonlinear Schr\"{o}dinger equation in the mass critical and supercritical regime},
%Trans. Amer. Math. Soc., {\bf 372}(2019), 2167-2212.
%
%
%\bibitem{Bonheure-Casteras-Santos-Nascimento2018}D. Bonheure, J. Casteras, E. Santos and R. Nascimento,
%\textit{ Orbitally stable standing waves of a mixed dispersion nonlinear Schr\"{o}dinger equation},
%SIAM J. Math. Anal., {\bf 50}(2018), 5027-5071.
%
%
%
%
%
%\bibitem{Boussaid-Fernandez-Jeanjean2019}N. Boussaid, A. Fernandez and L. Jeanjean,
%\textit{Some remarks on a minimization problem associated to a fourth order nonlinear Schr\"{o}dinger equation},
%arXiv preprint, (2019), arXiv.1910.13177.
%
%
%
%\bibitem{Chang-Hajaiej-Ma-Song2023}X. Chang, H. Hajaiej, Z. Ma and L. Song,
%\textit{Existence and instability of standing waves for the biharmonic nonlinear Schr\"{o}dinger equation with combined nonlinearities},
%arXiv preprint, (2023), arXiv.2305.00327v1.
%
%
%

%
%
%
%\bibitem{Fernandez-Jeanjean-Mandel-Maris2022}A. Fernandez, L. Jeanjean, R. Mandel and M. Maris,
%\textit{ Non-homogeneous Gagliardo-Nirenberg inequalities in $\mathbb{R}^N$ and application to a biharmonic non-linear Schr\"{o}dinger equation},
%J. Differential Equations, {\bf 330}(2022), 1-65.
%
%
%\bibitem{Fibich-Ilan-Papaniclaou2002}G. Fibich, B. Ilan and G. Papaniclaou,
%\textit{Self-focusing fourth order dispersion},
%SIAM J. Appl. Math., {\bf 62}(2002), 1437-1462.
%
%
%\bibitem{Fukumoto-Mofatt-2000}Y. Fukumoto and H. Mofatt,
%\textit{Motion and expansion of a viscous vortex ring: I. A higher-order asymptotic formula for the velocity},
%J. Fluid Mech., {\bf 417} (2000), 1�5.
%
%
%\bibitem{Gilbarg-Trudinger2001} D. Gilbarg and N. Trudinger,
%Elliptic Partial Differential Equations of Second Order, second edition, vol. {\bf 224}, Springer-Verlag, Berlin, (2001).
%
%
%
%

%
%
%
%
%\bibitem{Karpman1996}V. Karpman,
%\textit{Stabilization of soliton instabilities by higher-order dispersion: Fourth-order nonlinear Schr\"{o}dinger-type equations},
%Phys. Rev. E., {\bf 53}(2)(1996), 1336-1339.
%
%
%\bibitem{Karpman-Shagalov2000}V. Karpman and A. Shagalov,
%\textit{Stability of soliton described by nonlinear Schr\"{o}dinger-type equations with higher-order dispersion},
%Phys. D., {\bf 144}(1-2)(2000), 194-210
%
%
%
%
%
%\bibitem{Luo-Yang2023}X. Luo and T. Yang,
%\textit{Normalized solutions for a fourth-order Schr\"{o}dinger equation with a positive second-order dispersion coefficient},
%Sci. China Math., {\bf 66}(2023), 1237-1262.
%
%
%\bibitem{Luo-Zhang2022}H. Luo and Z. Zhang,
%\textit{Existence and stability of normalized solutions to the mixed dispersion nonlinear Schr\"{o}dinger equations},
%Electronic Research Archive, {\bf 30}(8)(2022), 2871-2898.
%
%
%\bibitem{Luo-Zheng-zhu2023}T. Luo, S. Zheng and S. Zhu,
%\textit{The Existence and stability of normalized solutions for a bi-harmonic nonlinear Schr\"{o}dinger equation with mixed dispersion},
%Acta Math Sci., {\bf 43}(2023), 539-563.
%
%
%\bibitem{Ma-Chang2022} Z. Ma and X. Chang
%\textit{Normalized ground states of nonlinear biharmonic Schr\"{o}dinger equations with Sobolev critical growth and combined nonlinearities},
%Appl. Math. Lett., {\bf 135}(2023), 108388.
%
%
%\bibitem{Miao-Xu-Zhao2009}C. Miao, G. Xu and L. Zhao,
%\textit{Global well-posedness and scattering for the focusing energy-critical nonlinear Schr\"{o}dinger equations of fourth order in the radial case},
%J. Differential Equations, {\bf 246}(9)(2009), 3715-3749.
%
%

%
%
%\bibitem{Pausader2009}B. Pausader,
%\textit{The cubic fourth-order Schr\"{o}dinger equation},
%J. Funct. Anal., {\bf 256}(8)(2009), 2473-2517.
%
%
%\bibitem{Phan2018}T. Phan,
%\textit{Blowup for biharmonic Schr\"{o}dinger equation with critical nonlinearity},
% Z. Angew. Math. Phys., {\bf 69}(31)(2018), 1-11.
%
%
%

%
%
%
%
%
%
%\bibitem{Zhang-Zheng2010}J. Zhang and J. Zheng,
%\textit{Energy critical fourth-order Schr\"{o}dinger equation with subcritical perturbations},
%Nonlinear Anal., {\bf 73}(4)(2010), 1004-1014.
%
%

\end{thebibliography}

\end{document}